\numberwithin{equation}{section}
\theoremstyle{plain}
\newtheorem{theorem}{Theorem}[section]
\newtheorem{lemma}[theorem]{Lemma}
\theoremstyle{definition}
\theoremstyle{remark}
\renewcommand{\bar}{\overline}
\newcommand{\inprod}[1]{\langle#1\rangle}
\newcommand{\abs}[1]{\lvert#1\rvert}
\newcommand{\N}{\mathbb{N}}
\newcommand{\Z}{\mathbb{Z}}
\newcommand{\Q}{\mathbb{Q}}
\newcommand{\C}{\mathbb{C}}
\tikzstyle{vertex}=[circle, draw, fill=black, inner sep=0pt, minimum width=6pt]
\tikzstyle{vert}=[circle, draw=black, fill=white, inner sep=0pt, minimum width=6pt]
\tikzstyle{zc}=[circle, draw, fill=black, inner sep=0pt, minimum width=6pt]
\tikzstyle{pc}=[circle, draw=black, inner sep=1pt, minimum width=10pt, font=\tiny] 
\tikzstyle{nc}=[circle, draw=black, inner sep=1pt, minimum width=10pt, font=\tiny] 
\tikzstyle{pedge}=[draw,-]
\tikzstyle{wedge}=[draw,-,postaction={decorate}, decoration={markings,mark = at position 0.55 with {\arrow{stealth} } }]
\tikzstyle{dpedge}=[draw,-,postaction={decorate}]
\tikzstyle{wwedge}=[draw,-,postaction={decorate}, decoration={markings,mark = at position 0.5 with {\arrow{stealth} }, mark = at position 0.6 with {\arrow{stealth} } }]
\tikzstyle{wwedge2}=[draw,-,postaction={decorate}, decoration={markings,mark = at position 0.45 with {\arrow{stealth} }, mark = at position 0.65 with {\arrow{stealth} } }]
\tikzstyle{wwwedge}=[draw,-,postaction={decorate}, decoration={markings,mark = at position 0.65 with {\arrow{stealth} },mark = at position 0.45 with {\arrow{stealth} }, mark = at position 0.55 with {\arrow{stealth} } }]
\tikzstyle{wwwedge2}=[draw,-,postaction={decorate}, decoration={markings,mark = at position 0.7 with {\arrow{stealth} },mark = at position 0.4 with {\arrow{stealth} }, mark = at position 0.55 with {\arrow{stealth} } }]
\tikzstyle{wwwwedge}=[draw,-,postaction={decorate}, decoration={markings,mark = at position 0.7 with {\arrow{stealth} },mark = at position 0.4 with {\arrow{stealth} }, mark = at position 0.5 with {\arrow{stealth} }, mark = at position 0.6 with {\arrow{stealth} } }]
\tikzstyle{wwwnedge}=[draw,densely dashed,postaction={decorate}, decoration={markings,mark = at position 0.65 with {\arrow{stealth} },mark = at position 0.45 with {\arrow{stealth} }, mark = at position 0.55 with {\arrow{stealth} } }]
\tikzstyle{wwnedge}=[draw,densely dashed,postaction={decorate}, decoration={markings,mark = at position 0.5 with {\arrow{stealth} }, mark = at position 0.6 with {\arrow{stealth} } }]
\tikzstyle{wnedge}=[draw,densely dashed,postaction={decorate}, decoration={markings,mark = at position 0.55 with {\arrow{stealth} } }]
\tikzstyle{wnedge2}=[draw,densely dashed,postaction={decorate}, decoration={markings,mark = at position 0.6 with {\arrow{stealth} } }]
\tikzstyle{dnedge}=[draw,densely dashed,postaction={decorate}]
\tikzstyle{nedge}=[draw,densely dashed]
\tikzstyle{weight2}= [draw=white, fill=white, font=\scriptsize]
\tikzstyle{weight}= [font=\scriptsize]
\tikzstyle{empty}=[circle, draw=white, inner sep=2pt, fill=white, minimum width=4pt]
\tikzstyle{ghost}=[circle, draw=black, inner sep=1pt, style=densely dashed, minimum width=6pt, font=\tiny]
\tikzstyle{ghostc}=[circle, draw=black, inner sep=1pt, style=densely dashed, minimum width=10pt, font=\tiny]
\tikzstyle{dedge}=[draw,very thick,dotted]
\begin{document}
	\begin{frontmatter}
		\title{ Cyclotomic matrices over the Eisenstein and Gaussian integers }
		\author{ Gary Greaves }
		\address{Mathematics Department, Royal Holloway, Egham, Surrey, TW20 0EX, UK.}
		\ead{g.greaves@rhul.ac.uk}

		\begin{abstract}
				We classify all cyclotomic matrices over the Eisenstein and Gaussian integers, that is, all Hermitian matrices over the Eisenstein and Gaussian integers that have all their eigenvalues in the interval $[-2,2]$.
		\end{abstract}

		\begin{keyword}
			Cyclotomic matrices, Eisenstein integers, Gaussian integers. Mathematics Subject Classification: 05C22, 05C50, 11C20, 15B33.
		\end{keyword}
	\end{frontmatter}
		
	\section{Introduction}
	
	Define a \textbf{cyclotomic} matrix to be a Hermitian matrix $A$ with integral characteristic polynomial $\chi_A(x) = \det(xI - A)$ whose zeros are contained in the interval $[-2,2]$.
	For Hermitian matrices $A$ over an imaginary quadratic ring $\mathcal O_{\Q(\sqrt{d})}$, the integrality of the characteristic polynomial is automatic.
	The nontrivial Galois automorphism $\sigma$ of $\Q(\sqrt{d})$ (with $d \in \Z_-$) over $\Q$ is simply complex conjugation.
	Applying $\sigma$ to the coefficients of $\chi_A$ gives $\sigma(\chi_A(x)) = \det(xI - \sigma(A)) = \det(xI - A^\top) = \chi_A(x)$.
	Hence, the coefficients of $\chi_A$ are rational and, since they are also algebraic integers, they must be in $\Z$.
	Cyclotomic matrices over the integers and over the imaginary quadratic integer rings $\mathcal O_{\Q(\sqrt{d})}$ for $d \ne -1$ and $d \ne -3$ have been classified by McKee and Smyth \cite{McKee:IntSymCyc07} and Taylor \cite{GTay:cyclos10} respectively.
	Therefore the completion of the classification of cyclotomic matrices over imaginary quadratic integer rings reduces to classifying cyclotomic matrices over the Gaussian integers $\Z[i]$ and over the Eisenstein integers $\Z[\omega]$, where $\omega = 1/2 + \sqrt{-3}/2$.
	
	Define the \textbf{Mahler measure} \cite{Mahler:Measure1962} of a monic polynomial $f$ as 
	\[
	M(f) = \prod_{j=1}^d \max(1,\abs{\alpha_j}),	
	\]
	where $f(x)$ has factorisation $f(x) = (x-\alpha_1)\dots(x-\alpha_d) \in \C[x]$.
	The Mahler measure of an $n \times n$ Hermitian matrix $A$ is defined as $M(A) = M(z^n \chi_A(z + 1/z))$.
	(This definition was used earlier by McKee and Smyth \cite{McKee:Salem05}.)
	By a theorem of Kronecker~\cite{Kron:cyclo57}, every eigenvalue of a cyclotomic matrix $A$ can be written as $\mu+1/\mu$, for some root of unity $\mu$.
	Therefore, for a cyclotomic matrix $A$, the polynomial $z^n \chi_A(z+1/z)$ is a product of cyclotomic polynomials.
	Moreover, the Mahler measure of a cyclotomic matrix is equal to $1$.
	In 1933, Lehmer~\cite{Lehmer:33Cyclo} asked whether or not there exists a monic integer polynomial $f$, having Mahler measure $1 < M(f) < \Omega$, where $\Omega = 1.176280818\dots$ is the larger real zero of the polynomial
	\[
		z^{10} + z^9 -z^7 - z^6 - z^5 - z^4 - z^3 + z + 1.
	\]
	
	The conjecture, that the answer is negative is called Lehmer's conjecture; for a discussion of its history see Smyth \cite{Smyth:MMsurvey08}.
	Lehmer's number $\Omega$ is also the smallest \emph{known} Salem number.
	Let $S$ and $T$ denote the set of Pisot numbers and the set of Salem numbers respectively.
	That the set $S$ is the set of limit points of $T$ and furthermore that $S \cup T$ is closed is a conjecture of Boyd~\cite{Boyd:Small77}.
	McKee and Smyth~\cite{McKee:Salem05,McKee:noncycISM09} used cyclotomic matrices to prove partial results in support of Lehmer's conjecture and Boyd's conjecture.
	
	Estes and Guralnick~\cite{EsGu:MinPolISM} conjectured that any totally real separable monic integer polynomial can occur as the minimal polynomial of an integer symmetric matrix.
	Infinitely many counterexamples to this conjecture were found by Dobrowolski~\cite{Dob:Int08}, the smallest of these having degree $2880$.
	McKee~\cite{McKee:SmallSpan10} used cyclotomic matrices to find counterexamples of small degree, the smallest having degree $6$.
	
	The motivation for our current work is to complete the classification of cyclotomic matrices over imaginary quadratic integer rings and in doing so, enable the study of broader classes of integer polynomials against both Lehmer's conjecture and Boyd's conjecture using ideas similar to those used in the citations \cite{McKee:Salem05,McKee:noncycISM09} above.
	
	The paper is organised as follows.
	In Section~\ref{sec:MaG} we explain a convenient way of picturing matrices and introduce the notions of equivalence and maximality.
	We state our results (the classification) in Section~\ref{sec:results} and set up the necessary machinery for the proofs in Section~\ref{sec:exGr}.
	The results are proved in Sections~\ref{sec:growZi}, \ref{sec:proof2}, \ref{sec:growZicharge}, and \ref{sec:the_eisenstein_integers}.
	In the final section we indicate the extra work required for the proofs to be extended to work for any quadratic integer ring.
	
	\section{Preliminaries}
	\label{sec:MaG}
	
	\subsection{Viewing matrices as graphs} 
	\label{sub:viewing_matrices_as_graphs}
	

	In our current setting, it is natural to think of Hermitian matrices as the adjacency matrices of directed weighted graphs.
	Let $S$ be a subset of $\C$.
	For an element $x \in \C$ we write $\bar x$ for the complex conjugate of $x$.
	An \textbf{$S$-graph} $G$ is a directed weighted graph $(G,w)$ whose weight function $w$ maps pairs of vertices to elements of $S$ and satisfies $w(u,v) = \bar w(v,u)$ for all vertices $u,v \in V(G)$.
	The adjacency matrix $A = (a_{uv})$ of $G$ is given by $a_{uv} = w(u,v)$.
	For every vertex $v$, the \textbf{charge} of $v$ is just the number $w(v,v)$.
	A vertex with nonzero charge is called \textbf{charged}, those with zero charge are called \textbf{uncharged}.
	By simply saying ``$G$ is a graph,'' we mean that $G$ is a $T$-graph where $T$ is some unspecified subset of the complex numbers.

	Now we outline our graph drawing conventions.
	We are interested in $S$-graphs where $S = \Z[i]$ and $S = \Z[\omega]$.
	Edges are drawn in accordance with Tables~\ref{tab:ziedge}~and~\ref{tab:zwedge}.
	It will become clear later why the edges in these tables are sufficient for our purposes.
	For edges with a real edge-weight, the direction of the edge does not matter, and so to reduce clutter, we do not draw arrows for these edges. 
	For all other edges, the number of arrowheads reflects the norm of the edge-weight.
	\begin{table}[htbp]
		\begin{center}
		\begin{tabular}{c | c}
			Edge-weight & Visual representation \\
			\hline
			$1$ & \tikz { \path[pedge] (0,0) -- (1,0); } \\
			$-1$ & \tikz { \path[nedge] (0,0) -- (1,0); } \\
			$i$ & \tikz { \path[wedge] (0,0) -- (1,0); } \\
			$-i$ & \tikz { \path[wnedge] (0,0) -- (1,0); } \\
			$1 + i$ & \tikz { \path[wwedge] (0,0) -- (1,0); } \\
			$-1 - i$ & \tikz { \path[wwnedge] (0,0) -- (1,0); } \\
			$2$ & \tikz { \path[pedge] (0,0) -- node[weight2] {$2$} (1,0); }
		\end{tabular}
		\end{center}
		\caption{Edge drawing convention for $\Z[i]$-graphs.}
		\label{tab:ziedge}
	\end{table}
	\begin{table}[htbp]
		\begin{center}
		\begin{tabular}{c | c}
			Edge-weight & Visual representation \\
			\hline
			$1$ & \tikz { \path[pedge] (0,0) -- (1,0); } \\
			$-1$ & \tikz { \path[nedge] (0,0) -- (1,0); } \\
			$\omega$ & \tikz { \path[wedge] (0,0) -- (1,0); } \\
			$-\omega$ & \tikz { \path[wnedge] (0,0) -- (1,0); } \\
			$1 + \omega$ & \tikz { \path[wwwedge] (0,0) -- (1,0); } \\
			$-1 - \omega$ & \tikz { \path[wwwnedge] (0,0) -- (1,0); } \\
			$2$ & \tikz { \path[pedge] (0,0) -- node[weight2] {$2$} (1,0); }
		\end{tabular}
		\end{center}
		\caption{Edge drawing convention for $\Z[\omega]$-graphs.}
		\label{tab:zwedge}
	\end{table}
	
	A vertex with charge $1$ is drawn as \tikz {\node[pc] {$+$};} and a vertex with charge $-1$ is drawn as \tikz {\node[nc] {$-$};}.
	For charge $2$ we draw \tikz {\node[nc] {$2$};}.
	And if a vertex is uncharged, we simply draw \tikz {\node[zc] {};}.
	By a \textbf{subgraph} $H$ of $G$ we mean an induced subgraph; a subgraph obtained by deleting vertices and their incident edges.
	We say that $G$ \textbf{contains} $H$ and that $G$ is a \textbf{supergraph} of $H$.
	The notions of a cycle/path/triangle etc.\ carry through in an obvious way from those of an undirected unweighted graph.
	A graph is called \textbf{charged} if it contains at least one charged vertex, otherwise it is called \textbf{uncharged}.
	We will interchangeably speak of both graphs and their adjacency matrices.
	
	\subsection{Equivalence and switching}
	
	Let $K = \Q(\sqrt{d})$ where $d \in \Z_-$ and let $R$ be a subring of $\mathcal O_K$, the ring of integers of $K$.
	We write $M_n(R)$ to denote the ring of $n \times n$ matrices over $R$. 
	Let $U_n(R)$ denote the group of unitary matrices $Q \in M_n(R)$ which satisfy $QQ^* = Q^*Q = I$, where $Q^*$ denotes the Hermitian transpose of $Q$.
	Conjugation of a matrix $M \in M_n(R)$ by a matrix in $U_n(R)$ preserves the eigenvalues of $M$ and the base ring $R$.
	Now, $U_n(R)$ is generated by permutation matrices and diagonal matrices of the form
	\[
		\operatorname{diag}(1,\dots,1,u,1,\dots,1),
	\]
	where $u$ is a unit in $R$.
	Let $D$ be such a diagonal matrix having $u$ in the $j$-th position.
	Conjugation by $D$ is called a $u$-\textbf{switching} at vertex $j$.
	This has the effect of multiplying all the out-neighbour edge-weights of $j$ by $u$ and all the in-neighbour edge-weights of $j$ by $\bar u$.
	The effect of conjugation by permutation matrices is just a relabelling of the vertices of the corresponding graph.
	
	We have seen above that for all $A \in M_n(R)$, the characteristic polynomial $\chi_A$ has integer coefficients.
	Observe that, since they are integers, the coefficients of the characteristic polynomials of such matrices are invariant under the action of automorphisms from the Galois group $\operatorname{Gal}(K/\Q)$ of $K$ over $\Q$.
	Let $A$ and $B$ be two matrices in $M_n(R)$.
	We say that $A$ is \textbf{strongly equivalent} to $B$ if $A = \sigma(QBQ^*)$ for some $Q \in U_n(R)$ and some $\sigma \in \operatorname{Gal}(K/\Q)$, where $\sigma$ is applied componentwise to $QBQ^*$.
	The matrices $A$ and $B$ are merely called \textbf{equivalent} if $A$ is strongly equivalent to $\pm B$.
	The notions of equivalence and strong equivalence carry through to graphs in the natural way and, since all possible labellings of a graph are strongly equivalent, we do not need to label the vertices, i.e., we do not need to assign an order to the vertices.
	
	Let $S$ be a subset of $\mathcal O_K$.
	When working with $S$-matrices and $S$-graphs, for our notion of equivalence, we take $R$ to be the ring generated by the elements of $S$.
	
	
	\subsection{Interlacing and maximal indecomposable cyclotomic matrices}

	We use repeatedly the following theorem of Cauchy \cite{Cau:Interlace,Fisk:Interlace05, Hwang:Interlace04}.

	\begin{theorem}[Interlacing] \label{thm:interlacing} Let $A$ be an $n~\times~n$ Hermitian matrix having eigenvalues $\lambda_1 \leqslant \dots \leqslant \lambda_n$. Let $B$ be an $(n - 1)~\times~(n - 1)$ principal submatrix of $A$ having eigenvalues $\mu_1 \leqslant \dots \leqslant \mu_{n-1}$. Then the eigenvalues of $A$ and $B$ interlace. Namely,
		\[
			\lambda_1 \leqslant \mu_1 \leqslant \lambda_2 \leqslant \mu_2 \leqslant \dots \leqslant \mu_{n-1} \leqslant \lambda_n.
		\]
	\end{theorem}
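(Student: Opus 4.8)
The final statement is the Interlacing Theorem (Cauchy), which is a classical result. The plan is to give the standard proof via the Courant–Fischer min-max characterisation of eigenvalues, since that furnishes the interlacing inequalities directly and cleanly.

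\medskip

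\noindent\textbf{Proof proposal.}
First I would recall the Courant--Fischer variational characterisation: for an $n \times n$ Hermitian matrix $A$ with eigenvalues $\lambda_1 \leqslant \dots \leqslant \lambda_n$, one has
\[
	\lambda_k = \min_{\substack{W \subseteq \C^n \\ \dim W = k}} \ \max_{\substack{x \in W \\ x \ne 0}} \frac{x^* A x}{x^* x}
	= \max_{\substack{W \subseteq \C^n \\ \dim W = n-k+1}} \ \min_{\substack{x \in W \\ x \ne 0}} \frac{x^* A x}{x^* x}.
\]
This is standard and may be quoted. Next, view the $(n-1) \times (n-1)$ principal submatrix $B$ as $B = P^* A P$, where $P$ is the $n \times (n-1)$ matrix consisting of the appropriate $n-1$ standard basis columns of the identity (after a permutation, which does not affect eigenvalues, we may take $B$ to sit in the top-left corner, so $P = \bigl[\begin{smallmatrix} I_{n-1} \\ 0 \end{smallmatrix}\bigr]$). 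Then for any $x \in \C^{n-1}$ we have $x^* B x = (Px)^* A (Px)$ and $x^* x = (Px)^*(Px)$, so the Rayleigh quotient of $B$ at $x$ equals that of $A$ at $Px$, and $P$ maps $k$-dimensional subspaces of $\C^{n-1}$ injectively onto $k$-dimensional subspaces of $\C^n$ that lie in the hyperplane $\operatorname{im} P$.

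\medskip

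To get $\mu_k \leqslant \lambda_{k+1}$: use the $\min\text{-}\max$ form for $\mu_k$. Given any $(k+1)$-dimensional subspace $W \subseteq \C^n$ attaining (or nearly attaining) $\lambda_{k+1}$, the intersection $W \cap \operatorname{im} P$ has dimension at least $k$ (since $\operatorname{im} P$ has codimension $1$); pick a $k$-dimensional subspace $W' \subseteq W \cap \operatorname{im} P$ and let $V' = P^{-1}(W')$, a $k$-dimensional subspace of $\C^{n-1}$. Then
\[
	\mu_k \leqslant \max_{0 \ne x \in V'} \frac{x^* B x}{x^* x} = \max_{0 \ne y \in W'} \frac{y^* A y}{y^* y} \leqslant \max_{0 \ne y \in W} \frac{y^* A y}{y^* y},
\]
and taking the infimum over admissible $W$ gives $\mu_k \leqslant \lambda_{k+1}$. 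Symmetrically, to get $\lambda_k \leqslant \mu_k$: use the $\max\text{-}\min$ form for $\mu_k$ with $(n-1)-k+1 = n-k$ dimensional subspaces of $\C^{n-1}$; pushing such a subspace forward by $P$ yields an $(n-k)$-dimensional subspace of $\C^n$, which is admissible in the $\max\text{-}\min$ characterisation of $\lambda_k$, and the Rayleigh quotients match, so $\lambda_k \leqslant \mu_k$. Chaining $\lambda_k \leqslant \mu_k \leqslant \lambda_{k+1}$ for $k = 1, \dots, n-1$ gives the full interlacing chain $\lambda_1 \leqslant \mu_1 \leqslant \lambda_2 \leqslant \dots \leqslant \mu_{n-1} \leqslant \lambda_n$.

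\medskip

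\noindent\textbf{Main obstacle.} There is no serious obstacle — this is a textbook result. The only point requiring a little care is the dimension count $\dim(W \cap \operatorname{im} P) \geqslant \dim W - 1$ and making sure the subspace dimensions line up correctly in each of the two inequalities (it is easy to be off by one). An alternative, if one prefers to avoid min-max, is a proof via the rank-one perturbation / eigenvalue continuity argument (bordering $B$ by one row and column and tracking sign changes of $\det(xI - A)$ relative to $\det(xI - B)$), but the Courant--Fischer route is shortest and I would go with that. Given that the theorem is quoted with references, I would in fact keep the write-up brief or simply cite \cite{Cau:Interlace,Fisk:Interlace05,Hwang:Interlace04} for the proof.
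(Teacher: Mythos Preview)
Your proof is correct and entirely standard. Note, however, that the paper does not prove this theorem at all: it simply states it with the attribution ``a theorem of Cauchy'' and cites \cite{Cau:Interlace,Fisk:Interlace05,Hwang:Interlace04}. Your own final remark anticipates exactly this, and that is indeed what the paper does --- the Interlacing Theorem is used as a black box throughout.
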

	
	A matrix that is equivalent to a block diagonal matrix of more than one block is called \textbf{decomposable}, otherwise it is called \textbf{indecomposable}.
	A matrix is indecomposable if and only if its underlying graph is connected.
	The eigenvalues of a decomposable matrix are found by pooling together the eigenvalues of its blocks.
	It is therefore sufficient to restrict our classification of cyclotomic matrices to indecomposable matrices.
	An indecomposable cyclotomic matrix that is not a principal submatrix of any other indecomposable cyclotomic matrix is called a \textbf{maximal} indecomposable cyclotomic matrix.
	The corresponding graph is called a maximal connected cyclotomic graph.
	
	Define the \textbf{degree} of a vertex $v \in V(G)$ as
	\[
		\sum_{u \in V(G)} |w(u,v)|^2.
	\]
	
	\begin{lemma}\label{lem:maxDeg4}
		Let $G$ be a graph with a vertex $v$ of degree $d > 4$.
		Then $G$ is not cyclotomic.
	\end{lemma}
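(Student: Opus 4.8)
The plan is to relate the degree of $v$ to a diagonal entry of the square of the adjacency matrix, and then apply a Rayleigh-quotient bound. Let $A = (a_{uv})$ be the adjacency matrix of $G$, so $a_{uv} = w(u,v)$ and $a_{vu} = \bar a_{uv}$. Write $e_v$ for the standard basis (column) vector supported at $v$. The first step is the identity
\[
	\inprod{A e_v, A e_v} = (A^2)_{vv} = \sum_{u \in V(G)} a_{vu} a_{uv} = \sum_{u \in V(G)} \abs{w(u,v)}^2 = d,
\]
where the charge term $\abs{w(v,v)}^2$ is included in the sum, exactly matching the definition of degree given above.

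The second step is to bound this quantity spectrally. Since $A$ is Hermitian, so is $A^2$, and for the unit vector $e_v$ we have $(A^2)_{vv} = \inprod{e_v, A^2 e_v} \leqslant \lambda_{\max}(A^2)$, the largest eigenvalue of $A^2$. But $A$ is diagonalisable with real eigenvalues $\lambda_1 \leqslant \dots \leqslant \lambda_n$, so the eigenvalues of $A^2$ are exactly $\lambda_1^2, \dots, \lambda_n^2$, whence $\lambda_{\max}(A^2) = \max_j \lambda_j^2$. Combining this with the first step gives $\max_j \lambda_j^2 \geqslant d > 4$, so $A$ has an eigenvalue of absolute value strictly greater than $2$. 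Therefore $G$ is not cyclotomic, which is the claim.

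I do not anticipate a genuine obstacle: the argument is just the identity $(A^2)_{vv} = d$ followed by the bound $(A^2)_{vv} \leqslant \lambda_{\max}(A^2) = \rho(A)^2$. For a reader who prefers to see it through the interlacing theorem (Theorem~\ref{thm:interlacing}), one can instead observe that $v$ together with a small set of its incident edges spans a principal submatrix --- a star $K_{1,5}$, a unit-charged $K_{1,4}$, a short path containing an edge of norm $2$, or an even smaller configuration built from a single edge of norm $\geqslant 5$ --- which already fails to be cyclotomic by a one-line eigenvalue computation, and interlacing then propagates this to $G$. The only mild bookkeeping on that route is to enumerate the finitely many minimal local configurations at $v$ with $\sum_u \abs{w(u,v)}^2 > 4$; since the $A^2$ computation handles all of them at once, I would present that as the proof.
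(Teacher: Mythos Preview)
Your proof is correct and essentially identical to the paper's: both compute $(A^2)_{vv} = d$ and then bound it by $\lambda_{\max}(A^2) = \rho(A)^2$. The only cosmetic difference is that the paper phrases the bound $(A^2)_{vv} \leqslant \lambda_{\max}(A^2)$ as an application of interlacing to the $1\times 1$ principal submatrix $(d)$ of $A^2$, whereas you invoke the Rayleigh quotient directly.
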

	\begin{proof}
		Let $A$ be an adjacency matrix of $G$ with $v$ corresponding to the first row.
		The first entry of the first row of $A^2$ is $d$.
		Therefore, by interlacing, the largest eigenvalue of $A^2$ is at least $d$, and so the largest modulus of the eigenvalues of $A$ is at least $|\sqrt{d}| > 2$.
	\end{proof}
	
	\section{Main Results}
	\label{sec:results}
	
	\subsection{Classification of cyclotomic matrices over $\Z[i]$}
	
	We split up the classification of cyclotomic matrices over $\Z[i]$ into three parts and prove each part separately.
	
	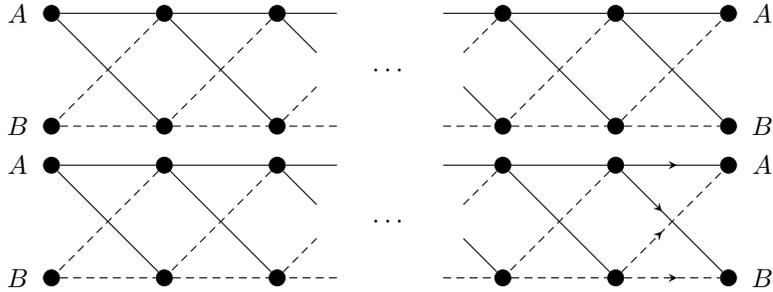
\begin{figure}[htbp]
		\centering
			\begin{tikzpicture}[auto, scale=1.5]
				\begin{scope}
					\foreach \type/\pos/\name in {{vertex/(0,0)/a2}, {vertex/(0,1)/a1}, {vertex/(1,1)/b1}, {vertex/(1,0)/b2}, {vertex/(2,0)/e2}, {vertex/(2,1)/e1}, {empty/(2.6,1)/b11}, {empty/(2.6,0)/b21}, {empty/(2.4,0.6)/b12}, {empty/(2.4,0.4)/b22}, {empty/(3.4,1)/c11}, {empty/(3.4,0)/c21}, {empty/(3.6,0.6)/c12}, {empty/(3.6,0.4)/c22}, {vertex/(4,1)/c1}, {vertex/(4,0)/c2}, {vertex/(5,1)/d1}, {vertex/(5,0)/d2}, {vertex/(6,1)/f1}, {vertex/(6,0)/f2}}
						\node[\type] (\name) at \pos {};
					\foreach \pos/\name in {{(3,0.5)/\dots}, {(-.3,1)/A}, {(-0.3,0)/B}, {(6.3,1)/A}, {(6.3,0)/B}}
						\node at \pos {$\name$};
					\foreach \edgetype/\source/ \dest/\weight in {nedge/b1/a2/{}, pedge/a1/b1/{}, pedge/a1/b2/{}, nedge/a2/b2/{}, nedge/e1/b2/{}, pedge/b1/e1/{}, pedge/b1/e2/{}, nedge/b2/e2/{}, nedge/b21/e2/{}, pedge/e1/b11/{}, pedge/e1/b12/{}, nedge/e2/b22/{}, pedge/c11/c1/{}, nedge/c12/c1/{}, pedge/c22/c2/{}, nedge/c21/c2/{}, nedge/d1/c2/{}, pedge/c1/d1/{}, pedge/c1/d2/{}, nedge/c2/d2/{}, pedge/d1/f1/{}, nedge/d2/f2/{}}
						\path[\edgetype] (\source) -- node[weight] {$\weight$} (\dest);
				\end{scope}
				\begin{scope}
					\foreach \edgetype/\source/ \dest/\weight in {nedge/d2/f1/{}, pedge/d1/f2/{}}
						\path[\edgetype] (\source) -- node[weight] {$\weight$} (\dest);
				\end{scope}
			\end{tikzpicture}
			\begin{tikzpicture}[auto, scale=1.5]
				\begin{scope}
					\foreach \type/\pos/\name in {{vertex/(0,0)/a2}, {vertex/(0,1)/a1}, {vertex/(1,1)/b1}, {vertex/(1,0)/b2}, {vertex/(2,0)/e2}, {vertex/(2,1)/e1}, {empty/(2.6,1)/b11}, {empty/(2.6,0)/b21}, {empty/(2.4,0.6)/b12}, {empty/(2.4,0.4)/b22}, {empty/(3.4,1)/c11}, {empty/(3.4,0)/c21}, {empty/(3.6,0.6)/c12}, {empty/(3.6,0.4)/c22}, {vertex/(4,1)/c1}, {vertex/(4,0)/c2}, {vertex/(5,1)/d1}, {vertex/(5,0)/d2}, {vertex/(6,1)/f1}, {vertex/(6,0)/f2}}
						\node[\type] (\name) at \pos {};
					\foreach \pos/\name in {{(3,0.5)/\dots}, {(-.3,1)/A}, {(-0.3,0)/B}, {(6.3,1)/A}, {(6.3,0)/B}}
						\node at \pos {$\name$};
					\foreach \edgetype/\source/ \dest/\weight in {nedge/b1/a2/{}, pedge/a1/b1/{}, pedge/a1/b2/{}, nedge/a2/b2/{}, nedge/e1/b2/{}, pedge/b1/e1/{}, pedge/b1/e2/{}, nedge/b2/e2/{}, nedge/b21/e2/{}, pedge/e1/b11/{}, pedge/e1/b12/{}, nedge/e2/b22/{}, pedge/c11/c1/{}, nedge/c12/c1/{}, pedge/c22/c2/{}, nedge/c21/c2/{}, nedge/d1/c2/{}, pedge/c1/d1/{}, pedge/c1/d2/{}, nedge/c2/d2/{}, wedge/d1/f1/{}, wnedge/d2/f2/{}}
						\path[\edgetype] (\source) -- node[weight] {$\weight$} (\dest);
				\end{scope}
				\begin{scope}[decoration={markings,mark = at position 0.4 with {\arrow{stealth} } }]
					\foreach \edgetype/\source/ \dest/\weight in {dnedge/d2/f1/{}, dpedge/d1/f2/{}}
						\path[\edgetype] (\source) -- node[weight] {$\weight$} (\dest);
				\end{scope}
			\end{tikzpicture}
		\caption{The infinite families $T_{2k}$ and $T^{(x)}_{2k}$ (respectively) of $2k$-vertex maximal connected cyclotomic $\Z[x]$-graphs, for $k \geqslant 3$ and $x \in \{i, \omega \}$. (The two copies of vertices $A$ and $B$ should be identified to give a toral tessellation.) }
		\label{fig:maxcycs1}
	\end{figure}

	\begin{figure}[htbp]
		\centering
			\begin{tikzpicture}[scale=1.5, auto]
				\begin{scope}
					\foreach \type/\pos/\name in {{vertex/(1,1)/b1}, {vertex/(1,0)/b2}, {vertex/(2,0)/e2}, {vertex/(2,1)/e1}, {empty/(2.6,1)/b11}, {empty/(2.6,0)/b21}, {empty/(2.4,0.6)/b12}, {empty/(2.4,0.4)/b22}, {empty/(3.4,1)/c11}, {empty/(3.4,0)/c21}, {empty/(3.6,0.6)/c12}, {empty/(3.6,0.4)/c22}, {vertex/(4,1)/c1}, {vertex/(4,0)/c2}, {vertex/(5,1)/d1}, {vertex/(5,0)/d2}}
						\node[\type] (\name) at \pos {};
					\foreach \type/\pos/\name in {{vertex/(0,0.5)/bgn}, {vertex/(6,0.5)/end}}
						\node[\type] (\name) at \pos {};
					\foreach \pos/\name in {{(3,0.5)/\dots}}
						\node at \pos {\name};
					\foreach \edgetype/\source/ \dest in {nedge/e1/b2, pedge/b1/e1, pedge/b1/e2, nedge/b2/e2, nedge/b21/e2, pedge/e1/b11, pedge/e1/b12, nedge/e2/b22, pedge/c11/c1, nedge/c12/c1, pedge/c22/c2, nedge/c21/c2, nedge/d1/c2, pedge/c1/d1, pedge/c1/d2, nedge/c2/d2}
						\path[\edgetype] (\source) -- (\dest);
					\foreach \edgetype/\source/\dest in {wwedge/b1/bgn, wwedge/b2/bgn, wwnedge/d2/end, wwedge/d1/end}
						\path[\edgetype] (\source) -- (\dest);
				\end{scope}
			\end{tikzpicture}
		\caption{The infinite family of $2k$-vertex maximal connected cyclotomic $\Z[i]$-graphs $C_{2k}$ for $k \geqslant 2$.}
		\label{fig:maxcycs2}
	\end{figure}

	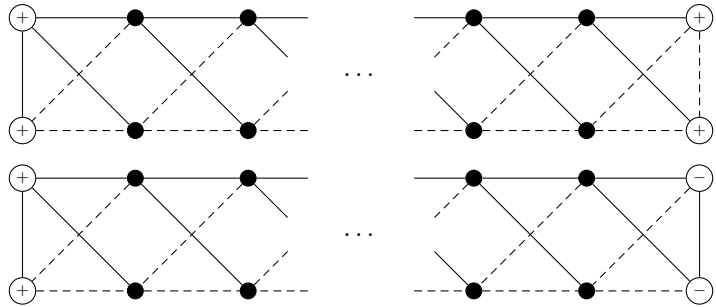
\begin{figure}[htbp]
		\centering
			\begin{tikzpicture}[scale=1.5, auto]
					\foreach \type/\pos/\name in {{vertex/(1,1)/b1}, {vertex/(1,0)/b2}, {vertex/(2,0)/e2}, {vertex/(2,1)/e1}, {empty/(2.6,1)/b11}, {empty/(2.6,0)/b21}, {empty/(2.4,0.6)/b12}, {empty/(2.4,0.4)/b22}, {empty/(3.4,1)/c11}, {empty/(3.4,0)/c21}, {empty/(3.6,0.6)/c12}, {empty/(3.6,0.4)/c22}, {vertex/(4,1)/c1}, {vertex/(4,0)/c2}, {vertex/(5,1)/d1}, {vertex/(5,0)/d2}}
						\node[\type] (\name) at \pos {};
					\foreach \type/\pos/\name in {{pc/(0,0)/a2}, {pc/(0,1)/a1}, {pc/(6,1)/f1}, {pc/(6,0)/f2}}
						\node[\type] (\name) at \pos {$+$};
					\foreach \pos/\name in {{(3,0.5)/\dots}}
						\node at \pos {$\name$};
					\foreach \edgetype/\source/ \dest in {pedge/a1/a2, nedge/b1/a2, pedge/a1/b1, pedge/a1/b2, nedge/a2/b2, nedge/e1/b2, pedge/b1/e1, pedge/b1/e2, nedge/b2/e2, nedge/b21/e2, pedge/e1/b11, pedge/e1/b12, nedge/e2/b22, pedge/c11/c1, nedge/c12/c1, pedge/c22/c2, nedge/c21/c2, nedge/d1/c2, pedge/c1/d1, pedge/c1/d2, nedge/c2/d2, nedge/f1/d2, pedge/d1/f1, pedge/d1/f2, nedge/d2/f2, nedge/f1/f2}
						\path[\edgetype] (\source) -- (\dest);
					\node at (3,-0.2) {};
				\end{tikzpicture}
				\begin{tikzpicture}[scale=1.5, auto]
					\foreach \type/\pos/\name in {{vertex/(1,1)/b1}, {vertex/(1,0)/b2}, {vertex/(2,0)/e2}, {vertex/(2,1)/e1}, {empty/(2.6,1)/b11}, {empty/(2.6,0)/b21}, {empty/(2.4,0.6)/b12}, {empty/(2.4,0.4)/b22}, {empty/(3.4,1)/c11}, {empty/(3.4,0)/c21}, {empty/(3.6,0.6)/c12}, {empty/(3.6,0.4)/c22}, {vertex/(4,1)/c1}, {vertex/(4,0)/c2}, {vertex/(5,1)/d1}, {vertex/(5,0)/d2}}
						\node[\type] (\name) at \pos {};
					\foreach \type/\pos/\name in {{+/(0,0)/a2}, {+/(0,1)/a1}, {-/(6,1)/f1}, {-/(6,0)/f2}}
						\node[pc] (\name) at \pos {$\type$};
					\foreach \pos/\name in {{(3,0.5)/\dots}}
						\node at \pos {$\name$};
					\foreach \edgetype/\source/ \dest in {pedge/a1/a2, nedge/b1/a2, pedge/a1/b1, pedge/a1/b2, nedge/a2/b2, nedge/e1/b2, pedge/b1/e1, pedge/b1/e2, nedge/b2/e2, nedge/b21/e2, pedge/e1/b11, pedge/e1/b12, nedge/e2/b22, pedge/c11/c1, nedge/c12/c1, pedge/c22/c2, nedge/c21/c2, nedge/d1/c2, pedge/c1/d1, pedge/c1/d2, nedge/c2/d2, nedge/f1/d2, pedge/d1/f1, pedge/d1/f2, nedge/d2/f2, pedge/f1/f2}
						\path[\edgetype] (\source) -- (\dest);
			\end{tikzpicture}
		\caption{The infinite families of $2k$-vertex maximal connected cyclotomic $\Z$-graphs $C_{2k}^{++}$ and $C_{2k}^{+-}$ for $k \geqslant 2$.}
		\label{fig:maxcycs3}
	\end{figure}

	\begin{figure}[h!tbp]
		\centering
			\begin{tikzpicture}[scale=1.5, auto]
				\begin{scope}
					\foreach \type/\pos/\name in {{vertex/(0,0.5)/bgn}, {vertex/(1,1)/b1}, {vertex/(1,0)/b2}, {vertex/(2,0)/e2}, {vertex/(2,1)/e1}, {empty/(2.6,1)/b11}, {empty/(2.6,0)/b21}, {empty/(2.4,0.6)/b12}, {empty/(2.4,0.4)/b22}, {empty/(3.4,1)/c11}, {empty/(3.4,0)/c21}, {empty/(3.6,0.6)/c12}, {empty/(3.6,0.4)/c22}, {vertex/(4,1)/c1}, {vertex/(4,0)/c2}, {vertex/(5,1)/d1}, {vertex/(5,0)/d2}}
						\node[\type] (\name) at \pos {};
					\foreach \type/\pos/\name in {{pc/(6,1)/f1}, {pc/(6,0)/f2}}
						\node[\type] (\name) at \pos {$+$};
					\foreach \pos/\name in {{(3,0.5)/\dots}}
						\node at \pos {$\name$};
					\foreach \edgetype/\source/ \dest in {nedge/e1/b2, pedge/b1/e1, pedge/b1/e2, nedge/b2/e2, nedge/b21/e2, pedge/e1/b11, pedge/e1/b12, nedge/e2/b22, pedge/c11/c1, nedge/c12/c1, pedge/c22/c2, nedge/c21/c2, nedge/d1/c2, pedge/c1/d1, pedge/c1/d2, nedge/c2/d2, nedge/f1/d2, pedge/d1/f1, pedge/d1/f2, nedge/d2/f2, nedge/f1/f2}
						\path[\edgetype] (\source) -- (\dest);
					\foreach \edgetype/\source/\dest in {wwedge/b1/bgn, wwedge/b2/bgn}
						\path[\edgetype] (\source) -- (\dest);
				\end{scope}
			\end{tikzpicture}
		\caption{The infinite family of $(2k+1)$-vertex maximal connected cyclotomic $\Z[i]$-graphs $C_{2k+1}$ for $k \geqslant 1$.}
		\label{fig:maxcycs4}
	\end{figure}
	
	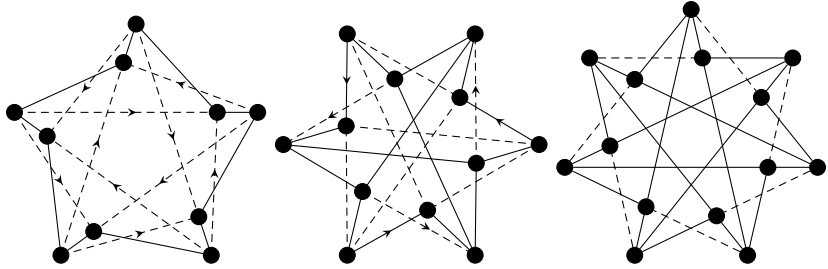
\begin{figure}[htbp]
		\centering
		\begin{tikzpicture}
			\newdimen\rad
			\rad=1.7cm
			\newdimen\radi
			\radi=1.2cm

			\foreach \x in {90,162,234,306,378}
			{
		    	\draw (\x:\rad) node[vertex] {};
				\draw (\x+8:\radi) node[vertex] {};
				\draw[pedge] (\x:\rad) -- (\x+8:\radi);
				\draw[pedge] (\x:\rad) -- (\x-72+8:\radi);
				\draw[wnedge2] (\x:\rad) -- (\x+72+8:\radi);
				\draw[wnedge2] (\x:\rad) -- (\x+216+8:\radi);
		    }
		\end{tikzpicture}
		\begin{tikzpicture}
			\newdimen\rad
			\rad=1.7cm
			\newdimen\radi
			\radi=0.9cm
			\def\shift{344}
			\foreach \x in {0,120,240}
			{
		    	\draw (\x:\rad) node[vertex] {};
				\draw (\x+\shift:\radi) node[vertex] {};
				\draw[pedge] (\x:\rad) -- (\x+\shift:\radi);
				\draw[nedge] (\x:\rad) -- (\x-60+\shift:\radi);
				\draw[nedge] (\x:\rad) -- (\x+180+\shift:\radi);
				\draw[wedge] (\x:\rad) -- (\x+60+\shift:\radi);
		    }
			\foreach \x in {60,180,300}
			{
		    	\draw (\x:\rad) node[vertex] {};
				\draw (\x+\shift:\radi) node[vertex] {};
				\draw[pedge] (\x:\rad) -- (\x+\shift:\radi);
				\draw[pedge] (\x:\rad) -- (\x+60+\shift:\radi);
				\draw[pedge] (\x:\rad) -- (\x+180+\shift:\radi);
				\draw[wnedge2] (\x-60+\shift:\radi) -- (\x:\rad);
		    }
		\end{tikzpicture}
			\begin{tikzpicture}
				\begin{scope}[auto, scale=1.5]
					\foreach \type/\pos/\name in {{vertex/(0,0)/a}, {vertex/(1,0)/b}, {vertex/(1.62,0.78)/c}, {vertex/(1.4,1.75)/d}, {vertex/(0.5,2.18)/e}, {vertex/(-0.4,1.75)/f}, {vertex/(-0.62,0.78)/g}, {vertex/(0.1,0.43)/t}, {vertex/(0.725,0.35)/u}, {vertex/(1.18,0.78)/v}, {vertex/(1.122,1.4)/w}, {vertex/(0.6,1.75)/x}, {vertex/(0,1.56)/y}, {vertex/(-0.22,0.97)/z}}
						\node[\type] (\name) at \pos {};
					\foreach \edgetype/\source/ \dest in {pedge/a/e, nedge/a/z, pedge/a/d, pedge/a/u, nedge/b/t, pedge/b/f, pedge/b/e, pedge/b/v, nedge/c/u, pedge/c/g, pedge/c/f, pedge/c/w, nedge/d/v, pedge/d/g, pedge/d/x, nedge/e/w, pedge/e/y, nedge/f/x, pedge/f/z, nedge/g/y, pedge/g/t}
						\path[\edgetype] (\source) -- (\dest);
				\end{scope}
			\end{tikzpicture}
	 	\caption{The sporadic maximal connected cyclotomic $\Z[\omega]$-graphs $S_{10}$, $S_{12}$, and $S_{14}$ of orders $10$, $12$, and $14$ respectively. The $\Z$-graph $S_{14}$ is also a $\Z[i]$-graph.}
		\label{fig:maxcycs5}
	\end{figure}

	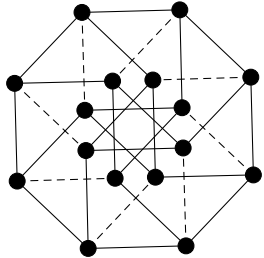
\begin{figure}[htbp]
		\centering
		\begin{tikzpicture}
			\newdimen\rad
			\rad=1.7cm
			\newdimen\radi
			\radi=0.7cm
			\foreach \x in {69,114,159,204,249,294,339,384}
			{
				\draw (\x:\radi) node[vertex] {};
				\draw[pedge] (\x:\radi) -- (\x+135:\radi);
		    }
			\foreach \x in {69,159,249,339}
			{
		    	\draw (\x:\rad) node[vertex] {};
				\draw[pedge] (\x:\rad) -- (\x+45:\rad);
				\draw[nedge] (\x:\rad) -- (\x+45:\radi);
				\draw[pedge] (\x:\rad) -- (\x-45:\radi);
		    }
			\foreach \x in {114,204,294,384}
			{
		    	\draw (\x:\rad) node[vertex] {};
				\draw[pedge] (\x:\rad) -- (\x+45:\rad);
				\draw[nedge] (\x:\rad) -- (\x+45:\radi);
				\draw[pedge] (\x:\rad) -- (\x-45:\radi);
		    }
		\end{tikzpicture}
		\caption{The sporadic maximal connected cyclotomic $\Z$-hypercube $S_{16}$.}
		\label{fig:maxcycs6}
	\end{figure}

	\begin{figure}[htbp]
		\centering
		\begin{tikzpicture}[scale=1.5, auto]
		\begin{scope}	
			\foreach \pos/\name/\type/\charge in {{(0,0.4)/a/pc/{2}}}
				\node[\type] (\name) at \pos {$\charge$}; 
			\node at (0,-0.4) {$S_1$};
		\end{scope}
		\end{tikzpicture}
		\begin{tikzpicture}[scale=1.4]
		\begin{scope}	
			\foreach \pos/\name/\type/\charge in {{(0,0)/a/zc/{}}, {(0,1)/b/zc/{}}}
				\node[\type] (\name) at \pos {$\charge$}; 
			\foreach \edgetype/\source/ \dest in {pedge/a/b}
			\path[\edgetype] (\source) -- node[weight2] {$2$} (\dest);
			\node at (0,-0.4) {$S_2$};
		\end{scope}
		\end{tikzpicture}
		\begin{tikzpicture}[scale=1.3, auto]
		\begin{scope}	
			\foreach \pos/\name/\type/\charge in {{(0,0)/a/pc/-}, {(0,1)/b/nc/+}}
				\node[\type] (\name) at \pos {$\charge$}; 
			\foreach \edgetype/\source/ \dest / \weight in {wwwedge/b/a/{}}
			\path[\edgetype] (\source) -- node[weight] {$\weight$} (\dest);
			\node at (0,-0.4) {$S_2^\dag$};
		\end{scope}
		\end{tikzpicture}
		\begin{tikzpicture}[scale=1.3, auto]
		\begin{scope}	
			\foreach \pos/\name/\type/\charge in {{(0,0)/a/zc/{}}, {(0,1)/b/zc/{}}, {(1,0)/c/zc/{}}, {(1,1)/d/zc/{}}}
				\node[\type] (\name) at \pos {$\charge$}; 
			\foreach \edgetype/\source/ \dest / \weight in {wwwnedge/a/c/{}, wwwedge/b/d/{}, pedge/a/b/{}, pedge/c/d/{}}
			\path[\edgetype] (\source) -- node[weight] {$\weight$} (\dest);
		\end{scope}
		\node at (0.5,-0.4) {$S_4^\ddag$};
		\end{tikzpicture}
		\begin{tikzpicture}[scale=1, auto]
		\begin{scope}	
			\foreach \pos/\name/\type/\charge in {{(-0.3,0.9)/a/zc/{}}, {(1.7,0.9)/b/zc/{}}, {(0,0)/c/pc/+}, {(1.4,0)/d/pc/+}, {(0.7,1.7)/e/zc/{}}}
				\node[\type] (\name) at \pos {$\charge$}; 
			\foreach \edgetype/\source/ \dest / \weight in {wnedge/a/c/{}, wnedge/b/d/{}, pedge/c/e/{}, pedge/e/d/{}, pedge/a/d/{}, pedge/b/c/{}, wnedge/e/b/{}, pedge/a/b/{}, wnedge/e/a/{}}
			\path[\edgetype] (\source) -- node[weight] {$\weight$} (\dest);
			\node at (0.7,-0.4) {$S_5$};
		\end{scope}
		\end{tikzpicture}
		\begin{tikzpicture}[scale=1.3, auto]
		\begin{scope}
			\foreach \pos/\name/\sign/\charge in {{(-0.1,0)/a/pc/-}, {(0.7,0)/b/nc/+}, {(1.2,0.5)/c/pc/-}, {(-0.6,0.5)/d/nc/+}, {(-0.1,1)/e/pc/-}, {(0.7,1)/f/nc/+}}
				\node[\sign] (\name) at \pos {$\charge$};
			\foreach \edgetype/\source/\dest/\weight in {{wnedge/b/a/{}}, {wnedge/a/d/{}}, {wnedge/f/e/{}}, {pedge/b/c/{}}, {wnedge/e/d/{}}, {pedge/f/c/{}}, {pedge/d/c/{}}, {pedge/a/f/{}}, {pedge/b/e/{}}}
				\path[\edgetype] (\source) -- node[weight] {$\weight$} (\dest);
				\node at (0.3,-0.4) {$S_6$};
		\end{scope}
		\end{tikzpicture}
		\begin{tikzpicture}[scale=1.3, auto]
		\begin{scope}
			\foreach \pos/\name/\sign/\charge in {{(-0.1,0)/a/zc/{}}, {(0.7,0)/b/zc/{}}, {(1.2,0.5)/c/nc/-}, {(-0.6,0.5)/d/pc/+}, {(-0.1,1)/e/zc/{}}, {(0.7,1)/f/zc/{}}}
				\node[\sign] (\name) at \pos {$\charge$};
			\foreach \edgetype/\source/\dest/\weight in {{wnedge/b/a/{}}, {wnedge/a/d/{}}, {wnedge/f/e/{}}, {pedge/b/c/{}}, {wnedge/e/d/{}}, {pedge/f/c/{}}, {pedge/d/c/{}}, {nedge/e/a/{}}, {pedge/b/f/{}}, {pedge/a/f/{}}, {pedge/b/e/{}}}
				\path[\edgetype] (\source) -- node[weight] {$\weight$} (\dest);
				\node at (0.3,-0.4) {$S_6^\dag$};
		\end{scope}
		\end{tikzpicture}
		\caption{The sporadic maximal connected cyclotomic $\Z[\omega]$-graphs of orders $1$, $2$, $4$, $5$, and $6$. The $\Z$-graphs $S_1$ and $S_2$ are also $\Z[i]$-graphs.}
		\label{fig:maxcycs8}
	\end{figure}
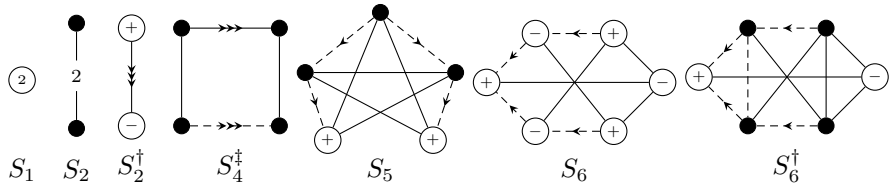

	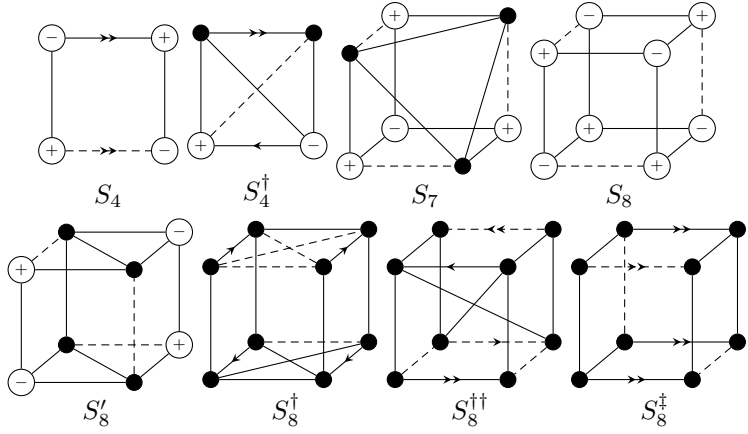
\begin{figure}[htbp]
		\centering
		\begin{tikzpicture}[scale=1.5, auto]
		\begin{scope}	
			\foreach \pos/\name/\type/\charge in {{(0,0)/a/pc/+}, {(0,1)/b/nc/-}, {(1,0)/c/nc/-}, {(1,1)/d/pc/+}}
				\node[\type] (\name) at \pos {$\charge$}; 
			\foreach \edgetype/\source/ \dest / \weight in {wwnedge/a/c/{}, wwedge/b/d/{}, pedge/b/a/{}, pedge/c/d/{}}
			\path[\edgetype] (\source) -- node[weight] {$\weight$} (\dest);
			\node at (0.5,-0.4) {$S_4$};
		\end{scope}
		\end{tikzpicture}
		\begin{tikzpicture}[scale=1.5, auto]
		\begin{scope}	
			\foreach \pos/\name/\type/\charge in {{(0,0)/a/pc/+}, {(0,1)/b/zc/{}}, {(1,0)/c/nc/-}, {(1,1)/d/zc/{}}}
				\node[\type] (\name) at \pos {$\charge$}; 
			\foreach \edgetype/\source/ \dest / \weight in {wedge/c/a/{}, wwedge/b/d/{}, pedge/b/a/{}, pedge/c/d/{}, nedge/a/d/{}, pedge/c/b/{}}
			\path[\edgetype] (\source) -- node[weight] {$\weight$} (\dest);
			\node at (0.5,-0.4) {$S_4^\dag$};
		\end{scope}
		\end{tikzpicture}
			\begin{tikzpicture}
				\def\XthreeDadj{0.6}
				\def\YthreeDadj{0.5}
				\begin{scope}
					\foreach \pos/\name/\sign/\charge in {{(0,0)/a/pc/+}, {(0,1.5)/b/zc/{}}, {(1.5,0)/c/zc/{}}, {(0 + \XthreeDadj,0 + \YthreeDadj)/d/nc/-}, {(0 + \XthreeDadj,1.5 + \YthreeDadj)/e/pc/+}, {(1.5 + \XthreeDadj,0 + \YthreeDadj)/f/pc/+}, {(1.5 + \XthreeDadj,1.5 + \YthreeDadj)/g/zc/{}}}
						\node[\sign] (\name) at \pos {$\charge$}; 
					\foreach \edgetype/\source/ \dest in {pedge/b/a, nedge/a/c, pedge/a/d, pedge/c/f, nedge/g/f, pedge/b/g, pedge/d/e, pedge/d/f, nedge/b/e, pedge/c/g, pedge/e/g, pedge/b/c}
					\path[\edgetype] (\source) -- (\dest);
				\node at (1,-0.4) {$S_{7}$};
				\end{scope}
				\end{tikzpicture}
				\begin{tikzpicture}
					\def\XthreeDadj{0.6}
					\def\YthreeDadj{0.5}
				\begin{scope}
					\foreach \pos/\name/\sign/\charge in {{(0,0)/a/nc/-}, {(0,1.5)/b/pc/+}, {(1.5,0)/c/pc/+}, {(1.5,1.5)/d/nc/-}, {(0 + \XthreeDadj,0 + \YthreeDadj)/e/pc/+}, {(0 + \XthreeDadj,1.5 + \YthreeDadj)/f/nc/-}, {(1.5 + \XthreeDadj,0 + \YthreeDadj)/g/nc/-}, {(1.5 + \XthreeDadj,1.5 + \YthreeDadj)/h/pc/+}}
						\node[\sign] (\name) at \pos {$\charge$}; 
					\foreach \edgetype/\source/ \dest in {pedge/b/a, nedge/a/c, pedge/a/e, pedge/c/g, pedge/c/d, nedge/b/f, pedge/b/d, pedge/e/f, pedge/e/g, nedge/h/g, pedge/f/h, pedge/d/h}
					\path[\edgetype] (\source) -- (\dest);
				\node at (1,-0.4) {$S_{8}$};
				\end{scope}
				\end{tikzpicture}

				\begin{tikzpicture}
					\def\XthreeDadj{0.6}
					\def\YthreeDadj{0.5}
				\begin{scope}	
					\foreach \pos/\name/\sign/\charge in {{(0,0)/a/nc/-}, {(0,1.5)/b/pc/+}, {(1.5,0)/c/zc/{}}, {(1.5,1.5)/d/zc/{}}, {(0 + \XthreeDadj,0 + \YthreeDadj)/e/zc/{}}, {(0 + \XthreeDadj,1.5 + \YthreeDadj)/f/zc/{}}, {(1.5 + \XthreeDadj,0 + \YthreeDadj)/g/pc/+}, {(1.5 + \XthreeDadj,1.5 + \YthreeDadj)/h/nc/-}}
						\node[\sign] (\name) at \pos {$\charge$}; 
					\foreach \edgetype/\source/ \dest in {pedge/b/a, pedge/a/c, pedge/a/e, pedge/c/g, nedge/c/d, nedge/b/f, pedge/b/d, pedge/e/f, nedge/e/g, pedge/h/g, pedge/f/h, pedge/d/h, pedge/d/f, pedge/c/e}
					\path[\edgetype] (\source) -- (\dest);
				\node at (1,-0.4) {$S^\prime_{8}$};
				\end{scope}
				\end{tikzpicture}
			\begin{tikzpicture}
				\def\XthreeDadj{0.6}
				\def\YthreeDadj{0.5}
			\begin{scope}	
				\foreach \pos/\name/\sign/\charge in {{(0,0)/a/zc/{}}, {(0,1.5)/b/zc/{}}, {(1.5,0)/c/zc/{}}, {(1.5,1.5)/d/zc/{}}, {(0 + \XthreeDadj,0 + \YthreeDadj)/e/zc/{}}, {(0 + \XthreeDadj,1.5 + \YthreeDadj)/f/zc/{}}, {(1.5 + \XthreeDadj,0 + \YthreeDadj)/g/zc/{}}, {(1.5 + \XthreeDadj,1.5 + \YthreeDadj)/h/zc/{}}}
					\node[\sign] (\name) at \pos {$\charge$}; 
				\foreach \edgetype/\source/ \dest/\weight in {pedge/b/a/{}, pedge/a/c/{}, pedge/a/g/{}, nedge/b/h/{}, wedge/e/a/{}, wedge/g/c/{}, pedge/c/d/{}, wedge/b/f/{}, nedge/b/d/{}, pedge/e/f/{}, nedge/e/g/{}, pedge/h/g/{}, pedge/f/h/{}, wedge/d/h/{}, nedge/d/f/{}, pedge/c/e/{}}
				\path[\edgetype] (\source) -- node[weight] {$\weight$} (\dest);
				\node at (1,-0.4) {$S^{\dagger}_{8}$};
			\end{scope}
			\end{tikzpicture}
			\begin{tikzpicture}
				\def\XthreeDadj{0.6}
				\def\YthreeDadj{0.5}
			\begin{scope}	
				\foreach \pos/\name/\sign/\charge in {{(0,0)/a/zc/{}}, {(0,1.5)/b/zc/{}}, {(1.5,0)/c/zc/{}}, {(1.5,1.5)/d/zc/{}}, {(0 + \XthreeDadj,0 + \YthreeDadj)/e/zc/{}}, {(0 + \XthreeDadj,1.5 + \YthreeDadj)/f/zc/{}}, {(1.5 + \XthreeDadj,0 + \YthreeDadj)/g/zc/{}}, {(1.5 + \XthreeDadj,1.5 + \YthreeDadj)/h/zc/{}}}
					\node[\sign] (\name) at \pos {$\charge$}; 
				\foreach \edgetype/\source/ \dest/\weight in {pedge/b/a/{}, wwedge/a/c/{}, nedge/a/e/{}, nedge/g/c/{}, pedge/d/c/{}, pedge/b/f/{}, wedge/d/b/{}, pedge/e/f/{}, wnedge/e/g/{}, pedge/h/g/{}, wwnedge/h/f/{}, pedge/h/d/{}, pedge/d/e/{}, pedge/b/g/{}}
				\path[\edgetype] (\source) -- node[weight] {$\weight$} (\dest);
				\node at (1,-0.4) {$S^{\dagger \dagger}_{8}$};
			\end{scope}
		\end{tikzpicture}
		\begin{tikzpicture}
			\def\XthreeDadj{0.6}
			\def\YthreeDadj{0.5}
		\begin{scope}	
			\foreach \pos/\name/\sign/\charge in {{(0,0)/a/zc/{}}, {(0,1.5)/b/zc/{}}, {(1.5,0)/c/zc/{}}, {(1.5,1.5)/d/zc/{}}, {(0 + \XthreeDadj,0 + \YthreeDadj)/e/zc/{}}, {(0 + \XthreeDadj,1.5 + \YthreeDadj)/f/zc/{}}, {(1.5 + \XthreeDadj,0 + \YthreeDadj)/g/zc/{}}, {(1.5 + \XthreeDadj,1.5 + \YthreeDadj)/h/zc/{}}}
				\node[\sign] (\name) at \pos {$\charge$}; 
			\foreach \edgetype/\source/ \dest/\weight in {pedge/b/a/{}, wwedge/a/c/{}, pedge/a/e/{}, nedge/g/c/{}, pedge/d/c/{}, pedge/b/f/{}, wwnedge/b/d/{}, nedge/e/f/{}, wwedge/e/g/{}, pedge/h/g/{}, wwedge/f/h/{}, pedge/h/d/{}}
			\path[\edgetype] (\source) -- node[weight] {$\weight$} (\dest);
			\node at (1,-0.4) {$S^{\ddag}_{8}$};
		\end{scope}
	\end{tikzpicture}

		\caption{The sporadic maximal connected cyclotomic $\Z[i]$-graphs of orders $4$, $7$, and $8$. The $\Z$-graphs $S_7$, $S_8$, and $S_8^\prime$ are also $\Z[\omega]$-graphs.}
		\label{fig:maxcycs10}
	\end{figure}

	\begin{theorem}[$\Z[i{]}$ uncharged unit entries]
		\label{thm:classununzi}
		Let $A$ be a maximal indecomposable cyclotomic matrix that has only zeros on the diagonal and whose nonzero entries are units from the ring $\Z[i]$.
		Then $A$ is equivalent to an adjacency matrix of one of the graphs $T_{2k}$, $T_{2k}^{(i)}$, $S_8^\dag$, $S_{14}$, and $S_{16}$ in Figures~\ref{fig:maxcycs1}, \ref{fig:maxcycs5}, \ref{fig:maxcycs6}, and \ref{fig:maxcycs10}.
	\end{theorem}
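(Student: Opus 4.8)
The proof is carried out in Section~\ref{sec:growZi} by a \emph{growing} argument. Let $A$ be a matrix as in the statement and let $G$ be its underlying graph: it is connected, every vertex is uncharged, and every edge-weight is a unit of $\Z[i]$, hence one of $1,-1,i,-i$. Since each such weight has modulus $1$, Lemma~\ref{lem:maxDeg4} tells us that every vertex of $G$ has at most four neighbours. Because $G$ is connected, we may order its vertices $v_1,\dots,v_n$ so that each induced subgraph $G_j$ on $\{v_1,\dots,v_j\}$ is connected, and by Theorem~\ref{thm:interlacing} each $G_j$ is again cyclotomic, uncharged, and unit-weighted. Thus $G$ is built from a single vertex by adjoining one vertex at a time, staying connected and cyclotomic throughout; and because $G$ is maximal the chain must end at $G$, since $G$ is a proper principal submatrix of no cyclotomic matrix whatsoever. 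The proof therefore reduces to (i) determining, for every relevant connected cyclotomic graph of this type, all of its one-vertex cyclotomic extensions, and (ii) showing that the only chains that cannot be extended terminate at $T_{2k}$, $T_{2k}^{(i)}$, $S_8^\dag$, $S_{14}$, or $S_{16}$.

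To make step~(i) a finite task at each stage, I first normalise using switching (Section~\ref{sec:exGr}): up to equivalence every edge of a fixed spanning tree of the current graph may be taken to have weight $1$, after which the remaining data are the weights on the finitely many non-tree edges, controlled via switching by the products of edge-weights around a cycle basis. A new vertex is joined to a subset of size at most four of the existing vertices, each by one of four weights, so there are finitely many candidate extensions; each is tested for cyclotomicity directly, or discarded by interlacing it against a small non-cyclotomic principal submatrix. Running this from the one-vertex graph settles all graphs of this type up to a modest number of vertices, exhibiting in particular $S_8^\dag$ (on eight vertices, with genuinely Gaussian weights $\pm i$), $S_{14}$, $S_{16}$, and the small members of the families $T_{2k}$; an edge of weight $\pm i$ can only be created where the cycle carrying it has an admissible weight-product, and this is precisely what distinguishes $T_{2k}^{(i)}$ and $S_8^\dag$ from their integer counterparts.

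The crux is step~(ii) for large graphs. The structural input is a propagation lemma: once the growing graph contains an induced copy of the $2\times 2$ square tile that forms a fundamental region of the strip in Figure~\ref{fig:maxcycs1} and still has unseen vertices, interlacing forces the next vertex to be adjoined so as to extend the strip by one more square (joining consecutive rungs by a $K_{2,2}$), with the weights determined up to switching. Iterating, $G$ contains a $2\times m$ ladder for as long as the growth continues, and the only ways to stop are to identify the two ends of the ladder: directly, yielding $T_{2k}$, or with a relative multiplication by $i$ at the seam, yielding $T_{2k}^{(i)}$ — a mismatched identification fails interlacing, and an unterminated end contradicts maximality. Any chain that never enters the ladder regime is shown, by carrying the bounded analysis of step~(i) a little further, to meet a non-cyclotomic configuration unless it has already reached one of the three sporadic graphs. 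Finally one checks that each graph on the list really is maximal: $T_{2k}$, $T_{2k}^{(i)}$, $S_8^\dag$, and $S_{16}$ are $4$-regular, so Lemma~\ref{lem:maxDeg4} forbids any further neighbour, while the maximality of $S_{14}$ (which has vertices of degree two) is verified by inspecting its one-vertex extensions directly.

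The main obstacle is the size and organisation of the case analysis in steps~(i) and~(ii): although the degree bound and interlacing eliminate the overwhelming majority of candidate extensions, isolating the surviving branches and—above all—proving the propagation lemma without descending into an unbounded brute-force enumeration is where the real work lies. The Gaussian units make this strictly harder than the integer-matrix case of McKee and Smyth, since $\pm i$-weighted edges create configurations around $S_8^\dag$ and the twisted ladder $T_{2k}^{(i)}$ with no real analogue, so these sub-cases must be tracked separately throughout rather than absorbed into the $\pm 1$ analysis.
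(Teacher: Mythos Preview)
Your high-level outline --- grow from a seed, show that beyond a bounded stage the graph is forced into the $T_{2k}$/$T_{2k}^{(i)}$ ladder, and mop up the sporadics by finite search --- matches the paper's architecture. But the two load-bearing tools of the paper are absent from your proposal, and without them the ``propagation lemma'' you invoke is exactly the part that does not follow from interlacing alone.

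First, the paper does not argue propagation via interlacing. It passes to the Gram matrix $A+2I=B^{*}B$ and works with the Gram vectors $\mathbf{v}_j$ of the vertices (Section~\ref{sec:grammatrix}). The key identity is Lemma~\ref{lem:adeqvec}: a length-$\sqrt{2}$ vector having unit inner product with four pairwise orthogonal length-$\sqrt{2}$ vectors is \emph{determined} by those inner products. From this one derives closed formulae for the ``hollow'' Gram vectors (Lemma~\ref{lem:gramvectsind}) and then the saturation/left-right adjacency lemmata (Lemmata~\ref{lem:saturation}--\ref{lem:adjacentboth}) which say, in your language, that any new neighbour of an interior ladder vertex is forced to be switch-equivalent to a specific hollow vertex. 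Interlacing is purely a negative certificate; it cannot by itself pin down the weights on the new vertex uniformly in the length of the ladder, and you concede as much in your last paragraph. The Gram-vector arithmetic is the missing positive mechanism.

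Second, the paper organises the finite search not as a raw growing process but through a list $\mathcal L_1$ of \emph{excluded subgraphs of type~II} (Figure~\ref{fig:xgraphsi}, Table~\ref{tab:exgraphsi}): small cyclotomic configurations (e.g.\ the two triangles $YA_4$, $YA_5$, the $4$-cycle $YA_6$, the $5$-cycle $YA_7$) each of which is shown to live only inside a short explicit list of maximal graphs. One then restricts the main Gram-vector argument to $\mathcal L_1$-free graphs, where in particular \emph{no triangles occur}; this is used repeatedly inside the saturation proofs. Your step~(ii) sentence ``any chain that never enters the ladder regime \dots\ meets a non-cyclotomic configuration unless it has already reached one of the three sporadic graphs'' is precisely the content of the type-II table, but you give no mechanism for proving it.

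Finally, a factual slip: $S_{14}$ does \emph{not} have vertices of degree two. As the paper notes, every maximal graph in the classification satisfies $A^{2}=4I$, so each diagonal entry of $A^{2}$ equals $4$ and every vertex has degree exactly $4$; maximality of $S_{14}$ (like the others) follows immediately from Lemma~\ref{lem:maxDeg4}.
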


	\begin{theorem}[$\Z[i{]}$ uncharged]
		\label{thm:classunzi}
		Let $A$ be a maximal indecomposable cyclotomic $\Z[i]$-matrix that has only zeros on the diagonal, and at least one entry of $A$ has norm greater than $1$.
		Then $A$ is equivalent to an adjacency matrix of one of the graphs $C_{2k}$, $S_2$, $S_8^{\dag \dag}$, and $S_8^\ddag$ in Figures~\ref{fig:maxcycs2} and \ref{fig:maxcycs10}.
	\end{theorem}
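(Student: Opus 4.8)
The plan is to bound the sizes of the matrix entries first, and then to work locally at an edge whose weight has norm $2$, splitting the argument according to whether such an edge can coexist with unit edges or whether it forces an essentially one-dimensional (strip-like) structure.

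By Lemma~\ref{lem:maxDeg4} every vertex of the underlying graph $G$ of $A$ has degree at most $4$. Since an entry $w(u,v)$ contributes $|w(u,v)|^2$ to both $\deg u$ and $\deg v$, and since the nonzero values of $|z|^2$ for $z\in\Z[i]$ are $1,2,4,5,\dots$, every entry of $A$ has norm in $\{1,2,4\}$. If some entry has norm $4$, it equals $2$ times a unit, both its endpoints already have degree $4$, and so each is adjacent only to the other; indecomposability then forces $G=S_2$. We may therefore assume every entry has norm at most $2$, that some edge $e=uv$ has weight a unit multiple of $1+i$, and (after a switching) that $w(u,v)=1+i$. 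The edge $e$ uses up $2$ of the degree of each of $u$ and $v$, so each of them has residual degree at most $2$: either one further incident norm-$2$ edge, in which case that vertex has exactly two neighbours, both reached by norm-$2$ edges (call such a vertex \emph{saturated}), or at most two further incident unit edges. A preliminary observation I would record is that a path all of whose edges have norm $2$ has at most three vertices, because a weighted path on four vertices with all weights of modulus $\sqrt2$ is diagonally-unitary-similar to $\sqrt2$ times the $0/1$ adjacency matrix of $P_4$, whose spectral radius $\sqrt2\cdot\tfrac{1+\sqrt5}{2}$ exceeds $2$.

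The main dichotomy is whether $G$ contains a saturated vertex $z$, with neighbours $z_1,z_2$ joined to $z$ by $1+i$-edges. If so, the preliminary observation shows $z_1$ and $z_2$ carry no further norm-$2$ edge, so that growing outward from $\{z_1,z_2\}$ uses only unit edges; deleting $z$ (and any other saturated vertices) leaves an induced uncharged $\Z[i]$-graph with unit entries, cyclotomic by Theorem~\ref{thm:interlacing}, hence by Theorem~\ref{thm:classununzi} (and the growing lemmas underpinning it, together with the excision machinery of Section~\ref{sec:exGr}) an induced subgraph of one of $T_{2k}$, $T_{2k}^{(i)}$, $S_8^{\dag}$, $S_{14}$, $S_{16}$. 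The degree bound then forces this unit part to be a linear strip of the hexagonal tessellation that appears in $T_{2k}$, whose only vertices with enough residual degree to receive a norm-$2$ edge are the two extreme ones; as a closed strip would be a copy of $T_{2k}$ and hence contain no norm-$2$ edge, the strip has two ends, and maximality forces each end to be capped by a saturated vertex attached via two norm-$2$ edges exactly as in Figure~\ref{fig:maxcycs2}. Thus $G=C_{2k}$ for some $k\geqslant2$. If instead $G$ has no saturated vertex, then every vertex is incident to at most one norm-$2$ edge, and both endpoints of $e$ carry unit edges; here I would use the catalogue of excluded subgraphs from Section~\ref{sec:exGr}, combined with interlacing, to bound the order of $G$ by $8$, and then a finite inspection of the uncharged cyclotomic $\Z[i]$-graphs of order at most $8$ that contain a norm-$2$ edge but no saturated vertex leaves precisely the two $3$-cube graphs $S_8^{\dag\dag}$ and $S_8^{\ddag}$ of Figure~\ref{fig:maxcycs10}.

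Finally I would check that $C_{2k}$ $(k\geqslant2)$, $S_2$, $S_8^{\dag\dag}$, and $S_8^{\ddag}$ are each maximal — a finite computation for the three sporadic graphs, and for the family the observation that every vertex already has degree $4$ together with a forbidden-subgraph argument ruling out any appended vertex — and that they are pairwise inequivalent, so that the list is complete. The main obstacle is the rigidity in the saturated case: showing that the unit-weight interior cannot branch, cannot itself carry a norm-$2$ edge, and must close up at exactly two saturated ends; and, in the non-saturated case, the argument bounding the order by $8$ and the ensuing enumeration. Both are where one must lean on the excision and growing machinery of Section~\ref{sec:exGr} and on the companion classification Theorem~\ref{thm:classununzi}, rather than on bare interlacing.
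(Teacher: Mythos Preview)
Your saturated/non-saturated dichotomy is a different organising principle from the paper's, which instead computes four small type-II excluded subgraphs $YB_1$--$YB_4$ (a norm-$2$ edge flanked by unit edges, and variants) and shows directly that any cyclotomic supergraph of one of these lies in $S_8^{\ddag}$, $S_8^{\dag\dag}$, or $S_2$; for graphs avoiding these, the paper builds a template $P_{2r+1}$ and proves, via Gram-vector inductive lemmata (Lemmas~\ref{lem:saturationweighted} and~\ref{lem:unchargedinduct}), that the graph embeds in some $C_{2k}$.

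Your saturated case has a genuine gap. From the $P_4$ bound you correctly deduce that the neighbours $z_1,z_2$ of a saturated vertex $z$ carry no further norm-$2$ edge, but you then assert that deleting $z$ (and any other saturated vertices) leaves a graph with \emph{only} unit entries. This does not follow: your $P_4$ observation is local to $z_1,z_2$ and says nothing about norm-$2$ edges elsewhere in $G$. Nothing you have proved excludes a connected cyclotomic $G$ containing both a saturated vertex and, further away, a norm-$2$ edge neither of whose endpoints is saturated; deleting the saturated vertices would then leave a norm-$2$ edge behind and Theorem~\ref{thm:classununzi} would not apply. Ruling this mixed configuration out is exactly the content of the paper's type-II computation for $YB_1$ (any cyclotomic graph containing a norm-$2$ edge with unit edges on both sides already sits inside $S_8^{\ddag}$, which has no saturated vertex), and you cannot bypass that work. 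The next step --- that the unit part must be a ``linear strip'' of $T_{2k}$ rather than a subgraph of $T_{2k}^{(i)}$, $S_8^{\dag}$, $S_{14}$, or $S_{16}$, or a non-strip region of $T_{2k}$ --- is likewise asserted without argument; pinning down this shape, and showing that the two ends must each be capped by a single saturated vertex in exactly the $C_{2k}$ pattern, is precisely what the paper's Gram-vector lemmata are doing, and your appeal to ``the excision machinery of Section~\ref{sec:exGr}'' does not substitute for it.
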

	
	\begin{theorem}[$\Z[i{]}$ charged]
		\label{thm:classchzi}
		Let $A$ be a maximal indecomposable cyclotomic $\Z[i]$-matrix that has at least one nonzero entry on the diagonal.
		Then $A$ is equivalent to an adjacency matrix of one of the graphs $C_{2k}^{++}$, $C_{2k}^{+-}$, $C_{2k +1}$, $S_1$, $S_4$, $S_4^\dag$, $S_7$, $S_8$, and $S_8^\prime$ in Figures~\ref{fig:maxcycs3}, \ref{fig:maxcycs4}, and \ref{fig:maxcycs10}.
	\end{theorem}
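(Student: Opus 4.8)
The plan is to split the argument into a finite base-case enumeration and an inductive \emph{growing} step, following the strategy set up in Sections~\ref{sec:exGr} and~\ref{sec:growZicharge}. First I would dispose of large charges: if a vertex $v$ has charge $\pm 2$, then the diagonal entry already contributes $|w(v,v)|^2 = 4$ to the degree of $v$, so by Lemma~\ref{lem:maxDeg4} the degree of $v$ must be exactly $4$, forcing $v$ to have no other neighbour; since $A$ is indecomposable this gives $A = S_1$, which is visibly maximal (any added vertex pushes the degree of $v$ above $4$). Hence from now on every charge is $\pm 1$, and consequently every charged vertex has at most three further incident edges counted with norm. Throughout, a $u$-switching lets me normalise one chosen edge at any prescribed vertex, which cuts down the number of local configurations that have to be examined.

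Next I would carry out the base case: using interlacing (Theorem~\ref{thm:interlacing}) together with the excluded-subgraph machinery of Section~\ref{sec:exGr} (the minimal connected non-cyclotomic charged $\Z[i]$-graphs), enumerate all charged connected cyclotomic $\Z[i]$-graphs on at most some fixed number $n_0$ of vertices. This produces exactly the sporadic graphs $S_1$, $S_4$, $S_4^\dag$, $S_7$, $S_8$, $S_8^\prime$ of Figure~\ref{fig:maxcycs10}, together with the small members of the families $C_{2k}^{++}$, $C_{2k}^{+-}$ of Figure~\ref{fig:maxcycs3} and $C_{2k+1}$ of Figure~\ref{fig:maxcycs4}. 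The purpose of taking $n_0$ large enough is that beyond this bound the structure around a charged vertex becomes rigid.

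The core of the proof is the growing step of Section~\ref{sec:growZicharge}. Fix a charged connected cyclotomic $\Z[i]$-matrix $A$ with more than $n_0$ vertices and a charged vertex $v$. The first claim is that, after switching, $v$ sits at the ``cap'' of a ladder: it has a charged partner $v'$ adjacent to it, and $v,v'$ together with the next pair of vertices form a four-cycle rung obeying the $C_{2k}^{++}/C_{2k}^{+-}/C_{2k+1}$ pattern; every other local configuration at $v$ (a charged triangle carrying too much degree, a second charged vertex in the wrong place, a stray $1+i$ edge incident to $v$, a high-valence attachment) is excluded because the induced subgraph on $v$ and a bounded neighbourhood already appears on the forbidden list, as detected via interlacing. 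The second claim is propagation: given a ladder segment already located inside $A$, the next rung is forced up to equivalence, since any other way of attaching the subsequent vertices yields a non-cyclotomic induced subgraph; hence the ladder extends uniquely. Because $A$ is finite the process terminates, and it can only terminate in one of three ways: at a second charged rung (yielding $C_{2k}^{++}$ or $C_{2k}^{+-}$), at a folded end formed by a pair of $1+i$ edges meeting a single vertex (yielding $C_{2k+1}$), or by closing up into a torus — but the last possibility is the \emph{uncharged} family $T_{2k}$ of Section~\ref{sec:growZi}, contradicting that $A$ is charged. Maximality of each listed graph is then immediate: both ends of every infinite-family member are capped, so no vertex can be adjoined, while maximality of the sporadics is part of the finite enumeration.

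I expect the main obstacle to be the propagation claim, and within it the interplay between the charges and the norm-$2$ edges $1+i$: one has to rule out a $1+i$ edge appearing in the interior of a ladder, rule out triangles and degree-$4$ branchings in the interior, and do this using interlacing rather than the crude degree bound of Lemma~\ref{lem:maxDeg4} — degree $4$ is permitted, so many of the offending interior configurations are not caught by degree considerations alone. Equivalently, the real work is verifying that the finite family of forbidden charged $\Z[i]$-subgraphs is rich enough to obstruct every non-ladder way of growing $A$, and this is where essentially all of the case analysis of Section~\ref{sec:growZicharge} is concentrated.
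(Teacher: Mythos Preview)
Your outline matches the paper's at the coarsest level --- a finite enumeration followed by an inductive growing step along a ladder --- but two technical ingredients that the paper relies on are absent, and without them the propagation step you correctly flag as the main obstacle does not go through.

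First, the paper's propagation is driven by \emph{Gram vectors} (Section~\ref{sec:grammatrix}, Lemma~\ref{lem:adeqvec}), not by interlacing alone. For the charged case the paper introduces template graphs $P_{2r}\subset P'_{2r}$ with one charged end, writes each hollow-vertex Gram vector as an explicit linear combination of the solid ones (Lemma~\ref{lem:gramvectsindcharge}), and then uses inner products to force any new vertex $v$ adjacent to an interior vertex to satisfy $\mathbf v=\mathbf v'_{t\pm 1}$ (Lemma~\ref{lem:saturationcharged}). This is what makes the next rung \emph{unique up to switching}: interlacing only tells you which local configurations are forbidden, not which one is forced, and without the identity $2\mathbf v_t=\mathbf v_{t-1}-\mathbf v'_{t-1}+\mathbf v_{t+1}+\mathbf v'_{t+1}$ you would need an unbounded family of type~I excluded subgraphs to control the inner products of $v$ with \emph{all} earlier ladder vertices simultaneously. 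Your proposal never mentions Gram vectors, and the phrase ``do this using interlacing rather than the crude degree bound'' underestimates what is needed.

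Second, the sporadics are not handled purely by the base enumeration. The paper isolates a list $\mathcal L_3$ of \emph{type~II} excluded subgraphs (Figure~\ref{fig:xgraphsCII}, Table~\ref{tab:exgraphsicharge}) --- small \emph{cyclotomic} graphs such as $YC_1$ (a $+$ adjacent to a $-$) or $YC_3$ (a $(1+i)$-edge meeting a charge) whose presence already confines the ambient graph to a finite explicit list of maximal cyclotomics --- and runs the inductive Gram-vector argument only on $\mathcal L_3$-free graphs. Your parenthetical ``the minimal connected non-cyclotomic charged $\Z[i]$-graphs'' suggests you intend only type~I exclusions; but, for example, two charges of opposite sign with a common uncharged neighbour ($YC_5$) is cyclotomic and must be peeled off before the ladder picture is valid for arbitrarily large graphs. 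Once the type~II subgraphs are excluded, one end of $P_{2r}$ is already a charged cap, so the ``torus closure'' alternative you list cannot occur and need not be argued away.
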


	The theorems above give a complete classification of cyclotomic matrices over the Gaussian integers as follows.

	\begin{theorem}[Cyclotomic matrices over $\Z[i{]}$]
		\label{thm:classzi}
		Let $A$ be a maximal indecomposable cyclotomic matrix over the ring $\Z[i]$.
		Then $A$ is equivalent to an adjacency matrix of one of the graphs from Theorems~\ref{thm:classununzi}, \ref{thm:classunzi}, or \ref{thm:classchzi}.
		
		Moreover, every indecomposable cyclotomic $\Z[i]$-matrix is contained in a maximal one.
	\end{theorem}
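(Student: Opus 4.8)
The first statement is a routine case analysis. Let $A$ be a maximal indecomposable cyclotomic matrix over $\Z[i]$. By Lemma~\ref{lem:maxDeg4} every vertex has degree at most $4$, so $|w(u,v)|^2\le 4$ for all $u,v$; hence every entry of $A$ lies in a fixed finite subset of $\Z[i]$ and every diagonal (charge) entry, being a rational integer, lies in $\{0,\pm 1,\pm 2\}$ (this is why the finite edge lists of Table~\ref{tab:ziedge} suffice). Exactly one of the following now holds: $A$ has zero diagonal with all nonzero entries units of $\Z[i]$; $A$ has zero diagonal with some nonzero entry that is not a unit; or $A$ has a nonzero diagonal entry. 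These are precisely the hypotheses of Theorems~\ref{thm:classununzi}, \ref{thm:classunzi} and \ref{thm:classchzi}, so in each case $A$ is equivalent to an adjacency matrix of one of the graphs listed in those theorems, which is the asserted classification. (That each graph on the list is conversely a maximal indecomposable cyclotomic $\Z[i]$-graph is checked when it is produced in Sections~\ref{sec:growZi}--\ref{sec:growZicharge}; this is what is needed below.)

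For the \emph{moreover} clause we must show that every indecomposable cyclotomic $\Z[i]$-matrix $A$ is a principal submatrix of a maximal one. Adjoining vertices greedily one at a time need not terminate: the periodic families already produce infinite strictly ascending chains of indecomposable cyclotomic $\Z[i]$-matrices — for instance the induced open $K_{2,2}$-chains obtained by deleting the two cap vertices from $C_{2k}$, $C_{2k+2},\dots$ are nested and are all cyclotomic by interlacing (Theorem~\ref{thm:interlacing}), even though each is of course contained in the corresponding $C_{2k}$. So I would split on the order $n$ of $A$. If $n$ is at most an explicit bound $N_0$ — a little above the order $16$ of $S_{16}$ — the claim is a finite verification: by Lemma~\ref{lem:maxDeg4} there are, up to equivalence, only finitely many cyclotomic $\Z[i]$-matrices of each order, and for every indecomposable one of order at most $N_0$ one exhibits a graph from the lists of Theorems~\ref{thm:classununzi}--\ref{thm:classchzi} containing it as an induced subgraph; this is the base case of the growing arguments of Sections~\ref{sec:growZi}--\ref{sec:growZicharge}.

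The remaining case $n>N_0$ rests on the rigidity of large cyclotomic graphs. The degree bound forces such a graph to be ``thin'', and interlacing applied to short subpaths leaves only a short list of admissible local configurations, so a large indecomposable cyclotomic $\Z[i]$-graph must look, locally everywhere, like a segment of one of the periodic patterns underlying $T_{2k}$, $T_{2k}^{(i)}$, $C_{2k}$, $C_{2k+1}$, $C_{2k}^{++}$, $C_{2k}^{+-}$; propagating this local description along the graph and using connectedness shows $A$ is an induced subgraph of a member of one of these families, each of which is maximal, and we are done. I expect this last step — the structure theorem for large indecomposable cyclotomic $\Z[i]$-graphs — to be the main obstacle; it is in fact the substance of the proofs of Theorems~\ref{thm:classununzi}--\ref{thm:classchzi}, so in practice the \emph{moreover} clause is cleanest to read off as a byproduct of those proofs rather than re-derived from the bare list of maximal graphs.
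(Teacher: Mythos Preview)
Your reading of the first assertion is exactly right: it is a trichotomy on whether the diagonal is zero and whether every nonzero off-diagonal entry is a unit, and Theorems~\ref{thm:classununzi}, \ref{thm:classunzi}, \ref{thm:classchzi} cover the three cases. The paper states Theorem~\ref{thm:classzi} with no separate proof for precisely this reason.

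For the \emph{moreover} clause your final sentence is the correct diagnosis: it is a byproduct of the proofs in Sections~\ref{sec:growZi}--\ref{sec:growZicharge}, not something one recovers afterwards from the bare list of maximal graphs. Where your write-up differs from the paper is the intermediate organization. You split on the order $n$ of $A$ (small $n$ by exhaustive check, large $n$ by a rigidity/propagation argument). The paper never makes a size split. Instead, each of the three sections sets up a list $\mathcal L$ of excluded subgraphs and runs a dichotomy on an arbitrary connected cyclotomic $\Z[i]$-graph $G$: either $G$ contains a graph from $\mathcal L$ that is an excluded subgraph of type~II, in which case \emph{by definition} of type~II every connected cyclotomic supergraph of that subgraph --- in particular $G$ itself --- sits inside one of a fixed finite list of maximal graphs (Tables~\ref{tab:exgraphsi}, \ref{tab:exgraphsiweight}, \ref{tab:exgraphsicharge}); or $G$ is $\mathcal L$-free, and then Lemmas~\ref{lem:unL1free}, \ref{lem:L2free}, \ref{lem:L3free} show directly that $G$ embeds in a member of one of the infinite families. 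In either branch $G$ is contained in a maximal connected cyclotomic graph, which is the \emph{moreover} clause.

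So your approach is not wrong in spirit, but the paper's dichotomy is cleaner: the ``type~II'' notion already packages the containment conclusion, so no separate finite verification for small $n$ is needed, and the ``rigidity for large $n$'' you anticipate is exactly the content of the saturation/Gram-vector lemmas feeding into Lemmas~\ref{lem:unL1free}, \ref{lem:L2free}, \ref{lem:L3free}. Your size threshold $N_0$ and the local-pattern propagation sketch are therefore redundant once those lemmas are in hand.
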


	\subsection{Classification of cyclotomic matrices over $\Z[\omega]$}

	As with the classification over the Gaussian integers, we split up the result to deal with uncharged graphs and charged graphs separately.

	\begin{theorem}[$\Z[\omega{]}$ uncharged]
		\label{thm:classunzw}
		Let $A$ be a maximal indecomposable cyclotomic $\Z[\omega]$-matrix that has only zeros on the diagonal.
		Then $A$ is equivalent to an adjacency matrix of one of the graphs $T_{2k}$, $T_{2k}^{(\omega)}$, $S_2$, $S_4^\ddag$, $S_{10}$, $S_{12}$, $S_{14}$, and $S_{16}$ in Figures~\ref{fig:maxcycs1}, \ref{fig:maxcycs5}, \ref{fig:maxcycs6}, \ref{fig:maxcycs8}, and \ref{fig:maxcycs10}.
	\end{theorem}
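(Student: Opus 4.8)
The plan is to adapt, to the Eisenstein setting, the growing method developed for the Gaussian case in Sections~\ref{sec:growZi} and~\ref{sec:proof2} (which yields Theorems~\ref{thm:classununzi} and~\ref{thm:classunzi}); the only genuinely new feature is the prime $1+\omega$, of norm $3$. I would begin by bounding the edge-weights. By Lemma~\ref{lem:maxDeg4} every vertex of a cyclotomic graph has degree at most $4$, so every edge-weight $w$ satisfies $|w|^2\le 4$; since the Eisenstein norm form $a^2+ab+b^2$ never takes the value $2$, the possibilities are $|w|^2\in\{1,3,4\}$, i.e.\ $w$ is a unit, an associate of $1+\omega$, or an associate of $2$. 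Using $u$-switchings at vertices (which merely permute these associate classes) one checks that, up to equivalence, every edge may be taken from the seven weights listed in Table~\ref{tab:zwedge}.

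Next I would dispose of the graphs carrying a heavy edge. If some edge has norm $4$, then both its endpoints already have degree exactly $4$, so they meet no other edge and the graph is $S_2$. If some edge $uv$ has norm $3$, then each of $u$ and $v$ can carry at most one further edge, necessarily of norm $1$; this pins the local picture down to a short path or a small cycle, and a direct finite check — growing outward from the norm-$3$ edge and applying interlacing (Theorem~\ref{thm:interlacing}) against the minimal non-cyclotomic graphs assembled in Section~\ref{sec:exGr} — leaves only $S_4^{\ddag}$, together with graphs that are proper subgraphs of it and hence not maximal.

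The bulk of the work is the remaining case, in which every edge-weight is a unit, and here the argument runs as in the Gaussian uncharged case. One first classifies, by an exhaustive search constrained by Lemma~\ref{lem:maxDeg4} and interlacing, the connected cyclotomic graphs on at most a few vertices, thereby pinning down the finite list of minimal forbidden subgraphs. One then runs the growing procedure: starting from a small cyclotomic seed (a triangle, a short path, a $4$-cycle, or a degree-$4$ vertex together with its neighbourhood) one repeatedly adjoins a vertex, discards any supergraph containing a forbidden subgraph, and records the maximal graphs reached. The path- and $4$-cycle-type seeds close up into the toral ladder families $T_{2k}$ and their twisted analogues $T_{2k}^{(\omega)}$ (the twist now being by the Eisenstein unit $\omega$ rather than $i$), the denser seeds terminate in the sporadic graphs $S_{10}$, $S_{12}$, $S_{14}$, $S_{16}$, and the degenerate small seeds account for $S_2$ and $S_4^{\ddag}$. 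Maximality of each listed graph is then a finite verification: no extra vertex can be attached without creating a forbidden subgraph or violating the degree bound.

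The main obstacle is controlling the combinatorial branching of the growing step in the unit-weight case: one must show that every branch either dies (produces a non-cyclotomic graph) or terminates in a graph on the list, and that the six Eisenstein units do not generate switching classes beyond those already handled over $\Z[i]$ — in particular that the only new infinite family is $T_{2k}^{(\omega)}$ and the only new sporadics are $S_{10}$, $S_{12}$, and $S_4^{\ddag}$, the graphs $S_2$, $S_4^{\ddag}$ aside, with $S_{14}$ and $S_{16}$ carried over from the $\Z$-/$\Z[i]$-analysis. The delicate structural point is verifying that the infinite families are exhaustive, namely that a cyclotomic graph which has grown ``long'' must be a sub-ladder of some $T_{2k}$ or $T_{2k}^{(\omega)}$ and that these are themselves maximal; this is handled, as in Section~\ref{sec:proof2}, by analysing the limited ways in which a long cyclotomic ladder can be capped off at its ends.
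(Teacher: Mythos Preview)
Your high-level plan matches the paper's: dispose of heavy edges first (norm $4$ gives $S_2$, norm $3$ gives $S_4^{\ddag}$), then reduce the unit-weight case to a rerun of the Gaussian argument with an Eisenstein list of excluded subgraphs. The paper does exactly this in Section~\ref{sec:the_eisenstein_integers}, assembling the type~II excluded subgraphs $YD_1,\ldots,YD_8$ (the $\Z[\omega]$ analogues of $YA_1,\ldots,YA_7$) and then invoking Section~\ref{sec:growZi} verbatim.

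However, your description of that reused argument is not quite right, and the imprecision matters. You describe the unit-weight step as a growing procedure from seeds that ``records the maximal graphs reached''; such a search does not terminate on its own, and the infinite families cannot be certified that way. What Section~\ref{sec:growZi} actually does is (i) separate off type~II excluded subgraphs, each of which can only grow into a fixed finite list of maximal graphs (this is where the sporadics $S_{10},S_{12},S_{14},S_{16}$ enter), and (ii) for $\mathcal L$-free graphs, use Gram vectors and the saturation Lemmata~\ref{lem:saturation}--\ref{lem:adjacentboth} to prove inductively that any such graph containing $P_{l,r}$ with $l+r>2$ is forced into $T_{2k}$ or $T_{2k}^{(\omega)}$. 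This Gram-vector machinery is the engine that handles the infinite case, and your sketch (``analysing the limited ways in which a long cyclotomic ladder can be capped off at its ends'') does not capture it. Also, the relevant template is Section~\ref{sec:growZi}, not Section~\ref{sec:proof2}: the latter treats norm-$2$ edges over $\Z[i]$, a case that simply does not arise over $\Z[\omega]$.
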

	
	\begin{theorem}[$\Z[\omega{]}$ charged]
		\label{thm:classchzw}
		Let $A$ be a maximal indecomposable cyclotomic $\Z[\omega]$-matrix that has at least one nonzero entry on the diagonal.
		Then $A$ is equivalent to an adjacency matrix of one of the graphs $S_1$, $S_2^\dag$, $C_{2k}^{++}$, $C_{2k}^{+-}$, $S_4^\dag$, $S_5$, $S_6$, $S_6^\dag$, $S_7$, $S_8$, and $S_8^\prime$ in Figures~\ref{fig:maxcycs3}, \ref{fig:maxcycs8}, and \ref{fig:maxcycs10}.
	\end{theorem}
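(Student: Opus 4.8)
The plan is to follow the ``grow-and-prune'' strategy used by McKee--Smyth and Taylor for cyclotomic matrices, adapted to the base ring $\Z[\omega]$ and to the presence of charges, leaning on the machinery set up in Section~\ref{sec:exGr}, on the interlacing and degree bounds of Section~\ref{sec:MaG}, and on the already-established uncharged classification, Theorem~\ref{thm:classunzw}. First come reductions forced by Lemma~\ref{lem:maxDeg4}. A vertex of charge $2$ already has degree $4$, so it can carry no incident edge; hence the only connected charged cyclotomic graph containing such a vertex is the single vertex $S_1$, which is maximal since any supergraph would give it degree $\geqslant 5$. So we may assume all charges are $\pm 1$. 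A charge-$(\pm1)$ vertex has only degree $3$ left for edges: it meets no norm-$4$ edge (in particular no weight-$2$ edge), it meets at most one norm-$3$ edge (weight $\pm(1+\omega)$), in which case it is a leaf, and otherwise it is incident to at most three unit-weight edges. The ``norm-$3$ leaf'' configurations form a tiny finite list handled directly, producing $S_2^\dag$. This isolates the main case: a connected cyclotomic $\Z[\omega]$-graph all of whose charges are $\pm1$ and all of whose edges have unit weight, with at least one charged vertex.

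Next, the growing step. Given a cyclotomic graph $G$ of this type and a one-vertex supergraph $G'$, interlacing (Theorem~\ref{thm:interlacing}) together with the degree bound forces the new vertex $v$ to have at most four incident edges, each edge-weight lying (up to a $u$-switching at $v$) in a fixed finite set of units; and cyclotomicity of $G'$ must be rechecked, conveniently by verifying that $z^{n}\chi_{G'}(z+1/z)$ is a product of cyclotomic polynomials (Kronecker). Only finitely many $G'$ survive each step, so starting from a single charged vertex one builds a search tree whose maximal nodes are exactly the maximal charged cyclotomic $\Z[\omega]$-graphs. Two sub-tasks then remain: the branches that never terminate, which must be the infinite families, and the finite part of the tree.

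For the infinite families, suppose a connected charged cyclotomic $\Z[\omega]$-graph $G$ has many vertices. Deleting its charged vertices (which the degree bound keeps few in number and far apart) leaves an uncharged cyclotomic $\Z[\omega]$-graph, hence by Theorem~\ref{thm:classunzw} a disjoint union of induced subgraphs of the known maximal uncharged ones; the only ones admitting long path-like pieces are the toral ladders $T_{2k}$ (the family $C_{2k}$ is Gaussian-only, and $C_{2k+1}$ needs weight-$(1+i)$ caps unavailable over $\Z[\omega]$). One then shows that such a long ladder interior forces the global structure of $G$, and that the two open ends of the ladder can be closed off by charged vertices in exactly two ways, yielding precisely $C_{2k}^{++}$ and $C_{2k}^{+-}$; every other branch dies within bounded order. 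Conversely one checks that these ladder graphs are indeed cyclotomic and maximal for every $k$, which is a routine computation with their near-circulant structure.

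The complementary bounded-order part is a finite, computer-assisted enumeration: list all maximal charged cyclotomic $\Z[\omega]$-graphs on at most (roughly) eight vertices, using the growing step and pruning non-cyclotomic branches, and verify maximality of each survivor by checking that every one-vertex extension either forces degree $\geqslant 5$ or pushes an eigenvalue outside $[-2,2]$; the $\Z$-valued sporadics $S_7,S_8,S_8'$ reuse the analysis behind Theorem~\ref{thm:classchzi}. This recovers $S_4^\dag$, $S_5$, $S_6$, $S_6^\dag$, $S_7$, $S_8$, and $S_8'$, and together with $S_1$, $S_2^\dag$, $C_{2k}^{++}$, $C_{2k}^{+-}$ completes the list. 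I expect the principal obstacle to be the structural lemma pinning down the large graphs --- bounding the ``width'' of the ladder interior and proving the charged caps attach in a unique way --- carried out while keeping the switching and unit bookkeeping tight enough that the enumeration neither misses nor duplicates an equivalence class.
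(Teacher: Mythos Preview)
Your route diverges from the paper's at the crucial structural step, and that step carries a real gap. The paper does \emph{not} argue by deleting the charged vertices and invoking Theorem~\ref{thm:classunzw} on the remainder. Instead it reruns the machinery of Section~\ref{sec:growZicharge} over $\Z[\omega]$: it fixes a list of excluded subgraphs (Figures~\ref{fig:xgraphsit1}, \ref{fig:xgraphsw}, \ref{fig:xgraphswCI}, \ref{fig:xgraphswcharge}), so that the type-II subgraphs $YE_1,\dots,YE_9$ already force containment in one of the listed sporadic maxima via Table~\ref{tab:exgraphswcharge}; any graph that is $\mathcal L$-free with respect to this list is then handled by the Gram-vector template $P_{2r}$ (a path beginning at a charged vertex) and the saturation/induction lemmata analogous to Lemmata~\ref{lem:saturationcharged}--\ref{lem:chargedinductcharge}, which force it into $C_{2k}^{++}$ or $C_{2k}^{+-}$. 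The finite seed computation only needs to go up to five vertices, because the template $P_6$ already appears by then; and the argument is actually \emph{simpler} than over $\Z[i]$ since $\Z[\omega]$ has no elements of norm~$2$, so no analogue of the $C_{2k+1}$ family arises.

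The gap in your proposal is the assertion that the degree bound keeps the charged vertices ``few in number and far apart.'' Lemma~\ref{lem:maxDeg4} bounds the degree of each vertex, not the total number of charged vertices or their pairwise distances: $C_{2k}^{++}$ has two \emph{adjacent} pairs of charged vertices, and the sporadic $S_8$ has all eight of its vertices charged. That the large graphs carry exactly four charges is an \emph{output} of the classification, not an input you may assume. Without an a~priori bound, deleting an uncontrolled set of charged vertices leaves a possibly disconnected uncharged graph whose pieces need not individually be long ladders, and reassembling $G$ from those pieces together with the deleted vertices and all their incident edges is exactly the hard part you have swept into one sentence. The paper sidesteps this entirely by never deleting vertices: the Gram-vector identities (Lemma~\ref{lem:adeqvec} and its charged analogue Lemma~\ref{lem:gramvectsindcharge}) pin down every neighbour of the growing path as a switch of a prescribed hollow vertex. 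You cite Section~\ref{sec:exGr} but never actually deploy the Gram vectors, which are the engine of the paper's inductive step.
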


	Again, the theorems above give a complete classification of cyclotomic matrices over the Eisenstein integers.

	\begin{theorem}[Cyclotomic matrices over $\Z[\omega {]}$]
		\label{thm:classzw}
		Let $A$ be a maximal indecomposable cyclotomic matrix over the ring $\Z[\omega]$.
		Then $A$ is equivalent to an adjacency matrix of one of the graphs from Theorems~\ref{thm:classunzw} or \ref{thm:classchzw}.
		
		Moreover, every indecomposable cyclotomic $\Z[\omega]$-matrix is contained in a maximal one.
	\end{theorem}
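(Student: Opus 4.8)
The theorem has two parts. The first part --- the list of maximal indecomposable cyclotomic $\Z[\omega]$-matrices --- reduces in one line to the two preceding theorems: a maximal indecomposable cyclotomic $\Z[\omega]$-matrix $A$ either has all diagonal entries equal to $0$, in which case it is uncharged and Theorem~\ref{thm:classunzw} applies, or it has at least one nonzero diagonal entry, in which case it is charged and Theorem~\ref{thm:classchzw} applies. The union of the two lists is exactly the collection of graphs named in the statement, so this part needs no further argument.

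The content is the \emph{moreover} clause. The plan is to argue by ascent: starting from an arbitrary indecomposable cyclotomic $\Z[\omega]$-matrix $A$, if $A$ is not already maximal then, by the definition of maximality, it is a proper principal submatrix of some indecomposable cyclotomic $\Z[\omega]$-matrix of strictly larger order; iterating yields a chain $A = A_0 \subsetneq A_1 \subsetneq A_2 \subsetneq \cdots$ of such matrices with strictly increasing orders, and it suffices to see that this chain must reach a maximal matrix. This is where the first part of the theorem is used: among the maximal indecomposable cyclotomic $\Z[\omega]$-matrices, every one of order greater than $16$ is a member of one of the four infinite families $T_{2k}$, $T_{2k}^{(\omega)}$, $C_{2k}^{++}$, $C_{2k}^{+-}$, and the proofs of Theorems~\ref{thm:classunzw} and~\ref{thm:classchzw} --- which operate by enlarging an arbitrary indecomposable cyclotomic matrix one vertex at a time --- show moreover that \emph{every} indecomposable cyclotomic $\Z[\omega]$-matrix of order greater than $16$ is already a principal submatrix of a member of one of these four families. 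Hence as soon as $A_j$ has order greater than $16$ --- which occurs after finitely many steps, since the orders strictly increase --- we have $A \subseteq A_j \subseteq F$ for some maximal $F$ in one of the families; and if no $A_j$ ever exceeds order $16$ the chain is finite and terminates at a maximal matrix of order at most $16$. Either way $A$ sits inside a maximal indecomposable cyclotomic $\Z[\omega]$-matrix.

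The only step that is not bookkeeping is the assertion that a large (more than $16$ vertices) indecomposable cyclotomic $\Z[\omega]$-matrix must embed in one of the four infinite families, and I expect this to be the main obstacle --- it is in effect the whole classification problem for ``large'' matrices. The tools for it are already in place: Lemma~\ref{lem:maxDeg4} bounds every vertex degree by $4$, and the interlacing theorem (Theorem~\ref{thm:interlacing}) together with the growing machinery set up in Section~\ref{sec:exGr} will be used to show that a sufficiently large connected cyclotomic $\Z[\omega]$-graph is forced onto a toral tessellation $T_{2k}$, $T_{2k}^{(\omega)}$ or a charged cylinder $C_{2k}^{++}$, $C_{2k}^{+-}$. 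Granting Theorems~\ref{thm:classunzw} and~\ref{thm:classchzw}, the remaining ingredients here --- the diagonal case split, the stepwise ascent, and the observation that only finitely many steps are possible below order $17$ --- are routine.
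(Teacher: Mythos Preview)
Your proposal is correct, and the paper treats this theorem as an immediate corollary of Theorems~\ref{thm:classunzw} and~\ref{thm:classchzw}, giving no separate argument.

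For the ``moreover'' clause your ascent chain works, but it is more elaborate than what the paper's method actually delivers. The proofs of Theorems~\ref{thm:classunzw} and~\ref{thm:classchzw} already establish containment in a maximal graph directly for \emph{every} connected cyclotomic $\Z[\omega]$-graph, with no size threshold: either $G$ contains one of the type-II excluded subgraphs, in which case by the very definition of type~II it is a subgraph of one of the finitely many maximal graphs recorded in Tables~\ref{tab:exgraphsw} and~\ref{tab:exgraphswcharge}; or $G$ is $\mathcal L$-free, and then the inductive lemmata (the $\Z[\omega]$ analogues of Lemmas~\ref{lem:unL1free} and~\ref{lem:L3free}) place it inside a member of one of the four infinite families $T_{2k}$, $T_{2k}^{(\omega)}$, $C_{2k}^{++}$, $C_{2k}^{+-}$. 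This dichotomy applies to $A$ itself, so there is no need to climb a chain $A_0\subsetneq A_1\subsetneq\cdots$ until the order exceeds~$16$. The threshold $16$ you invoke is a true but incidental observation --- it is the order of the largest sporadic maximal --- rather than an ingredient of the argument.
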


	As a corollary we obtain McKee and Smyth's classification \cite{McKee:IntSymCyc07} of cyclotomic integer symmetric matrices, this result is not used in our proof.
	Their work used Gram matrices and the classification of indecomposable line systems \cite{Cam:DGC91}.
	We expand on the use of Gram matrices but we do not make use of line systems.
	We also obtain as a consequence an early result of Smith \cite{Smith:CycloG}, which classifies all cyclotomic $\{0,1\}$-graphs.
	
	Following McKee and Smyth \cite{McKee:IntSymCyc07}, we remark that all the maximal connected cyclotomic graphs (with adjacency matrices $A$) of Theorems~\ref{thm:classzi} and \ref{thm:classzw} are `visibly' cyclotomic: $A^2 = 4I$, hence all their eigenvalues are $\pm 2$.
	
	\section{Excluded subgraphs and Gram matrices}
	\label{sec:exGr}
	
	In this section we introduce the main tools used in the classification.
	
	\subsection{Excluding subgraphs}
	\label{sec:exsubgraphs}

	In order to complete their classification of cyclotomic integer matrices, McKee and Smyth \cite{McKee:IntSymCyc07} used a combination of excluded subgraphs and Gram matrices.
	This proves to be a very effective method for our purposes.
	Here, we give an abstract definition of these excluded subgraphs.
	If a graph is not cyclotomic, then by Theorem~\ref{thm:interlacing}, it cannot be a subgraph of a cyclotomic graph.
	We call such a graph an \textbf{excluded subgraph of type I}.

	Certain connected cyclotomic graphs have the property that if one tries to grow them to give larger connected cyclotomic graphs then one always stays inside one of a finite number of fixed maximal connected cyclotomic graphs. 
	We call a graph with this property an \textbf{excluded subgraph of type II}.
	Given a connected cyclotomic graph $G$ and a finite list $L$ of maximal connected cyclotomic graphs containing $G$, we describe the process used to determine whether or not a graph $G$ has this property.
	Consider all possible ways of attaching a vertex to $G$ such that the resulting graph $H$ is both connected and cyclotomic.
	Check that each supergraph $H$ is equivalent to a subgraph of one of the graphs in $L$ (if not then $G$ is \emph{not} an excluded subgraph of type II with respect to the list $L$).
	Repeat this process with all supergraphs $H$.
	Since $L$ is a finite list of graphs on a finite number of vertices, this process terminates.
	The author has written PARI~\cite{PARI} code that can check if a cyclotomic graph is an excluded subgraph of type II.
	This code is available on request.

	Given a list $\mathcal L$ of graphs, we define an \textbf{$\mathcal L$-free} graph to be a connected cyclotomic graph that does not contain any graph equivalent to any graph in $\mathcal L$.
	We have included being both \emph{connected} and \emph{cyclotomic} in this definition to ease the terminology below. 
	We shall have cause to use different lists at various points in our proofs.

	\subsection{Cyclotomic matrices and Gram matrices}
	\label{sec:grammatrix}
	
	Let $S$ be a subset of $\C$ and suppose $G$ is a cyclotomic $S$-graph with adjacency matrix $A$.
	Then all of the eigenvalues of $A$ are contained in the interval $[-2,2]$.
	The matrix $M = A + 2I$ is positive semidefinite and therefore decomposes as $M = B^* B$, where the columns of $B$ are vectors in a unitary space $\C^m$ for some arbitrary $m \in \N$.
	Hence $M$ is the Gram matrix for the set of vectors forming the columns of $B$; these vectors are called \textbf{Gram vectors}.
	Each vertex $v$ of $G$ has a corresponding Gram vector $\mathbf{v}$ and the inner product of Gram vectors $\mathbf{u}$ and $\mathbf{v}$ corresponds to the adjacency of the vertices $u$ and $v$.
	By examining the diagonal of the Gram matrix, one can see that Gram vectors corresponding to uncharged vertices have squared length $2$.
	Similarly, a Gram vector corresponding to a vertex of charge $+1$ (respectively $-1$) has squared length $3$ (respectively $1$).

	\subsubsection{Gram vector constraints}

	In the proof of the classification of cyclotomic matrices we exploit the dependencies of Gram vectors that satisfy certain conditions as outlined in the next lemma.

	\begin{lemma}\label{lem:adeqvec}
		Let $V$ be a unitary space.
		Let $\mathbf{x}_1$, $\mathbf{x}_2$, $\mathbf{x}_3$, and $\mathbf{x}_4$ be pairwise orthogonal vectors from $V$ each having squared length $2$.
		Let $\mathbf{v}$ be a vector of $V$ satisfying
		\[
			|\inprod{\mathbf{v}, \mathbf{x}_1}| = \dots = |\inprod{\mathbf{v}, \mathbf{x}_4}| = 1, \text{ and } \; \inprod{\mathbf{v}, \mathbf{v}} = 2.
		\]
		Then we can write
		\[
			2 \mathbf{v} = \inprod{\mathbf{v}, \mathbf{x}_1}\mathbf{x}_1 + \inprod{\mathbf{v}, \mathbf{x}_2}\mathbf{x}_2 + \inprod{\mathbf{v}, \mathbf{x}_3}\mathbf{x}_3 + \inprod{\mathbf{v}, \mathbf{x}_4}\mathbf{x}_4.
		\]
	\end{lemma}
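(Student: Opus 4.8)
\emph{Proof sketch.} The plan is to identify the right-hand side with twice the orthogonal projection of $\mathbf{v}$ onto $W := \operatorname{span}\{\mathbf{x}_1,\mathbf{x}_2,\mathbf{x}_3,\mathbf{x}_4\}$ and then to invoke the equality case of Bessel's inequality. First I would note that, since the $\mathbf{x}_i$ are pairwise orthogonal and each has squared length $2$ (in particular each is nonzero), they form an orthogonal basis of $W$, so the orthogonal projection $\mathbf{p}$ of $\mathbf{v}$ onto $W$ is
\[
	\mathbf{p} = \sum_{i=1}^{4} \frac{\inprod{\mathbf{v},\mathbf{x}_i}}{\inprod{\mathbf{x}_i,\mathbf{x}_i}}\,\mathbf{x}_i = \frac12\sum_{i=1}^{4}\inprod{\mathbf{v},\mathbf{x}_i}\,\mathbf{x}_i,
\]
so that the asserted identity is precisely $\mathbf{v} = \mathbf{p}$.

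Next I would compute $\inprod{\mathbf{p},\mathbf{p}}$ using the orthogonality of the $\mathbf{x}_i$ together with the hypotheses $\abs{\inprod{\mathbf{v},\mathbf{x}_i}} = 1$ and $\inprod{\mathbf{x}_i,\mathbf{x}_i} = 2$:
\[
	\inprod{\mathbf{p},\mathbf{p}} = \sum_{i=1}^{4}\frac{\abs{\inprod{\mathbf{v},\mathbf{x}_i}}^2}{\inprod{\mathbf{x}_i,\mathbf{x}_i}} = \sum_{i=1}^4 \frac12 = 2 = \inprod{\mathbf{v},\mathbf{v}}.
\]
Finally I would write $\mathbf{v} = \mathbf{p} + (\mathbf{v}-\mathbf{p})$ with $\mathbf{v}-\mathbf{p}$ orthogonal to $W$, hence to $\mathbf{p}$, and apply the Pythagorean identity to get $\inprod{\mathbf{v},\mathbf{v}} = \inprod{\mathbf{p},\mathbf{p}} + \inprod{\mathbf{v}-\mathbf{p},\mathbf{v}-\mathbf{p}}$. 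Combined with the computation above this forces $\inprod{\mathbf{v}-\mathbf{p},\mathbf{v}-\mathbf{p}} = 0$, so positive-definiteness of the inner product gives $\mathbf{v}=\mathbf{p}$; multiplying by $2$ yields the claim.

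There is no serious obstacle here; the only point requiring a little care is the convention for which argument of $\inprod{\cdot,\cdot}$ is linear, which shifts a complex conjugate in the projection coefficients but does not affect the statement, since the hypotheses constrain only the moduli $\abs{\inprod{\mathbf{v},\mathbf{x}_i}}$ and the norm computation is convention-free.
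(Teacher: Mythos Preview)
Your proof is correct and is essentially the same argument as the paper's: both write $\mathbf{v}$ as its orthogonal projection onto $\operatorname{span}\{\mathbf{x}_1,\dots,\mathbf{x}_4\}$ plus an orthogonal remainder $\xi$, compute the projection coefficients as $\inprod{\mathbf{v},\mathbf{x}_j}/2$, and then use the norm hypotheses to force $\inprod{\xi,\xi}=0$. The only cosmetic difference is that the paper obtains $\inprod{\xi,\xi}=0$ by taking the inner product of the expansion with $\mathbf{v}$ (getting $\inprod{\xi,\mathbf{v}}=0$ and then $\inprod{\xi,\xi}=0$), whereas you compute $\inprod{\mathbf{p},\mathbf{p}}$ directly and invoke Pythagoras.
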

	\begin{proof}
		With $\lambda_j$s in $\C$, we write 
		\begin{equation}
			\label{eqn:vlem}
			\mathbf{v} = \lambda_1 \mathbf{x}_1 + \lambda_2 \mathbf{x}_2 + \lambda_3 \mathbf{x}_3 + \lambda_4 \mathbf{x}_4 + \xi,
		\end{equation}
		with $\xi$ orthogonal to each $\mathbf{x}_j$.
		Taking inner products with equation	\eqref{eqn:vlem} and each $\mathbf{x}_j$ gives
		\[
			\lambda_j = \frac{\inprod{\mathbf{v}, \mathbf{x}_j}}{2}.
		\]
		Now we write
		\begin{equation}
			\label{eqn:vlem2}
			2 \mathbf{v} = \inprod{\mathbf{v}, \mathbf{x}_1}\mathbf{x}_1 + \inprod{\mathbf{v}, \mathbf{x}_2}\mathbf{x}_2 + \inprod{\mathbf{v}, \mathbf{x}_3}\mathbf{x}_3 + \inprod{\mathbf{v}, \mathbf{x}_4}\mathbf{x}_4 + 2\xi.
		\end{equation}
		By taking the inner product of equation~\eqref{eqn:vlem2} with $\mathbf{v}$, we see that $\inprod{\xi,\mathbf{v}} = 0$ and hence, using \eqref{eqn:vlem}, we have $\xi = 0$.
	\end{proof}

	\subsubsection{Hollow vertices and saturated vertices}

	Let $H$ be a cyclotomic $S$-graph contained in some cyclotomic $S$-graph $H^\prime$.
	Given $H^\prime$ and $H$, we refer to the vertices $V(H^\prime)\backslash V(H)$ as the \textbf{hollow vertices} of $H$.
	For a graph $G$, let $N_G(v)$ denote the set of neighbours of $v$ in $G$, that is, the set of vertices $u \in V(G)$ with nonzero weight $w(u,v)$.
	We define the \textbf{hollow-degree} of a vertex $v \in V(H)$ as 
	\[
		d_{H^\prime}(v) := \sum_{u \in N_{H^\prime}(v)} |w(u,v)|^2.
	\]
	This generalises the degree of a vertex $v \in V(H)$, which is given by $d_H(v)$.
	Let $V^\prime_4(H)$ denote the subset of vertices of $H$ that have hollow-degree $4$, i.e., the set
	\[
		\left \{ v \in V(H) : d_{H^\prime}(v) = 4 \right \}.
	\]

	Since $H$ and $H^\prime$ are cyclotomic, each of their vertices $v$ has a corresponding Gram vector $\mathbf{v}$.
	Our notion of switching carries through to vectors naturally; we say that two vectors $\mathbf{u}$ and $\mathbf{v}$ are \textbf{switch-equivalent} if $\mathbf{u} = x \mathbf{v}$ for some unit $x$.
	Accordingly, the vertices $u$ and $v$ are called \textbf{switch-equivalent} if their corresponding Gram vectors are switch-equivalent.
	Let $G$ be a cyclotomic graph that contains $H$ and, for a vertex $v \in V(G)$, let $N^\prime_G(v)$ denote the set of vertices $u \in N_G(v)$ such that $u$ is switch-equivalent to some vertex in $V(H^\prime)$.
	Define $\mathcal V_G(H)$ to be the subset of $V(G)$ consisting of the vertices of $H$ and their adjacent vertices that are switch-equivalent to hollow vertices, in symbols
	\[
		\mathcal V_G(H) = \bigcup_{v \in V(H)} N^\prime_G(v).
	\]

	Let $\mathcal L$ be a list of graphs.
	A vertex $v \in V(H)$ is called $H^\prime$-\textbf{saturated in} $H$ if, for any $\mathcal L$-free $S$-graph $G$ containing $H$, each vertex in $N_G(v)$ is either in $V(H)$ or is switch-equivalent to some hollow vertex, i.e., $N_G(v) = N^\prime_G(v)$.
	Note that the definition of a vertex being $H^\prime$-saturated in $H$ depends on the set $S$, the list $\mathcal L$, and the graphs $H$ and $H^\prime$.
	Let $\Gamma$ (resp.\ $\Gamma^\prime$) be a cyclotomic $S$-supergraph of $H$ (resp.\ $H^\prime$) and suppose that $\Gamma^\prime$ contains $\Gamma$.
	Then any vertex that is $H^\prime$-saturated in $H$ is also $\Gamma^\prime$-saturated in $\Gamma$.
	We refer imprecisely to these vertices simply as `saturated vertices'.
		
	\section{Proof of Theorem~\ref{thm:classununzi}}
	\label{sec:growZi}
	
	In this section we prove Theorem~\ref{thm:classununzi} and hence we restrict our attention to the set $S = \left \{0, \pm 1, \pm i \right \}$.
	
	\subsection{Excluded subgraphs}
	\label{sec:exgraphs}
	\begin{figure}[htbp]
		\centering
		\begin{tikzpicture}
		\begin{scope}[yshift=-0.5cm]	
			\foreach \pos/\name in {{(0,0)/a}, {(0,1)/b}, {(1,1)/c}, {(1,0)/d},{(2,0)/f}}
				\node[vertex] (\name) at \pos {}; 
			\node at (1,-0.5) {$XA_1$};
			\foreach \edgetype/\source/ \dest /\weight in {pedge/a/b/{}, pedge/b/c/{}, pedge/c/d/{}, pedge/a/d/{}, pedge/d/f/{}}
			\path[\edgetype] (\source) -- node[weight] {$\weight$} (\dest);
		\end{scope}
		\end{tikzpicture}
		\begin{tikzpicture}
			\begin{scope}
				\newdimen\rad
				\rad=0.6cm

				\foreach \x in {90,162,234,306,378}
				{
			    	\draw (\x:\rad) node[vertex] {};
					\draw[pedge] (\x:\rad) -- (\x+72:\rad);
			    }
				\draw (338:1.3cm) node[vertex] {};
			\draw[pedge] (306:\rad) -- (338:1.3cm);
			\end{scope}
			\begin{scope}[xshift=-0.2cm, yshift=-0.5cm]
				\node at (0.5,-0.5) {$XA_2$};
			\end{scope}
		\end{tikzpicture}
		\caption{some non-cyclotomic uncharged $\Z$-graphs.}
		\label{fig:xgraphsit1}
	\end{figure}
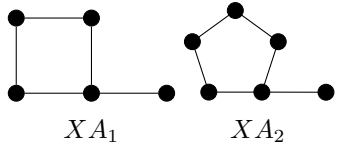

	\begin{figure}[htbp]
		\centering
		\begin{tikzpicture}
		\begin{scope}[yshift=-0.5cm]	
			\foreach \pos/\name in {{(0,0)/a}, {(0,1)/b}, {(1,1)/c}, {(1,0)/d}, {(2,1)/e}, {(2,0)/f}}
				\node[vertex] (\name) at \pos {}; 
			\node at (1,-0.5) {$YA_1$};
			\foreach \edgetype/\source/ \dest /\weight in {pedge/a/b/{}, pedge/b/c/{}, nedge/c/d/{}, pedge/a/d/{}, pedge/d/f/{}, pedge/c/e/{}}
			\path[\edgetype] (\source) -- node[weight] {$\weight$} (\dest);
		\end{scope}
		\end{tikzpicture}
		\begin{tikzpicture}
		\begin{scope}[xshift=2.75cm, yshift=-0.5cm]	
			\foreach \pos/\name in {{(0,0)/a}, {(0,1)/b}, {(1,1)/c}, {(1,0)/d}, {(2,1)/e}, {(2,0)/f}}
				\node[vertex] (\name) at \pos {}; 
			\node at (1,-0.5) {$YA_2$};
			\foreach \edgetype/\source/ \dest /\weight in {pedge/a/b/{}, pedge/b/c/{}, nedge/c/d/{}, pedge/a/d/{}, pedge/d/f/{}, pedge/c/e/{}, pedge/e/f/{}}
			\path[\edgetype] (\source) -- node[weight] {$\weight$} (\dest);
		\end{scope}
		\end{tikzpicture}
		\begin{tikzpicture}
		\begin{scope}[xshift=2.75cm, yshift=-0.5cm]	
			\foreach \pos/\name in {{(0,0)/a}, {(0,1)/b}, {(1,1)/c}, {(1,0)/d}, {(2,1)/e}, {(2,0)/f}}
				\node[vertex] (\name) at \pos {}; 
			\node at (1,-0.5) {$YA_3$};
			\foreach \edgetype/\source/ \dest /\weight in {pedge/a/b/{}, pedge/c/d/{}, pedge/a/d/{}, pedge/d/f/{}, pedge/e/f/{}}
			\path[\edgetype] (\source) -- node[weight] {$\weight$} (\dest);
		\end{scope}
		\end{tikzpicture}
		\begin{tikzpicture}
		\begin{scope}[xshift=7.25cm, yshift=-0.5cm]	
			\foreach \pos/\name/\sign/\charge in {{(0.5,1)/a/zc/{}}, {(0,0)/b/zc/{}}, {(1,0)/c/zc/{}}}
				\node[\sign] (\name) at \pos {$\charge$}; 
			\node at (0.5,-0.5) {$YA_4$};
			\foreach \edgetype/\source/ \dest /\weight in {pedge/a/b/{}, pedge/c/b/{}, pedge/a/c/{}}
			\path[\edgetype] (\source) -- node[weight] {$\weight$} (\dest);
		\end{scope}
		\end{tikzpicture}
		\begin{tikzpicture}
		\begin{scope}[xshift=9cm, yshift=-0.5cm]
			\foreach \pos/\name/\sign/\charge in {{(0.5,1)/a/zc/{}}, {(0,0)/b/zc/{}}, {(1,0)/c/zc/{}}}
				\node[\sign] (\name) at \pos {$\charge$}; 
			\node at (0.5,-0.5) {$YA_5$};
			\foreach \edgetype/\source/ \dest /\weight in {pedge/a/b/{}, wedge/b/c/{}, pedge/a/c/{}}
			\path[\edgetype] (\source) -- node[weight] {$\weight$} (\dest);
		\end{scope}
		\end{tikzpicture}
		\begin{tikzpicture}
		\begin{scope}[xshift=5.5cm, yshift=-0.5cm]	
			\foreach \pos/\name in {{(0,0)/a}, {(0,1)/b}, {(1,1)/c}, {(1,0)/d}}
				\node[vertex] (\name) at \pos {}; 
			\node at (0.5,-0.5) {$YA_6$};
			\foreach \edgetype/\source/ \dest /\weight in {pedge/a/b/{}, pedge/b/c/{}, pedge/c/d/{}, wedge/a/d/{}}
			\path[\edgetype] (\source) -- node[weight] {$\weight$} (\dest);
		\end{scope}
		\end{tikzpicture}
		\begin{tikzpicture}
			\begin{scope}
				\newdimen\rad
				\rad=0.6cm

				\foreach \x in {90,162,234,306,378}
				{
			    	\draw (\x:\rad) node[vertex] {};
			    }
				\foreach \x in {90,162,306,378}
				{
					\draw[pedge] (\x:\rad) -- (\x+72:\rad);
			    }
			\draw[wedge] (234:\rad) -- (306:\rad);
			\end{scope}
			\begin{scope}[xshift=-0.5cm, yshift=-0.5cm]
				\node at (0.5,-0.5) {$YA_7$};
			\end{scope}
		\end{tikzpicture}
		\caption{some cyclotomic $\Z[i]$-graphs that are contained as subgraphs of fixed maximal connected cyclotomic $\Z[i]$-graphs.}
		\label{fig:xgraphsi}
	\end{figure}

	\begin{table}[htbp]
		\begin{center}
		\begin{tabular}{c|c}
			Excluded subgraph & Maximal cyclotomics \\
			\hline
			$YA_1$ & $S_{14}$ and $S_{16}$ \\
			$YA_2$ & $S_{14}$ and $S_{16}$ \\
			$YA_3$ & $S_{14}$ and $S_{16}$ \\
			$YA_4$ & $T_{6}$ and $S_7$ \\
			$YA_5$ & $T^{(i)}_{6}$ and $S^\dagger_8$ \\
			$YA_6$ & $T^{(i)}_{8}$ and $S^{\dag \dag}_8$ \\
			$YA_7$ & $T^{(i)}_{10}$
		\end{tabular}
		\end{center}
		\caption{Excluded subgraphs from Figure~\ref{fig:xgraphsi} and (up to equivalence) their containing maximal connected cyclotomic $\Z[i]$-graphs.}
		\label{tab:exgraphsi}
	\end{table}

	In Table~\ref{tab:exgraphsi} we list each excluded subgraph of type II in Figure~\ref{fig:xgraphsi} along with every maximal connected cyclotomic $\Z[i]$-graph that contains it.
	Let $\mathcal L_1$ consist of vertices of charge $\pm 1$ and the graphs from Figure~\ref{fig:xgraphsi}.
	Hence, all $\mathcal L_1$-free $S$-graphs are uncharged and, since $YA_4$ and $YA_5$ are excluded, no $\mathcal L_1$-free $S$-graph can contain a subgraph whose underlying graph is a triangle.
	We refer to this fact as the `exclusion of triangles'.
	For this section, the notion of a saturated vertex will depend on the list $\mathcal L_1$.
	
		\subsection{Inductive Lemmata} 
		\label{sub:inductive_lemmata}

		Define $P_{l,r}$ (solid vertices) and $P^\prime_{l,r}$ (solid and hollow vertices) with the following $\Z$-graph
	\begin{center}	
		\begin{tikzpicture}
			\begin{scope}[auto, scale=1.3]
				\foreach \type/\pos/\name in {{ghost/(0,0)/a2}, {vertex/(0,1)/a1}, {vertex/(1,1)/b1}, {ghost/(1,0)/b2}, {empty/(1.6,1)/b11}, {empty/(1.6,0)/b21}, {empty/(1.4,0.6)/b12}, {empty/(1.4,0.4)/b22}, {empty/(2.4,1)/c11}, {empty/(2.4,0)/c21}, {empty/(2.6,0.6)/c12}, {empty/(2.6,0.4)/c22}, {vertex/(3,1)/c1}, {ghost/(3,0)/c2}, {vertex/(4,1)/d1}, {ghost/(4,0)/d2}, {vertex/(5,1)/e1}, {vertex/(5,0)/e2}, {vertex/(6,1)/ee1}, {ghost/(6,0)/ee2}, {vertex/(7,1)/f1}, {ghost/(7,0)/f2}, {empty/(7.6,1)/f11}, {empty/(7.6,0)/f21}, {empty/(7.4,0.6)/f12}, {empty/(7.4,0.4)/f22}, {empty/(8.4,1)/g11}, {empty/(8.4,0)/g21}, {empty/(8.6,0.6)/g12}, {empty/(8.6,0.4)/g22}, {vertex/(9,1)/g1}, {ghost/(9,0)/g2}, {vertex/(10,1)/h1}, {ghost/(10,0)/h2}}
					\node[\type] (\name) at \pos {};
				\foreach \pos/\name in {{(8,0.5)/\dots}, {(2,0.5)/\dots}, {(0,1.3)/v_{-l}}, {(0,-0.4)/v_{-l}^\prime}, {(1,1.3)/v_{-l+1}}, {(1,-0.4)/v_{-l+1}^\prime}, {(3,1.3)/v_{-2}}, {(3,-0.4)/v_{-2}^\prime}, {(4,1.3)/v_{-1}}, {(4,-0.4)/v_{-1}^\prime}, {(5,1.3)/v_0}, {(5,-0.4)/v_0^\prime}, {(6,1.3)/v_{1}}, {(6,-0.4)/v_{1}^\prime}, {(7,1.3)/v_2}, {(7,-0.4)/v_2^\prime}, {(9,1.3)/v_{r-1}}, {(9,-0.4)/v_{r-1}^\prime}, {(10,1.3)/v_r}, {(10,-0.4)/v_r^\prime,}}
					\node at \pos {$\name$};
				\foreach \edgetype/\source/ \dest in {nedge/b1/a2, pedge/a1/b1, pedge/a1/b2, nedge/a2/b2, nedge/b21/b2, pedge/b1/b11, pedge/b1/b12, nedge/b2/b22, pedge/c11/c1, nedge/c12/c1, pedge/c22/c2, nedge/c21/c2, nedge/d1/c2, pedge/c1/d1, pedge/c1/d2, nedge/c2/d2, nedge/f21/f2, pedge/f1/f11, pedge/f1/f12, nedge/f2/f22, pedge/g11/g1, nedge/g12/g1, pedge/g22/g2, nedge/g21/g2, nedge/e1/d2, pedge/d1/e1, pedge/d1/e2, nedge/d2/e2, nedge/h1/g2, pedge/g1/h1, pedge/g1/h2, nedge/g2/h2, nedge/ee1/e2, pedge/e1/ee1, pedge/e1/ee2, nedge/e2/ee2, nedge/f1/ee2, pedge/ee1/f1, pedge/ee1/f2, nedge/ee2/f2}
					\path[\edgetype] (\source) -- (\dest);
			\end{scope}
		\end{tikzpicture}	
	\end{center}
		where $l \geqslant 0$ and $r \geqslant 0$.
		Here, the set of hollow vertices of $P_{l,r}$ is the set $V(P^\prime_{l,r}) \backslash V(P_{l,r})$.
		Clearly both $P_{l,r}$ and $P^\prime_{l,r}$ are cyclotomic since they are contained in $T_{2(l+r+2)}$.
		Note that $P_{l,r}$ has $l+r+2$ vertices and $P_{l,r}^\prime$ has $2(l+r+1)$ vertices.
		The set $V^\prime_4(P_{l,r})$ of vertices of $P_{l,r}$ having hollow-degree $4$ is the set $\left \{ v_j : -l < j < r \right \} \cup \left \{ v_0^\prime \right \}$.
		\begin{lemma}\label{lem:gramvectsind}
			In $P_{l,r}$ for $l \geqslant 2 \text{ or } r \geqslant 2$, we can write the Gram vector for each hollow vertex in terms of Gram vectors of the vertices as follows:
			\begin{align*}
				\mathbf{v}^\prime_{-t} &= \mathbf{v}_{-t} + 2\sum_{j=1}^{t-1} (-1)^{t+j} \mathbf{v}_{-j} + (-1)^t \big ( \mathbf{v}_0 + \mathbf{v}_0^\prime \big ), & \text{ for } t \in \left \{1,\dots, l\right \}. \\
				\mathbf{v}^\prime_t &= -\mathbf{v}_t - 2\sum_{j=1}^{t-1} (-1)^{t+j} \mathbf{v}_j - (-1)^t \big ( \mathbf{v}_0 - \mathbf{v}_0^\prime \big ), & \text{ for } t \in \left \{1,\dots, r\right \}.
			\end{align*}
		\end{lemma}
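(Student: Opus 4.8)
The plan is to apply Lemma~\ref{lem:adeqvec} at every vertex of hollow-degree $4$ and then read the resulting vector identities as a two-step recurrence for the hollow Gram vectors along each arm of $P_{l,r}$. To set up: $P_{l,r}$ and $P^\prime_{l,r}$ are cyclotomic (both sit inside $T_{2(l+r+2)}$) and uncharged, so each carries a system of Gram vectors, every Gram vector has squared length $2$, and since all edge-weights are $\pm 1$ we have $|\inprod{\mathbf u,\mathbf v}|\in\{0,1\}$ for distinct vertices $u,v$. The key observation is that whenever $v\in V^\prime_4(P_{l,r})$ — that is, $v=v_j$ with $-l<j<r$, or $v=v_0^\prime$ — its four neighbours in $P^\prime_{l,r}$ are pairwise non-adjacent there (each pair lies at distance $2$), so their Gram vectors form a pairwise-orthogonal quadruple of squared-length-$2$ vectors; since $\inprod{\mathbf v,\mathbf v}=2$ and $\mathbf v$ has unit-modulus inner product with each of them, Lemma~\ref{lem:adeqvec} applies and gives $2\mathbf v=\sum_u w(u,v)\,\mathbf u$ over those four neighbours. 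Reading the weights off the defining figure, this yields $2\mathbf v_j=\mathbf v_{j-1}+\mathbf v_{j+1}-\mathbf v^\prime_{j-1}+\mathbf v^\prime_{j+1}$ for $-l<j<r$, together with $2\mathbf v_0^\prime=\mathbf v_{-1}-\mathbf v_1-\mathbf v^\prime_{-1}-\mathbf v^\prime_{1}$ when $v_0^\prime\in V^\prime_4$.

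Next I would rearrange the first family into the two-step recurrence $\mathbf v^\prime_{j+1}-\mathbf v^\prime_{j-1}=2\mathbf v_j-\mathbf v_{j-1}-\mathbf v_{j+1}$, valid for $-l<j<r$, which moves a hollow Gram vector two columns outward along an arm at the cost of only solid Gram vectors, and then initialise it. If $l\geqslant1$ and $r\geqslant1$, both $v_0,v_0^\prime\in V^\prime_4(P_{l,r})$, and adding, respectively subtracting, the relations at $v_0$ and $v_0^\prime$ eliminates $\mathbf v^\prime_{-1}$, respectively $\mathbf v^\prime_{1}$, giving $\mathbf v^\prime_{1}=\mathbf v_0-\mathbf v_1-\mathbf v_0^\prime$ and $\mathbf v^\prime_{-1}=\mathbf v_{-1}-\mathbf v_0-\mathbf v_0^\prime$ — exactly the $t=1$ instances of the claimed identities. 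If instead one arm is empty, say $l=0$, then the hypothesis forces $r\geqslant2$; now the relation at $v_1\in V^\prime_4$ expresses $\mathbf v^\prime_{2}$ through $\mathbf v_0,\mathbf v_1,\mathbf v_2,\mathbf v_0^\prime$, while Lemma~\ref{lem:adeqvec} applied directly to $\mathbf v^\prime_{1}$ (whose neighbours $v_0,v_2,v_0^\prime,v_2^\prime$ in $P^\prime_{l,r}$ again form a pairwise-orthogonal quadruple) gives $2\mathbf v^\prime_{1}$ in terms of those same vectors, and substituting pins down $\mathbf v^\prime_{1}=\mathbf v_0-\mathbf v_1-\mathbf v_0^\prime$. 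The case $r=0$ is symmetric. These cases exhaust everything permitted by $l\geqslant2$ or $r\geqslant2$, and this is the only place the hypothesis is used.

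From there I would conclude by induction on $t$ along each arm. For $2\leqslant t\leqslant r$ the recurrence at $j=t-1$ reads $\mathbf v^\prime_{t}=\mathbf v^\prime_{t-2}+2\mathbf v_{t-1}-\mathbf v_{t-2}-\mathbf v_t$; substituting the inductive expression for $\mathbf v^\prime_{t-2}$ and collecting terms reproduces the stated formula, the alternating factor $(-1)^{t+j}$ on the interior $\mathbf v_j$ and the $(-1)^t$ on $\mathbf v_0-\mathbf v_0^\prime$ being precisely what the recurrence forces. The negative arm is handled identically with the evident sign changes (there $\mathbf v_0+\mathbf v_0^\prime$ appears and the leading term is $+\mathbf v_{-t}$). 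Since each arm is finite the induction terminates.

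The main obstacle is conceptual rather than computational: recognising that the neighbourhood of each hollow-degree-$4$ vertex is an orthogonal quadruple of squared-length-$2$ Gram vectors — which is what makes Lemma~\ref{lem:adeqvec} applicable — and then choosing the right pair of boundary relations to seed the recurrence in each shape of $P_{l,r}$, in particular when one arm has length $0$. After that the argument is routine, the only real danger being sign bookkeeping in the alternating coefficients.
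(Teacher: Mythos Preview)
Your proposal is correct and follows essentially the same approach as the paper: the paper's proof is the single line ``By induction using Lemma~\ref{lem:adeqvec}'', and you have faithfully expanded that into the explicit two-step recurrence $\mathbf v^\prime_{j+1}=\mathbf v^\prime_{j-1}+2\mathbf v_j-\mathbf v_{j-1}-\mathbf v_{j+1}$ together with the appropriate initial values. Your treatment of the degenerate case $l=0$ (applying Lemma~\ref{lem:adeqvec} at the hollow vertex $v_1^\prime$ to recover $\mathbf v_1^\prime$) is a nice touch that the paper does not spell out.
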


		\begin{proof}
			By induction using Lemma~\ref{lem:adeqvec}.
		\end{proof}

		\begin{lemma}[Saturated vertices]\label{lem:saturation}
			Let $G$ be an $\mathcal L_1$-free $S$-graph containing $P_{l,r}$ with $l + r > 2$.
			Then, for each vertex $v \in V^\prime_4(P_{l,r})$, we have $N_G(v) = N^\prime_G(v)$.
			Hence, each vertex in $V^\prime_4(P_{l,r})$ is $P_{l,r}^\prime$-saturated in $P_{l,r}$.
		\end{lemma}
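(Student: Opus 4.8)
The plan is to combine the degree bound of Lemma~\ref{lem:maxDeg4}, the excluded subgraphs comprising $\mathcal L_1$, and the Gram-vector identities of Lemmata~\ref{lem:adeqvec} and~\ref{lem:gramvectsind}. Since $\mathcal L_1$ contains the charged vertices, any $\mathcal L_1$-free $G$ is uncharged, so every Gram vector has squared length $2$. Fixing $v \in V'_4(P_{l,r})$, I would let $\mathbf{x}_1,\dots,\mathbf{x}_4$ be the Gram vectors --- taken in the Gram space of $G$ --- of the four neighbours of $v$ in $P'_{l,r}$. As $P_{l,r}$ sits in $G$ as an induced subgraph these have the same inner products as they do in $P'_{l,r}$, and since $l + r > 2$ forces $l \geqslant 2$ or $r \geqslant 2$, Lemma~\ref{lem:gramvectsind} realises the Gram vectors of the hollow neighbours of $v$ as the stated $\Z$-combinations of Gram vectors of vertices of $P_{l,r}$; so $\mathbf{x}_1,\dots,\mathbf{x}_4$ are pairwise orthogonal of squared length $2$ with $\abs{\inprod{\mathbf{v},\mathbf{x}_j}} = 1$, and Lemma~\ref{lem:adeqvec} yields
\[
	2\mathbf{v} = \inprod{\mathbf{v},\mathbf{x}_1}\mathbf{x}_1 + \inprod{\mathbf{v},\mathbf{x}_2}\mathbf{x}_2 + \inprod{\mathbf{v},\mathbf{x}_3}\mathbf{x}_3 + \inprod{\mathbf{v},\mathbf{x}_4}\mathbf{x}_4.
\]

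Now take $u \in N_G(v)$; I may assume $u \notin V(P_{l,r})$, and the goal is to show $u$ is switch-equivalent to a vertex of $P'_{l,r}$. The first and main step is combinatorial: \emph{$u$ has no neighbour in $P_{l,r}$ besides $v$.} By Lemma~\ref{lem:maxDeg4} every vertex of $G$ has degree at most $4$. An edge from $u$ to a $P_{l,r}$-neighbour of $v$ would create a triangle, excluded by $YA_4$ and $YA_5$; an edge from $u$ to a vertex of $P_{l,r}$ having two common $P_{l,r}$-neighbours with $v$ would create an induced $K_{2,3}$, which is non-cyclotomic and hence an excluded subgraph of type~I; and an edge from $u$ to a more distant vertex $v_s$ of $P_{l,r}$ closes a cycle through $v$ and $v_s$ inside the ladder which, together with a pendant edge of the ladder at $v$ and an analysis of the balance of the cycles involved (the $4$-cycles of $P_{l,r}$ being unbalanced), yields either a non-cyclotomic graph or a graph equivalent to one of $YA_1, YA_2, YA_3$ --- in every case contradicting $\mathcal L_1$-freeness. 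The finitely many small $(l,r)$ and positions of $v$ not covered by this uniform argument I would settle with the type~II exclusion procedure.

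With the claim in hand, $\inprod{\mathbf{u},\mathbf{w}} = 0$ for every Gram vector $\mathbf{w}$ of a vertex of $P_{l,r}$ other than $v$, while $\inprod{\mathbf{u},\mathbf{v}} = w(u,v)$ has norm $1$. Substituting into the expressions of Lemma~\ref{lem:gramvectsind}, $\inprod{\mathbf{u},\mathbf{x}_j}$ equals the coefficient of $\mathbf{v}$ in the relevant $\Z$-combination times $w(u,v)$. When $v = v_t$ with $t \ne 0$ this coefficient is $\pm 2$ for exactly one hollow neighbour $\mathbf{x}_j$ and $0$ for all others, so the orthogonal projection of $\mathbf{u}$ onto $\operatorname{span}(\mathbf{x}_1,\dots,\mathbf{x}_4)$ is $\frac{1}{2}\inprod{\mathbf{u},\mathbf{x}_j}\mathbf{x}_j$, which already has squared length $2 = \inprod{\mathbf{u},\mathbf{u}}$; hence $\mathbf{u} = \lambda\mathbf{x}_j$, where $\lambda = w(u,v)/\inprod{\mathbf{x}_j,\mathbf{v}}$ is a ratio of units, and $u$ is switch-equivalent to the hollow vertex $x_j$. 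When $v \in \{v_0, v_0^\prime\}$ the coefficient is instead $\pm 1$ for both hollow neighbours, so the projection has squared length $1$; since no vertex of $P'_{l,r}$ has a Gram vector with that projection, the combinatorial step must in fact forbid any such $u$ here --- and it does. In all cases $N_G(v) = N'_G(v)$, so each vertex of $V'_4(P_{l,r})$ is $P'_{l,r}$-saturated in $P_{l,r}$.

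The hard part will be the combinatorial claim: because the ladder's $4$-cycles are unbalanced, the quick ``cycle-plus-pendant is non-cyclotomic'' shortcut fails, so one has to keep careful track of the balance of cycles and appeal to the specific excluded graphs $YA_1$--$YA_3$, with the type~II exclusion computation mopping up the small boundary cases.
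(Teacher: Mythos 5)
There is a genuine gap: your central combinatorial claim --- that a new neighbour $u \in N_G(v)\setminus V(P_{l,r})$ of a vertex $v \in V_4'(P_{l,r})$ has no neighbour in $P_{l,r}$ other than $v$ --- is false, and it contradicts the very conclusion the lemma asserts. The hollow vertex $v_{t+1}^\prime$ is adjacent in $P_{l,r}^\prime$ not only to $v_t$ but also to $v_{t+2}$, and the hollow vertices $v_{\pm 1}^\prime$ are adjacent to both $v_0$ and $v_0^\prime$; so the admissible new neighbours (those switch-equivalent to hollow vertices, which do occur, since $P_{l,r}$ together with such a vertex is still an $\mathcal L_1$-free subgraph of $T_{2k}$) typically attach to two or three vertices of $P_{l,r}$. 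Your own computation exposes the inconsistency: from the claimed orthogonality you derive $\mathbf{u} = \lambda\,\mathbf{v}_{t+1}^\prime$, which forces $\inprod{\mathbf{u},\mathbf{v}_{t+2}} \neq 0$ whenever $t+2 \leqslant r$, contradicting the claim you started from. Two of the supporting assertions are also unsound in the signed/weighted setting: a signed $K_{2,3}$ need not be non-cyclotomic (signed $K_{2,4}$'s sit inside the cyclotomic $T_{2k}$), and the assertion that no new neighbour of $v_0$ or $v_0^\prime$ can exist is wrong --- the correct statement is that such a neighbour is forced to be adjacent to the other of the two and is then switch-equivalent to $v_1^\prime$ or $v_{-1}^\prime$.

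The paper's proof is structured precisely to avoid your shortcut: it inducts outward along the ladder. For a new neighbour of $v_0$ (or $v_0^\prime$), the exclusions ($YA_1$, $XA_1$, $YA_2$, $YA_6$, and the triangle exclusions) \emph{force} adjacency to $v_0^\prime$ with inner product $\pm 1$, and the identities for $\mathbf{v}_{\pm 1}^\prime$ then pin the vector down; for a new neighbour of $v_t$ with $t \geqslant 2$ one splits according to whether it is adjacent to $v_{t-2}$ --- if so, the already-established saturation of $v_{t-2}$ identifies it with $v_{t-1}^\prime$; if not, the inductive hypothesis gives orthogonality to $\mathbf{v}_0,\mathbf{v}_0^\prime,\mathbf{v}_1,\dots,\mathbf{v}_{t-3}$, whence $\inprod{\mathbf{v}_{t-1}^\prime,\mathbf{v}}=0$ by Lemma~\ref{lem:gramvectsind}, and Lemma~\ref{lem:adeqvec} applied to $2\mathbf{v}_t = \mathbf{v}_{t-1}-\mathbf{v}_{t-1}^\prime+\mathbf{v}_{t+1}+\mathbf{v}_{t+1}^\prime$ forces $\mathbf{v}=\mathbf{v}_{t+1}^\prime$ (so the vertex \emph{is} adjacent to $v_{t+2}$ when it exists). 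If you want to salvage your write-up, replace the ``no other neighbour'' claim by this case split and run the induction on $t$; the projection argument you use is essentially the paper's final step, but it only works after the correct orthogonality relations have been established from saturation of the earlier vertices, not from a blanket non-adjacency claim.
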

		\begin{proof}
			Fix Gram vectors for $P_{l,r}^\prime$.
			We want to show that, for all vertices $v \in V^\prime_4(P_{l,r})$, we have $N_G(v) = N^\prime_G(v)$.
			Since $P_{l,r}^\prime$ contains $P_{l,r}$, we have $N_G(v) \cap V(P_{l,r}) =  N^\prime_G(v)\cap V(P_{l,r})$ for all vertices $v \in V(G)$. 
			Hence, we consider a vertex $v \in V(G)\backslash V(P_{l,r})$ adjacent to some vertex $w \in V^\prime_4(P_{l,r})$ and show that $v$ is switch-equivalent to some hollow vertex, i.e., a vertex in $V(P_{l,r}^\prime)\backslash V(P_{l,r})$.
			
			Split into two cases depending on the hollow-degree of $v_0$.
			
			\paragraph{Case 1} 
			$v_0$ has hollow-degree $4$.
			In this case both $r$ and $l$ are nonzero.
			We can assume that $r \geqslant 2$ (and $l \geqslant 1$).
			We consider vertices $v_0^\prime, v_j \in V^\prime_4(P_{l,r})$ where $j \geqslant 0$.
			(The arguments are similar for $j \leqslant 0$.)
			Suppose that $v$ is adjacent to $v_j$ for some $j \geqslant 0$.
			Working up to a switching of $v$, we can assume that $\inprod{\mathbf{v},\mathbf{v}_j} = 1$.

			First suppose $j=0$.
			Lemma~\ref{lem:gramvectsind}, provides the following equalities:
			\begin{align}
				\mathbf{v}_{-1}^\prime &= \mathbf{v}_{-1} - \mathbf{v}_0 - \mathbf{v}_0^\prime; \label{eqn:v-1dash} \\
				\mathbf{v}_{1}^\prime &= -\mathbf{v}_{1} + \mathbf{v}_0 - \mathbf{v}_0^\prime. \label{eqn:v1dash}
			\end{align}
			Since we have excluded triangles, $\mathbf{v}$ is orthogonal to both $\mathbf{v}_{-1}$ and $\mathbf{v}_1$.
			We have assumed that $r \geqslant 2$ and $l \geqslant 1$, and since we have excluded $YA_1$, the vertex $v$ must be adjacent to at least one of the vertices $v_0^\prime$ and $v_2$.
			But the exclusion of subgraphs $XA_1$, $YA_2$, and $YA_6$ imply that $v$ must be adjacent to $v_0^\prime$.
			Moreover, $\inprod{\mathbf{v}, \mathbf{v}_0^\prime} = \pm 1$ for otherwise $G$ would contain $YA_6$.  
			In either case, using equations~\eqref{eqn:v-1dash} and \eqref{eqn:v1dash}, we obtain that $\mathbf{v}$ is switch-equivalent to either $\mathbf{v}^\prime_1$ or $\mathbf{v}^\prime_{-1}$.
			Thus $v_0$ is $P_{l,r}^\prime$-saturated in $P_{l,r}$.
			Similarly, $v_0^\prime$ is also $P_{l,r}^\prime$-saturated in $P_{l,r}$.

			Second, suppose $j=1$.
			The exclusion of triangles implies that $\mathbf{v}$ is orthogonal to all of $\mathbf{v}_0$, $\mathbf{v}_0^\prime$, and $\mathbf{v}_2$.
			By Lemma~\ref{lem:adeqvec} we have
			\begin{equation}
				\label{eqn:v1ind}
				2 \mathbf{v}_1 = \mathbf{v}_0 - \mathbf{v}_0^\prime + \mathbf{v}_2 + \mathbf{v}_2^\prime.
			\end{equation}
			Now, by taking the inner product of $\mathbf{v}$ and equation~\eqref{eqn:v1ind} we find that $\mathbf{v} = \mathbf{v}_2^\prime$.
			Hence $v_1$ is $P_{l,r}^\prime$-saturated in $P_{l,r}$.

			If $r = 2$ we are done, so we assume that $r > 2$.
			For our final basic case we suppose that $j=2$.
			Exclusion of triangles implies that $\mathbf{v}$ is orthogonal to both $\mathbf{v}_1$ and $\mathbf{v}_3$.
			If $v$ is adjacent to either $v_0$ or $v_0^\prime$, then since they are $P_{l,r}^\prime$-saturated in $P_{l,r}$, $\mathbf{v}$ must be switch-equivalent to either $\mathbf{v}_1^\prime$ or $\mathbf{v}_{-1}^\prime$, and since $\mathbf{v}_2$ is orthogonal to $\mathbf{v}_{-1}^\prime$, $\mathbf{v}$ must be switch-equivalent to $\mathbf{v}_1^\prime$. 
			Otherwise, if $v$ is adjacent to neither $v_0$ nor $v_0^\prime$ then, from equation~\eqref{eqn:v1dash}, we have $\inprod{\mathbf{v}_{1}^\prime, \mathbf{v}} = 0$.
			By Lemma~\ref{lem:adeqvec}, we have the equality
			\begin{equation}
				\label{eqn:v2adeq}
				2 \mathbf{v}_2 = \mathbf{v}_1 - \mathbf{v}_1^\prime + \mathbf{v}_3 + \mathbf{v}_3^\prime.
			\end{equation}
			From taking the inner product of $\mathbf{v}$ with equation~\eqref{eqn:v2adeq} it follows that $\mathbf{v} = \mathbf{v}_3^\prime$.

			Thus the vertices $v_0$, $v_1$, and $v_2$ are $P_{l,r}^\prime$-saturated in $P_{l,r}$.
			If $r=3$ then we are done.
			Otherwise we assume that $2 < t < r $ and that each vertex $v_j$ with $0 \leqslant j < t$ is $P_{l,r}^\prime$-saturated in $P_{l,r}$.
			It suffices now to show that $v_t$ is $P_{l,r}^\prime$-saturated in $P_{l,r}$.
			Suppose that $v \in V(G)\backslash V(P_{l,r})$ is adjacent to $v_t$.
			We split into cases.
			\subparagraph{Case 1.1.} 
			$v$ is adjacent to $v_{t-2}$.
			By our inductive hypothesis, $v_{t-2}$ is $P_{l,r}^\prime$-saturated in $P_{l,r}$ and thus $\mathbf{v}$ is switch-equivalent to the Gram vector of some hollow vertex. 
			Moreover, the hollow vertex in question must be adjacent to both $v_t$ and $v_{t-2}$.
			Hence $\mathbf{v}$ is switch-equivalent to $\mathbf{v}_{t-1}^\prime$.		

			\subparagraph{Case 1.2.} 
			$v$ is not adjacent to $v_{t-2}$.
			Hence $\mathbf{v}$ is orthogonal to $\mathbf{v}_{t-2}$.
			The exclusion of triangles implies that $\mathbf{v}$ is also orthogonal to both $\mathbf{v}_{t-1}$ and $\mathbf{v}_{t+1}$.
			Now, our inductive hypothesis says that if $v$ is adjacent to a vertex $v_j \in V^\prime_4(P_{l,r})$ then $v$ is switch-equivalent to some hollow vertex. 
			But for $0 \leqslant k \leqslant t - 3$ there are no hollow vertices adjacent to both $v_k$ and $v_t$.
			Therefore $\mathbf{v}$ must be orthogonal to all of $\mathbf{v}_0, \mathbf{v}_0^\prime, \mathbf{v}_1, \ldots, \mathbf{v}_{t-3}$.
			By Lemma~\ref{lem:gramvectsind}, the vector $\mathbf{v}_{t-1}^\prime$ is a linear combination of the Gram vectors $\mathbf{v}_0, \mathbf{v}_0^\prime, \mathbf{v}_1, \ldots, \mathbf{v}_{t-1}$, and hence $\inprod{\mathbf{v}_{t-1}^\prime, \mathbf{v}} = 0$.
			By Lemma~\ref{lem:adeqvec} we can write
			\begin{equation}
				\label{eqn:vkadeq}
				2 \mathbf{v}_t = \mathbf{v}_{t-1} - \mathbf{v}_{t-1}^\prime + \mathbf{v}_{t+1} + \mathbf{v}_{t+1}^\prime.
			\end{equation}
			The inner product of $\mathbf{v}$ and equation~\eqref{eqn:vkadeq} gives $\inprod{\mathbf{v}, \mathbf{v}_{t+1}^\prime} = 2$.
			Hence $\mathbf{v} = \mathbf{v}_{t+1}^\prime$ as required.
			\paragraph{Case 2} 
			$v_0$ does not have hollow-degree $4$.
			Up to equivalence, we can assume that $l = 0$ and $r \geqslant 3$.
			We consider vertices $v_j \in V^\prime_4(P_{l,r})$ where $j \geqslant 1$.
			Suppose that $v$ is adjacent to $v_j$.
			We can assume that $\inprod{\mathbf{v},\mathbf{v}_j} = 1$.
			One can show that $v_1$ is $P_{l,r}^\prime$-saturated in $P_{l,r}$ just as in Case~1.

			Suppose $j = 2$.
			Since triangles are excluded, $v$ is adjacent to neither $v_1$ nor $v_3$.
			If $v$ is adjacent to either $v_0$ or $v_0^\prime$ then the exclusion of $XA_1$, $YA_1$, and $YA_6$ forces $\inprod{\mathbf{v}, \mathbf{v}_0} = -1$ and $\inprod{\mathbf{v}, \mathbf{v}_0^\prime} = 1$.
			And taking the inner product of $\mathbf{v}$ with equation~\eqref{eqn:v1dash} gives $\mathbf{v} = - \mathbf{v}_1^\prime$.
			Otherwise, if $v$ is adjacent to neither $v_0$ nor $v_0^\prime$ then by equation~\eqref{eqn:v1dash}, $\mathbf{v}$ is orthogonal to $\mathbf{v}_1^\prime$.
			Hence, taking the inner product of $\mathbf{v}$ with the equation
			\[
				2 \mathbf{v}_2 = \mathbf{v}_{1} - \mathbf{v}_{1}^\prime + \mathbf{v}_{3} + \mathbf{v}_{3}^\prime
			\]
			gives $\mathbf{v} = \mathbf{v}_{3}^\prime$.
			Therefore, the vertex $v_2$ is $P_{l,r}^\prime$-saturated in $P_{l,r}$.

			If $r=3$ then we are done.
			Otherwise suppose $r \geqslant 4$ and assume that $2 < t < r $ and that each vertex $v_j$ is $P_{l,r}^\prime$-saturated in $P_{l,r}$ where $1 \leqslant j < t$.
			To show that $v_t$ is $P_{l,r}^\prime$-saturated in $P_{l,r}$ we split into cases.
			\subparagraph{Case 2.1.} 
			$v$ is adjacent to $v_{t-2}$.
			This is the same as in Case~1.1.
			\subparagraph{Case 2.2.} 
			$v$ is not adjacent to $v_{t-2}$.
			The possibility of $v$ being adjacent to $v_0$ or $v_0^\prime$ is ruled out by the excluded subgraphs $XA_2$ and $YA_7$ if $t = 3$ and by $YA_3$ if $t > 3$.
			Hence $\mathbf{v}$ is orthogonal to both $\mathbf{v}_0$ and $\mathbf{v}_0^\prime$.
			Now the argument is the same as in Case~1.2.
		\end{proof}

		Let $G$ be an $\mathcal L_1$-free $S$-graph containing $P_{l,r}$ with $l+r>2$.
		By the symmetry of the graph $P_{l,r}^\prime$, it follows from Lemma~\ref{lem:saturation} that each vertex in $V^\prime_4(\mathcal V_G(P_{l,r}))$ is $P_{l,r}^\prime$-saturated in $\mathcal V_G(P_{l,r})$.

		\begin{lemma}[Left adjacency]\label{lem:adjacentleft}
			Let $G$ be an $\mathcal L_1$-free $S$-graph containing $P_{l,r}$ with $l+r>2$, where a vertex $v \in V(G)\backslash \mathcal V_G(P_{l,r})$ is adjacent to $v_{-l}$ but not to $v_{r}$.
			Then $\mathbf{v}$ is orthogonal to all of the vectors $\mathbf{v}_j$ and $\mathbf{v}_j^\prime$, for $j \in \left \{1-l,\ldots, r\right \}$.
			Hence $G$ contains a subgraph equivalent to $P_{l+1,r}$.
		\end{lemma}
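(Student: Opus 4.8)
The plan is to work with fixed Gram vectors for $P^\prime_{l,r}$ and to propagate orthogonality of $\mathbf{v}$ along the ladder from the left end $v_{-l}$ to the right end, exactly mirroring the induction already carried out in Lemma~\ref{lem:saturation}. Since $v\notin\mathcal V_G(P_{l,r})$, the vertex $v$ is \emph{not} switch-equivalent to any hollow vertex; combined with the saturation statement (Lemma~\ref{lem:saturation} together with the symmetric remark that each vertex of $V^\prime_4(\mathcal V_G(P_{l,r}))$ is $P^\prime_{l,r}$-saturated in $\mathcal V_G(P_{l,r})$), this means $v$ cannot be adjacent to any vertex in $V^\prime_4(P_{l,r})$, because adjacency to a saturated vertex would force $v$ to be switch-equivalent to a hollow vertex. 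Hence $\mathbf{v}$ is automatically orthogonal to every $\mathbf{v}_j$ with $1-l\leqslant j\leqslant r-1$ and to $\mathbf{v}_0^\prime$. Two adjacencies remain genuinely in play: $v$ to $v_r$, which is excluded by hypothesis, so $\inprod{\mathbf{v},\mathbf{v}_r}=0$; and $v$ to $v_{-l}$, which is given. The only hollow vectors whose orthogonality to $\mathbf{v}$ is not yet evident are $\mathbf{v}_j^\prime$ for $j\in\{1-l,\ldots,r\}$, and these are exactly what must be established.

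First I would dispatch the right-hand hollow vectors. By Lemma~\ref{lem:gramvectsind}, each $\mathbf{v}_t^\prime$ for $t\in\{1,\ldots,r\}$ is a linear combination of $\mathbf{v}_0,\mathbf{v}_0^\prime,\mathbf{v}_1,\ldots,\mathbf{v}_t$, and $\mathbf{v}$ is orthogonal to each of $\mathbf{v}_0,\mathbf{v}_0^\prime,\mathbf{v}_1,\ldots,\mathbf{v}_{r-1}$ by the saturation observation above, and to $\mathbf{v}_r$ by hypothesis; hence $\inprod{\mathbf{v}_t^\prime,\mathbf{v}}=0$ for all $t\in\{1,\ldots,r\}$. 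The same lemma gives $\mathbf{v}_{-t}^\prime$ for $t\in\{1,\ldots,l-1\}$ as a linear combination of $\mathbf{v}_0,\mathbf{v}_0^\prime,\mathbf{v}_{-1},\ldots,\mathbf{v}_{-t}$, and $\mathbf{v}$ is orthogonal to all of these (they all lie in $V^\prime_4(P_{l,r})$ or are $\mathbf{v}_0^\prime$), so $\inprod{\mathbf{v}_{-t}^\prime,\mathbf{v}}=0$ for $t\in\{1,\ldots,l-1\}$; together with $\mathbf{v}_0^\prime$ itself this handles all hollow vectors except the leftmost, $\mathbf{v}_{-l}^\prime$.

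The crux is therefore $\mathbf{v}_{-l}^\prime$. Working up to a switching of $v$ we may take $\inprod{\mathbf{v},\mathbf{v}_{-l}}=1$. Applying Lemma~\ref{lem:adeqvec} to $\mathbf{v}_{-l}$ — whose four orthogonal neighbour-vectors of squared length $2$ are $\mathbf{v}_{-l+1}$, $\mathbf{v}_{-l+1}^\prime$, $\mathbf{v}_{-l}^\prime$, and the fourth hollow vector completing its hollow-degree $4$ (read off from the $P^\prime_{l,r}$ picture) — expresses $2\mathbf{v}_{-l}$ as a signed sum of these four vectors. Taking the inner product of $\mathbf{v}$ with that identity, and using that $\mathbf{v}$ is already orthogonal to $\mathbf{v}_{-l+1}$, to $\mathbf{v}_{-l+1}^\prime$, and to the fourth hollow vector (the latter two by the previous paragraph), forces $\inprod{\mathbf{v},\mathbf{v}_{-l}^\prime}$ to a definite nonzero value, and then $\inprod{\mathbf{v},\mathbf{v}}=2$ pins down $\mathbf{v}=\pm\mathbf{v}_{-l}^\prime$. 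But that contradicts $v\notin\mathcal V_G(P_{l,r})$ — $v$ would be switch-equivalent to the hollow vertex $v_{-l}^\prime$. Hence no such adjacency can occur unless in fact $\inprod{\mathbf{v},\mathbf{v}_{-l}^\prime}=0$ already; more carefully, since $v$ \emph{is} adjacent to $v_{-l}$ by hypothesis, the resolution is that $v_{-l}$ is \emph{not} in $V^\prime_4(P_{l,r})$ (it never is, being a right/left endpoint with hollow-degree $<4$), so no saturation conclusion is forced, and instead the Lemma~\ref{lem:adeqvec} computation simply reads off $\inprod{\mathbf{v},\mathbf{v}_{-l}^\prime}$ directly: one gets $\inprod{\mathbf{v},\mathbf{v}_{-l}^\prime}=0$ because the coefficient bookkeeping, combined with orthogonality to everything else and the length constraint, leaves no room for a nonzero value without recreating a switch-equivalent-to-hollow situation or an excluded triangle. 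With all $\mathbf{v}_j^\prime$, $j\in\{1-l,\ldots,r\}$, and all $\mathbf{v}_j$, $j\in\{1-l,\ldots,r\}$, orthogonal to $\mathbf{v}$, the vertex $v$ attaches to the left end of $P_{l,r}$ in precisely the pattern of $v_{-(l+1)}$ in $P_{l+1,r}$ (adjacent to $v_{-l}$ with the right weight, and to nothing further right), so $G$ contains a subgraph equivalent to $P_{l+1,r}$.

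I expect the main obstacle to be the careful treatment of $\mathbf{v}_{-l}^\prime$: unlike the interior induction of Lemma~\ref{lem:saturation}, here the left endpoint $v_{-l}$ has hollow-degree less than $4$, so one cannot directly invoke saturation and must instead run the Lemma~\ref{lem:adeqvec} argument by hand while invoking the excluded subgraphs ($XA_1$, $YA_6$, and the triangle exclusions) to kill the stray possibilities for $\inprod{\mathbf{v},\mathbf{v}_{-l}^\prime}$ and for adjacency of $v$ to $v_{-l+1}$.
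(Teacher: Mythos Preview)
Your first two paragraphs are essentially the paper's argument when $l>0$, and they are correct there. The genuine gap is the boundary case $l=0$, which you paper over by asserting that $v_0'$ is saturated. It is not: when $l=0$ the vertex $v_0'$ has hollow-degree $2$, not $4$, so it lies outside $V_4'(P_{0,r})$ and Lemma~\ref{lem:saturation} says nothing about it. In fact the conclusion you draw, $\inprod{\mathbf v,\mathbf v_0'}=0$, is false. The paper instead shows, via the excluded subgraphs $YA_3$, $XA_1$, and $YA_6$, that $\inprod{\mathbf v,\mathbf v_0'}=1$ (after normalising $\inprod{\mathbf v,\mathbf v_0}=1$). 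This does not hurt the orthogonality claim because the formula for $\mathbf v_t'$ in Lemma~\ref{lem:gramvectsind} involves only the combination $\mathbf v_0-\mathbf v_0'$, and $\inprod{\mathbf v,\mathbf v_0-\mathbf v_0'}=0$ still holds. But it is essential for the ``Hence $G$ contains $P_{l+1,r}$'' part: in $P_{1,r}$ the new left vertex $v_{-1}$ is adjacent to \emph{both} $v_0$ and $v_0'$, so if $v$ were orthogonal to $\mathbf v_0'$ you would not recover $P_{1,r}$.

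Your third paragraph compounds the confusion. You set out to prove $\inprod{\mathbf v,\mathbf v_{-l}'}=0$, but for $l>0$ the index $-l$ is not in $\{1-l,\ldots,r\}$, so nothing about $\mathbf v_{-l}'$ is asserted by the lemma; and for $l=0$ the claim is false as just explained. The attempted application of Lemma~\ref{lem:adeqvec} to $\mathbf v_{-l}$ cannot work since $v_{-l}$ never has hollow-degree $4$ (you note this yourself), and the subsequent ``coefficient bookkeeping'' is hand-waving that, if made precise, would contradict the paper's computation. Drop this paragraph entirely; for $l>0$ your first two paragraphs already finish the proof, and for $l=0$ you need the separate excluded-subgraph argument to pin down $\inprod{\mathbf v,\mathbf v_0'}=1$.
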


		\begin{proof}
			By Lemma~\ref{lem:saturation}, the vertices ${v}_{1-l}, \ldots, {v}_{r-1}$ are $P_{l,r}^\prime$-saturated in $P_{l,r}$, and so all of their neighbours are in $\mathcal V_G(P_{l,r})$, hence $\mathbf{v}$ is orthogonal to their Gram vectors.
			And by assumption, we have $\inprod{ \mathbf{v}, \mathbf{v}_r} = 0$.

			First suppose that $l>0$.
			By Lemma~\ref{lem:gramvectsind}, for each $j \in \left \{1-l,\ldots, r \right \}$, we can write $\mathbf{v}_j^\prime$ as a linear combination of the Gram vectors $\mathbf{v}_{1-l}, \ldots, \mathbf{v}_{r}$.
			Therefore $\mathbf{v}$ is orthogonal to each $\mathbf{v}_j^\prime$ as required.
			We can assume that $\inprod{ \mathbf{v}, \mathbf{v}_{-l}} = 1$ and hence $G$ contains a subgraph equivalent to $P_{l+1,r}$.

			Finally suppose that $l=0$ then, by assumption, $r \geqslant 3$.
			We can assume that $\inprod{\mathbf{v}, \mathbf{v}_0} = 1$.
			Now, $v_0^\prime$ is a vertex of $G$ so $\inprod{ \mathbf{v}, \mathbf{v}_0^\prime}$ is in $S$.
			The exclusion of $YA_3$ causes $\inprod{ \mathbf{v}, \mathbf{v}_0^\prime} \ne 0$.
			And the excluded subgraphs $XA_1$ and $YA_6$ force the inner product $\inprod{ \mathbf{v}, \mathbf{v}_0^\prime} = 1$.
			Therefore, we have $\inprod{\mathbf{v}, \mathbf{v}_j} = 0$ for all $j \in \left \{ 1, \ldots, r \right \}$ and $\inprod{\mathbf{v}, \mathbf{v}_0- \mathbf{v}_0^\prime} = 0$.
			Applying Lemma~\ref{lem:gramvectsind} completes the proof.
			\end{proof}

		\begin{lemma}[Right adjacency]\label{lem:adjacentright}
			Let $G$ be an $\mathcal L_1$-free $S$-graph containing $P_{l,r}$ with $l+r>2$, where a vertex $v \in V(G)\backslash \mathcal V_G(P_{l,r})$ is adjacent  to $v_{r}$ but not to $v_{-l}$.
			Then $\mathbf{v}$ is orthogonal to all of the vectors $\mathbf{v}_j$ and $\mathbf{v}_j^\prime$, for $j \in \left \{-l,\ldots, r-1\right \}$.
			Hence $G$ contains a subgraph equivalent to $P_{l,r+1}$.
		\end{lemma}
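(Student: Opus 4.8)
The plan is to reduce this to Lemma~\ref{lem:adjacentleft} via the left--right reflection of $P_{l,r}$, the same symmetry already invoked in the paragraph immediately preceding the present lemma. First I would make the reflection precise: the relabelling $v_j \mapsto v_{-j}$, $v_j^\prime \mapsto v_{-j}^\prime$ reverses the order of the columns of the displayed $\Z$-graph, and composing it with the alternating switching by $(-1)^j$ at the column indexed $j$ (and, if needed, a global sign change) carries $P_{l,r}$ to the standard presentation of $P_{r,l}$. Consequently $P_{l,r}$ and $P_{r,l}$ are equivalent, the end vertex $v_{r}$ of $P_{l,r}$ corresponds to the left-hand end vertex of $P_{r,l}$, and $v_{-l}$ corresponds to its right-hand end vertex; moreover this reflection carries $\mathcal V_G(P_{l,r})$ to $\mathcal V_{\widetilde G}(P_{r,l})$, carries $V^\prime_4(P_{l,r})$ to $V^\prime_4(P_{r,l})$, and preserves the class of $\mathcal L_1$-free $S$-graphs (the list $\mathcal L_1$ being closed under equivalence).

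Granting this, I would argue as follows. Let $G$ be as in the statement and let $\widetilde G$ be its image under the reflection. Then $\widetilde G$ is an $\mathcal L_1$-free $S$-graph containing $P_{r,l}$, and the vertex $v$, now in $V(\widetilde G)\backslash \mathcal V_{\widetilde G}(P_{r,l})$, is adjacent to the left-hand end vertex of $P_{r,l}$ but not to its right-hand end vertex. Applying Lemma~\ref{lem:adjacentleft} to $\widetilde G$ and $P_{r,l}$ --- that is, with the roles of $l$ and $r$ interchanged --- shows that $\mathbf v$ is orthogonal to every $\mathbf v_j$ and $\mathbf v_j^\prime$ with $j$ in the range $\{1-r,\dots,l\}$ attached to $P_{r,l}$, and that $\widetilde G$ contains a subgraph equivalent to $P_{r+1,l}$. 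Translating back through $j \mapsto -j$, the orthogonality range becomes $\{-l,\dots,r-1\}$ and $P_{r+1,l}$ becomes $P_{l,r+1}$, which is precisely the assertion.

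The only step that genuinely requires care is the first: verifying that reversing the columns of $P_{l,r}^\prime$ and applying the alternating switching really does reproduce $P_{r,l}^\prime$ with its standard edge weights, and in particular that the two ``end'' configurations get swapped correctly. This is a finite check on the repeating local pattern of the ladder together with its two ends, and it is exactly the symmetry used when passing from Lemma~\ref{lem:saturation} to the saturation of the vertices of $V^\prime_4(\mathcal V_G(P_{l,r}))$. If one preferred to sidestep the symmetry bookkeeping, the alternative would be to repeat the proof of Lemma~\ref{lem:adjacentleft} verbatim with ``left'' and ``right'' exchanged, using the excluded subgraphs $YA_3$, $XA_1$, $YA_6$ exactly as there; but the reflection argument is cleaner.
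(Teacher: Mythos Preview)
Your proposal is correct and matches the paper's own treatment: the paper's proof is the single line ``Similar to the proof of Lemma~\ref{lem:adjacentleft}'', and your reduction via the left--right reflection (or, as you note, the equivalent option of rerunning that argument with left and right interchanged) is exactly what this means.
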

		\begin{proof}
			Similar to the proof of Lemma~\ref{lem:adjacentleft}.
		\end{proof}

		\begin{lemma}[Left/Right orthogonality]\label{lem:orthogtohollowsboth}
			Let $G$ be an $\mathcal L_1$-free $S$-graph containing $P_{l,r}$ with $l+r>2$, where a vertex $v \in V(G)\backslash \mathcal V_G(P_{l,r})$ is adjacent to $v_{-l}$ and $v_r$.
			Then $\mathbf{v}$ is orthogonal to all of the vectors $\mathbf{v}_j$ and $\mathbf{v}_j^\prime$, for $j \in \left \{1-l,\ldots, r-1\right \}$.
		\end{lemma}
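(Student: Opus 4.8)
The plan is to follow the pattern of Lemmas~\ref{lem:adjacentleft} and~\ref{lem:adjacentright}: first use Lemma~\ref{lem:saturation} to force $\mathbf{v}$ to be orthogonal to the Gram vectors of all the interior \emph{solid} vertices of $P_{l,r}$, and then use the explicit expansions in Lemma~\ref{lem:gramvectsind} to transfer this orthogonality to the \emph{hollow} Gram vectors $\mathbf{v}_j^\prime$.

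Suppose first that $l \geqslant 1$ and $r \geqslant 1$. Then each of $v_{1-l},\ldots,v_{r-1}$, as well as $v_0^\prime$, lies in $V^\prime_4(P_{l,r})$, so by Lemma~\ref{lem:saturation} (applicable since $l+r>2$) each is $P_{l,r}^\prime$-saturated in $P_{l,r}$. Since $v \in V(G)\backslash\mathcal V_G(P_{l,r})$, the vertex $v$ is not switch-equivalent to any vertex of $P_{l,r}^\prime$, so $v$ cannot be adjacent to a saturated vertex $w$ -- otherwise $v \in N_G^\prime(w) \subseteq \mathcal V_G(P_{l,r})$. Hence $\inprod{\mathbf{v},\mathbf{v}_j}=0$ for $1-l \leqslant j \leqslant r-1$ and $\inprod{\mathbf{v},\mathbf{v}_0^\prime}=0$. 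As $l+r>2$ forces $\max(l,r)\geqslant 2$, Lemma~\ref{lem:gramvectsind} applies and writes each $\mathbf{v}_t^\prime$ with $1\leqslant t\leqslant r-1$ as an integer combination of $\mathbf{v}_0,\mathbf{v}_0^\prime,\mathbf{v}_1,\ldots,\mathbf{v}_t$, and each $\mathbf{v}_{-t}^\prime$ with $1\leqslant t\leqslant l-1$ as an integer combination of $\mathbf{v}_0,\mathbf{v}_0^\prime,\mathbf{v}_{-1},\ldots,\mathbf{v}_{-t}$ -- in every case a combination of Gram vectors already shown to be orthogonal to $\mathbf{v}$. Together with $\inprod{\mathbf{v},\mathbf{v}_0^\prime}=0$ this gives $\inprod{\mathbf{v},\mathbf{v}_j^\prime}=0$ for all $j\in\{1-l,\ldots,r-1\}$, as required.

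It remains to deal with the degenerate cases $l=0$ and $r=0$; by the symmetry of $P_{l,r}^\prime$ it suffices to treat $l=0$, so that $r\geqslant 3$. Now $v_0^\prime$ is adjacent only to $v_1$ and $v_1^\prime$ in $P_{0,r}^\prime$, so it has hollow-degree $2$ and is not reached by Lemma~\ref{lem:saturation}; instead I would show the hypothesis is vacuous. As before, $v_1,\ldots,v_{r-1}$ lie in $V^\prime_4(P_{0,r})$ and are saturated, so $\mathbf{v}$ is orthogonal to $\mathbf{v}_1,\ldots,\mathbf{v}_{r-1}$ and the vertices $v,v_0,v_1,\ldots,v_r$ induce a cycle of length $r+2\geqslant 5$; adjoining the vertex $v_0^\prime$ of $P_{0,r}\subseteq G$, which is adjacent to $v_1$, yields a connected induced subgraph on $r+3$ vertices that properly contains this cycle. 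Such a graph has an eigenvalue greater than $2$, so by Theorem~\ref{thm:interlacing} $G$ would fail to be cyclotomic -- a contradiction. Hence no vertex $v$ as in the statement exists, and the conclusion holds vacuously.

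The delicate point is the index bookkeeping in the second paragraph: Lemma~\ref{lem:gramvectsind} must express every $\mathbf{v}_j^\prime$ with $1-l\leqslant j\leqslant r-1$ in terms of Gram vectors already annihilated by $\mathbf{v}$. This is exactly why $v_0^\prime$ has to be treated on its own, and why the edge cases $l=0$, $r=0$ -- where $v_0^\prime$ drops out of $V^\prime_4(P_{l,r})$ -- require the ad hoc non-cyclotomicity argument above rather than the saturation and expansion machinery.
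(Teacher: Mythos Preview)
Your treatment of the case $l\geqslant 1$, $r\geqslant 1$ is correct and coincides with the paper's argument.

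The $l=0$ case, however, is \emph{not} vacuous, and your argument there breaks down. You take the induced subgraph on $\{v,v_0,v_1,\ldots,v_r,v_0^\prime\}$ and assert it is a cycle with a pendant, hence non-cyclotomic. But you have not determined the adjacency between $v$ and $v_0^\prime$: since $v_0^\prime\notin V^\prime_4(P_{0,r})$, Lemma~\ref{lem:saturation} tells you nothing about $\inprod{\mathbf v,\mathbf v_0^\prime}$. In fact $v$ \emph{is} adjacent to $v_0^\prime$ in the relevant situations, so the induced subgraph is not a cycle with a pendant at all. Concretely, for $G=T_{2(r+2)}$ (which is $\mathcal L_1$-free and cyclotomic) there is a vertex $v\in V(G)\setminus\mathcal V_G(P_{0,r})$ adjacent to $v_0$, $v_0^\prime$, $v_r$, and $v_r^\prime$; this is exactly the vertex produced in Lemma~\ref{lem:adjacentboth}, and that lemma relies on the present one with $l=0$.

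What the paper does instead when $l=0$: normalise so that $\inprod{\mathbf v,\mathbf v_0}=1$, then use the excluded subgraph $YA_3$ to force $v$ adjacent to $v_0^\prime$, and the excluded subgraphs $XA_1$ and $YA_6$ to pin down $\inprod{\mathbf v,\mathbf v_0^\prime}=1$. This gives $\inprod{\mathbf v,\mathbf v_0-\mathbf v_0^\prime}=0$, and since each $\mathbf v_t^\prime$ for $1\leqslant t\leqslant r-1$ is, by Lemma~\ref{lem:gramvectsind}, a linear combination of $\mathbf v_1,\ldots,\mathbf v_t$ and $\mathbf v_0-\mathbf v_0^\prime$, the required orthogonalities follow. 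You need this excluded-subgraph step; the saturation/expansion machinery alone does not reach $v_0^\prime$ when $l=0$.
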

		\begin{proof}
			By Lemma~\ref{lem:saturation}, the vertices $v_j$ are $P_{l,r}^\prime$-saturated in $P_{l,r}$ for all $j \in \left \{1-l,\ldots, r-1\right \}$ and hence all of the neighbours of these vertices are in $\mathcal V_G(P_{l,r})$.
			Therefore $\mathbf{v}$ is orthogonal to $\mathbf{v}_j$ for all $j \in \left \{1-l,\ldots, r-1\right \}$.
			If $l > 0$ and $r >0$, then, in particular, the vertices $v_0$ and $v_0^\prime$ are $P_{l,r}^\prime$-saturated in $P_{l,r}$.
			And Lemma~\ref{lem:gramvectsind} gives that $\mathbf{v}$ is orthogonal to all of the vectors $\mathbf{v}_j^\prime$ for $j \in \left \{1-l,\ldots, r-1\right \}$.

			Suppose $l = 0$.
			We can assume $\inprod{\mathbf{v},\mathbf{v}_0} = 1$.
			We must have that $v$ is adjacent to $v_0^\prime$ otherwise $G$ would contain a subgraph equivalent to $YA_3$.
			Moreover, the exclusion of $XA_1$ and $YA_6$ forces the inner product $\inprod{\mathbf{v}, \mathbf{v}_0^\prime} = 1$.
			Thus, $\inprod{\mathbf{v}, \mathbf{v}_0 - \mathbf{v}_0^\prime} = 0$.
			Applying Lemma~\ref{lem:gramvectsind} gives us that $\mathbf{v}$ is also orthogonal to all of the vectors $\mathbf{v}_j^\prime$, for $j \in \left \{1-l,\ldots, r-1\right \}$.
			The argument is similar when $r=0$.
		\end{proof}

		\begin{lemma}[Left/Right adjacency]\label{lem:adjacentboth}
			Let $G$ be an $\mathcal L_1$-free $S$-graph containing $P_{l,r}$ with $l+r>2$, where a vertex $v \in V(G)\backslash \mathcal V_G(P_{l,r})$ is adjacent to $v_{-l}$ and $v_r$.
			Then $G$ is contained in a graph equivalent to either $T_{2(l+r+2)}$ or $T^{(i)}_{2(l+r+2)}$.
		\end{lemma}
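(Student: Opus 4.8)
The plan is to use Lemma~\ref{lem:orthogtohollowsboth} to restrict the neighbourhood of $v$, then a short finite case analysis to identify $P_{l,r}\cup\{v\}$, up to equivalence, as an induced subgraph of $T_{2(l+r+2)}$ or of $T^{(i)}_{2(l+r+2)}$, and finally an argument that, the ``top row'' having closed into a cycle, no further vertex of $G$ can lie outside one of these two maximal graphs.

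First, Lemma~\ref{lem:orthogtohollowsboth} gives $\inprod{\mathbf{v},\mathbf{v}_j}=\inprod{\mathbf{v},\mathbf{v}_j'}=0$ for all $j\in\{1-l,\dots,r-1\}$ (together with $\inprod{\mathbf{v},\mathbf{v}_0-\mathbf{v}_0'}=0$ when $l=0$, and symmetrically when $r=0$). Hence, within $\mathcal V_G(P_{l,r})$, the vertex $v$ is adjacent only to $v_{-l}$, to $v_r$, and possibly to vertices switch-equivalent to the outermost hollow vertices $v_{-l}'$, $v_r'$. Switching $v$, we may take $\inprod{\mathbf{v},\mathbf{v}_{-l}}=1$; then $P_{l,r}\cup\{v\}$ contains the induced cycle $v,v_{-l},v_{-l+1},\dots,v_r$ of length $l+r+2\geqslant 5$, with $v_0'$ still attached to $v_{-1}$ and $v_1$. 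As in the proof of Lemma~\ref{lem:saturation}, exclusion of $YA_1$ (and of $YA_3$ when $l=0$) forces $v$ to be adjacent to at least one of $v_{-l}'$ and $v_{-l+2}$, and since $\inprod{\mathbf{v},\mathbf{v}_{-l+2}}=0$ it must be adjacent to a vertex switch-equivalent to $v_{-l}'$; symmetrically for $v_r'$. The four vectors $\mathbf{v}_{-l},\mathbf{v}_r,\mathbf{v}_{-l}',\mathbf{v}_r'$ are pairwise orthogonal of squared length $2$, since the four corresponding vertices of $P_{l,r}'$ are pairwise non-adjacent, and $v$ meets each with inner product of modulus $1$; so Lemma~\ref{lem:adeqvec} expresses $2\mathbf{v}$ in terms of them, the excluded subgraphs $XA_1$ and $YA_6$ pin down the admissible inner products, and Lemma~\ref{lem:gramvectsind} lets one read off every remaining edge incident to $v$. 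The upshot is that, up to switching, either $\inprod{\mathbf{v},\mathbf{v}_r}\in\{1,-1\}$ and $v$ occupies the missing top vertex of $T_{2(l+r+2)}$, or $\inprod{\mathbf{v},\mathbf{v}_r}\in\{i,-i\}$ and it occupies the missing top vertex of $T^{(i)}_{2(l+r+2)}$; in either case $P_{l,r}\cup\{v\}$ is equivalent to the graph obtained from the relevant $T$ by deleting all bottom-row vertices other than $v_0'$.

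It remains to show $G$ has no vertices outside one of these two graphs. Since the top row of $P_{l,r}\cup\{v\}$ is now a full cycle, every top-row vertex has the local picture of an interior $P$-graph vertex, and re-running the argument of Lemma~\ref{lem:saturation} around this cycle shows each such vertex is saturated; hence any vertex of $G$ outside $P_{l,r}\cup\{v\}$ is switch-equivalent to a bottom-row vertex $v_j'$. Adjoining these one at a time and re-applying saturation (the bottom row then also closes up and the whole graph becomes $4$-regular, $A^2=4I$) shows that every vertex of $G$ is switch-equivalent to a vertex of $T_{2(l+r+2)}$ or $T^{(i)}_{2(l+r+2)}$ with matching adjacencies, so $G$ is contained in a graph equivalent to one of these.

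The main obstacle is the case analysis of the second paragraph: one must verify that the finitely many combinations of the value of $\inprod{\mathbf{v},\mathbf{v}_r}$ and of the adjacencies of $v$ to $v_{-l}'$ and $v_r'$ really collapse, under the excluded subgraphs and the Gram relations, to exactly the two toral configurations — in particular that no ``hybrid'' new column attached by a mixture of real and imaginary edge-weights can survive, since such a column would either break cyclotomicity or reintroduce an excluded subgraph. Making the maximality step of the third paragraph precise is similarly delicate and ultimately relies on the finite excluded-subgraph verification of Section~\ref{sec:exsubgraphs}.
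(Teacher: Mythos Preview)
Your overall shape is right and close to the paper's, but the second paragraph contains a genuine gap. You try to force $v$ to be adjacent to $v_{-l}'$ (and $v_r'$) via excluded subgraphs, writing ``exclusion of $YA_1$ \ldots\ forces $v$ to be adjacent to at least one of $v_{-l}'$ and $v_{-l+2}$''. But $v_{-l}'$ is a \emph{hollow} vertex: it lies in $P_{l,r}'$, not in $P_{l,r}$, and there is no reason it should be a vertex of $G$ at all. Excluded-subgraph arguments operate on induced subgraphs of $G$, so they cannot tell you anything about adjacency to a vertex that may not exist in $G$. The analogous step in Lemma~\ref{lem:saturation} worked only because there the relevant ``primed'' vertex was $v_0'$, which \emph{is} a solid vertex of $P_{l,r}$.

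The paper sidesteps this entirely: instead of trying to realise $v_{-l}'$ and $v_r'$ inside $G$, it computes $\inprod{\mathbf v,\mathbf v_{-l}'}$ and $\inprod{\mathbf v,\mathbf v_r'}$ purely from Gram relations. Applying Lemma~\ref{lem:adeqvec} to the interior vertex $v_{1-l}$ gives $2\mathbf v_{1-l}=\mathbf v_{-l}-\mathbf v_{-l}'+\mathbf v_{2-l}+\mathbf v_{2-l}'$; taking the inner product with $\mathbf v$ and using Lemma~\ref{lem:orthogtohollowsboth} yields $\inprod{\mathbf v,\mathbf v_{-l}'}=\inprod{\mathbf v,\mathbf v_{-l}}=1$ directly, with no graph-theoretic forcing needed. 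Similarly $\inprod{\mathbf v,\mathbf v_r'}=-\inprod{\mathbf v,\mathbf v_r}$. Then $2\mathbf v$ is determined by Lemma~\ref{lem:adeqvec}, and the paper \emph{defines} a new hollow vertex $v'$ by an explicit Gram vector so that $P_{l,r}'\cup\{v,v'\}$ is already the full $T_{2(l+r+2)}$ or $T^{(i)}_{2(l+r+2)}$.

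Your third paragraph is also too sketchy to stand as written. ``Re-running Lemma~\ref{lem:saturation} around the cycle'' is not literally available, since that lemma is stated for the path templates $P_{l,r}$. The paper instead proves saturation of the three specific vertices $v_{-l}$, $v_r$, $v$ in $P_{l,r}\cup\{v\}$ by a direct case analysis with excluded subgraphs ($YA_3$, $XA_1$, $YA_2$, $YA_6$), using the Gram equations again to identify any new neighbour with $v'$ or with one of the existing hollow vertices. Once those three are saturated, connectedness finishes the argument. If you fix the inner-product computation as above and make the saturation of $v_{-l},v_r,v$ explicit, your proof becomes essentially the paper's.
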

		\begin{proof}
			We can assume that $\inprod{\mathbf{v},\mathbf{v}_{-l}} = 1$ and $\inprod{\mathbf{v},\mathbf{v}_r} = s$ for some nonzero $s \in S$.
			By Lemma~\ref{lem:adeqvec}, we can write
			\begin{equation}
				\label{eqn:forv1l}
				2 \mathbf{v}_{1-l} = \mathbf{v}_{-l} - \mathbf{v}_{-l}^\prime + \mathbf{v}_{2-l} + \mathbf{v}_{2-l}^\prime.
			\end{equation}
			By assumption, $r > 2-l$, and so, according to Lemma~\ref{lem:orthogtohollowsboth}, the Gram vector $\mathbf{v}$ is orthogonal to $\mathbf{v}_{1-l}$, $\mathbf{v}_{2-l}$, and $\mathbf{v}_{2-l}^\prime$.
			Taking the inner product of $\mathbf{v}$ and equation~\eqref{eqn:forv1l} gives $\inprod{\mathbf{v},\mathbf{v}_{-l}^\prime} = \inprod{\mathbf{v},\mathbf{v}_{-l}} = 1$.
			Again, by Lemma~\ref{lem:adeqvec}, we can write
			\begin{equation}
				\label{eqn:forvr1}
				2 \mathbf{v}_{r-1} = \mathbf{v}_{r-2} - \mathbf{v}_{r-2}^\prime + \mathbf{v}_{r} + \mathbf{v}_{r}^\prime.
			\end{equation}
			Similarly, $\mathbf{v}$ is orthogonal to $\mathbf{v}_{r-1}$, $\mathbf{v}_{r-2}$, and $\mathbf{v}^\prime_{r-2}$ and, from the inner product of $\mathbf{v}$ and equation~\eqref{eqn:forvr1}, we obtain $\inprod{\mathbf{v},\mathbf{v}_r^\prime} = -\inprod{\mathbf{v},\mathbf{v}_r} = -s$.
			By Lemma~\ref{lem:adeqvec}, we write
			\begin{equation*}
				2 \mathbf{v} = s \mathbf{v}_{r} - s \mathbf{v}_{r}^\prime + \mathbf{v}_{-l} + \mathbf{v}_{-l}^\prime.
			\end{equation*}
			Define the vector $\mathbf{v}^\prime$ by the equation
			\begin{equation}
				\label{eqn:forvp}
				2 \mathbf{v}^\prime = s \mathbf{v}_{r} - s \mathbf{v}_{r}^\prime - \mathbf{v}_{-l} - \mathbf{v}_{-l}^\prime.
			\end{equation}
			Let $v^\prime$ be a hollow vertex of $P_{l,r} \cup \left \{v\right \}$ with Gram vector $\mathbf{v}^\prime$.
			The graph $P^\prime_{l,r} \cup \left \{v,v^\prime \right \}$ is equivalent to one of the $\Z[i]$-graphs $T_{2k}$ or $T^{(i)}_{2k}$ (for $k = l+r+ 2$), and hence it too is cyclotomic.

			Now we show that every vertex in $V(P_{l,r}) \cup \left \{v\right \}$ is $( P_{l,r}^\prime \cup \left \{v, v^\prime \right \} )$-saturated in $P_{l,r} \cup \left \{ v \right \}$.
			By Lemma~\ref{lem:saturation}, this immediately reduces to showing that $v_{-l}$, $v_r$, and $v$ are $( P_{l,r}^\prime \cup \left \{v, v^\prime \right \} )$-saturated in $P_{l,r} \cup \left \{ v \right \}$.

			First we show that $v_{-l}$ is $( P_{l,r}^\prime \cup \left \{v, v^\prime \right \} )$-saturated in $P_{l,r} \cup \left \{ v \right \}$.
			Suppose that a vertex $x \in V(G) \backslash V(P_{l,r} \cup \left \{v\right \})$ is adjacent to $v_{-l}$.
			We can assume that $\inprod{\mathbf{x}, \mathbf{v}_{-l}} = -1$.
			We must have that $x$ is adjacent to at least one of the vertices $v_r$ and $v_{2-l}$, otherwise $G$ would contain a subgraph equivalent to $YA_3$.
			The exclusion of $XA_1$, $YA_2$, and $YA_6$ forces $x$ to be adjacent to either $v_r$ or $v_{2-l}$.
			If $x$ is adjacent to $v_{2-l}$ then, since $v_{2-l}$ is $( P_{l,r}^\prime \cup \left \{v, v^\prime \right \} )$-saturated in $P_{l,r} \cup \left \{ v \right \}$, the Gram vector $\mathbf{x}$ must be switch-equivalent to $\mathbf{v}_{1-l}^\prime$.
			Otherwise, we assume $x$ is adjacent to $v_r$.
			If $\inprod{\mathbf{x}, \mathbf{v}_{r}} = -s$ then $G$ would contain a subgraph equivalent $XA_1$ and if $\inprod{\mathbf{x}, \mathbf{v}_{r}} = \pm is$ then $G$ would contain a subgraph equivalent to $YA_6$.
			We have, therefore, that $\inprod{\mathbf{x}, \mathbf{v}_{r}} = s$.
			Apply Lemma~\ref{lem:orthogtohollowsboth}, to give that, in particular, $\mathbf{x}$ is orthogonal to the vectors $\mathbf{v}_{1-l}$, $\mathbf{v}_{2-l}$, $\mathbf{v}_{2-l}^\prime$, $\mathbf{v}_{r-1}$, $\mathbf{v}_{r-2}$, and $\mathbf{v}_{r-2}^\prime$.
			The inner product of $\mathbf{x}$ with equation~\eqref{eqn:forvr1} and the inner product of $\mathbf{x}$ with equation~\eqref{eqn:forv1l} yield $\inprod{\mathbf{x}, \mathbf{v}_r^\prime} = -s$ and $\inprod{\mathbf{x}, \mathbf{v}_{-l}^\prime} = -1$ respectively.
			Now, taking the inner product of $\mathbf{x}$ with equation~\eqref{eqn:forvp} gives $\mathbf{x} = \mathbf{v}^\prime$.
			Hence, the vertex $v_{-l}$ is $( P_{l,r}^\prime \cup \left \{v, v^\prime \right \} )$-saturated in $P_{l,r} \cup \left \{ v \right \}$.
			Similar arguments show that $v_r$ is also $( P_{l,r}^\prime \cup \left \{v, v^\prime \right \} )$-saturated in $P_{l,r} \cup \left \{ v \right \}$.

			It remains to show that $v$ is $( P_{l,r}^\prime \cup \left \{v, v^\prime \right \} )$-saturated in $P_{l,r} \cup \left \{ v \right \}$.
			Suppose that a vertex $x \in V(G) \backslash ( V(P_{l,r})\cup \left \{v\right \} )$ is adjacent to $v$.
			Since triangles have been excluded, $x$ is adjacent to neither $v_{-l}$ nor $v_r$.
			In fact, we must have that $x$ is adjacent to either $v_{1-l}$ or $v_{r-1}$ otherwise $G$ would contain a subgraph equivalent to $YA_3$.
			Both $v_{1-l}$ and $v_{r-1}$ are $( P_{l,r}^\prime \cup \left \{v, v^\prime \right \} )$-saturated in $P_{l,r} \cup \left \{ v \right \}$, so we are done.

			Thus we have that each vertex in $\mathcal V_G(P_{l,r} \cup \left \{ v \right \}))$ is $( P_{l,r}^\prime \cup \left \{v, v^\prime \right \} )$-saturated in $P_{l,r} \cup \left \{ v \right \}$.
			Since $G$ is an $\mathcal L_1$-free $S$-graph containing $\mathcal V_G(P_{l,r} \cup \left \{ v \right \}))$, each vertex of $G$ corresponds to a vertex of $P_{l,r}^\prime \cup \left \{v, v^\prime \right \}$.
			This correspondence is one to one, since otherwise, if two vertices $x$ and $y$ of $G$ were both switch-equivalent to the same vertex $z$, then $\abs{w(x,y)} = 2$, which is not in $S$.
			Depending on the value of $s$, the $S$-graph $P_{l,r}^\prime \cup \left \{v, v^\prime \right \}$ is equivalent to either $T_{2(l+r+2)}$ or $T^{(i)}_{2(l+r+2)}$.
			Hence $G$ is contained in a graph equivalent to either $T_{2(l+r+2)}$ or $T^{(i)}_{2(l+r+2)}$.
		\end{proof}

		
		\subsection{$\mathcal L_1$-free $S$-graphs on up to 9 vertices}
		\label{sec:unchargedS9}

		Consider the infinite family of $n$-vertex $(n \geqslant 3)$ $S$-cycles $O^{(s)}_n$ illustrated below.
		\begin{center}
			\begin{tikzpicture}[scale=1, auto]
				\newdimen\rad
				\rad=1cm
				\newdimen\radi
				\radi=1.04cm
				\draw (271.5:\radi) node[empty] {$\dots$};
				\foreach \y in {60,120,180,240,300}
				{
					\def\x{\y - 120}
					\draw (\x:\rad) node[vertex] {};
			    }
				\foreach \y in {60,120,240,300}
				{
					\def\x{\y - 120}
					\draw[pedge] (\x:\rad) arc (\x:\x+60:\rad);
			    }
				\draw[wedge] (60:\rad) arc (60:120:\rad);
				\draw (240:\rad) node[vertex] {};
				\draw[pedge] (240:\rad) arc (240:250:\rad);
				\draw[pedge] (290:\rad) arc (290:300:\rad);
				\node at (0,0) {$O_n^{(s)}$};
			\end{tikzpicture}
		\end{center}
		The edge of $O_n^{(s)}$ marked with an arrow corresponds to the edge of weight $s$.
		The $S$-cycles $O^{(s)}_n$ can be defined on vertices $v_1,\dots,v_n$ by setting $w(v_1, v_n) = s$ for some $s \in S$ and $w(v_j,v_{j+1}) = 1$ for $j \in \left \{1,\ldots,n-1\right \}$.

		\begin{lemma}\label{lem:CnSmallSpan}
			The $S$-graph $O^{(s)}_n$ is cyclotomic for all $n \geqslant 3$.
		\end{lemma}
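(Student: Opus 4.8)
The plan is to diagonalise $O_n^{(s)}$ by hand. First dispose of the case $s = 0$: then $O_n^{(s)}$ is the path $P_n$, whose eigenvalues are $2\cos\!\big(k\pi/(n+1)\big)$ for $k = 1,\dots,n$, all lying in $[-2,2]$; since the characteristic polynomial of a Hermitian $\Z[i]$-matrix automatically has integer coefficients (as observed in the introduction), $P_n$ is cyclotomic. So we may assume $s \neq 0$, in which case $s$ is a unit of $\Z[i]$ and $\abs s = 1$.

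Take $A$ to be the Hermitian adjacency matrix of $O_n^{(s)}$ with $a_{j,j+1} = 1$ for $1 \leqslant j \leqslant n-1$, $a_{n,1} = s$, $a_{1,n} = \bar s$, and all other entries zero; note that every weight other than the corner one is already $1$, so no switching is needed here. I will look for eigenvectors of the shape $\mathbf x = (z, z^2, \dots, z^n)^\top$ with $z \in \C^\times$. For each interior index $2 \leqslant j \leqslant n-1$ the equation $(A\mathbf x)_j = \lambda x_j$ reads $z^{j-1} + z^{j+1} = \lambda z^j$, i.e.\ $\lambda = z + z^{-1}$ (this range of indices is non-empty since $n \geqslant 3$). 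Substituting $\lambda = z + z^{-1}$ into the two boundary equations, row $1$ gives $z^2 + \bar s\, z^n = z^2 + 1$ and row $n$ gives $z^{n-1} + s z = z^{n+1} + z^{n-1}$; using $\abs s = 1$, each of these is equivalent to the single condition $z^n = s$.

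Letting $z$ run over the $n$ distinct $n$-th roots of $s$ thus produces $n$ eigenvectors of $A$ with eigenvalues $\lambda = z + z^{-1}$; these eigenvectors are linearly independent (a Vandermonde argument) and hence account for the whole spectrum of $A$. Since $\abs s = 1$, each such root has the form $z = e^{i\theta}$, so every eigenvalue equals $2\cos\theta \in [-2,2]$. Combined with the automatic integrality of $\chi_A$, this shows $O_n^{(s)}$ is cyclotomic. There is no real obstacle in the argument; one only needs to keep track of the two boundary rows and the degenerate case $s = 0$. (Equivalently, the computation above yields $z^n\chi_A(z + 1/z) = (z^n - s)(z^n - \bar s)$, which for $s \in \{\pm 1, \pm i\}$ is one of $(z^n - 1)^2$, $(z^n+1)^2$, or $z^{2n}+1$ — in each case a product of cyclotomic polynomials — so that $O_n^{(s)}$ is cyclotomic by Kronecker's theorem; alternatively one may reduce to the cases $s \in \{0,1,i\}$ using switching-, Galois-, and sign-equivalence, the first two being the path and the cycle $C_n$.)
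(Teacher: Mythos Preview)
Your proof is correct but takes a genuinely different route from the paper's. The paper gives a one-line argument: $O_n^{(s)}$ is contained in either $T_{2k}$ or $T_{2k}^{(i)}$ (the toral families of Figure~\ref{fig:maxcycs1}), and these are visibly cyclotomic since their adjacency matrices square to $4I$; interlacing (Theorem~\ref{thm:interlacing}) then forces every eigenvalue of the subgraph into $[-2,2]$.

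By contrast, you diagonalise $O_n^{(s)}$ explicitly using the ansatz $\mathbf{x}=(z,z^2,\dots,z^n)^\top$, reducing the eigenvalue problem to $z^n=s$ and reading off $\lambda=z+z^{-1}=2\cos\theta$. This is entirely self-contained: it does not rely on first identifying a cyclotomic supergraph, and it yields the spectrum (and indeed the factorisation $z^n\chi_A(z+1/z)=(z^n-s)(z^n-\bar s)$) rather than merely bounding it. The paper's approach is shorter precisely because it exploits the structural machinery already in place, whereas yours would stand on its own even outside the context of the classification. Your treatment of the boundary rows and the degenerate case $s=0$ is careful and correct; the only cosmetic point is that the paper's convention has $a_{1,n}=w(v_1,v_n)=s$ rather than $\bar s$, but this is immaterial to the argument.
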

		\begin{proof}
			Since $O^{(s)}_n$ is contained in either $T_{2k}$ or $T_{2k}^{(i)}$, which are both cyclotomic, the lemma follows by Theorem~\ref{thm:interlacing}.
		\end{proof}

		\begin{lemma}\label{lem:cycleequiv}
			Let $G$ be an uncharged $S$-cycle.
			Then $G$ is strongly equivalent to $O_n^{(s)}$ for some $s \in \left \{\pm 1, \pm i \right \}$ and some $n \in \N$.
		\end{lemma}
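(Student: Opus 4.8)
The plan is to normalise the edge-weights of the cycle by a sequence of switchings, turning every edge-weight but one into $1$. First I would record that, since in this section $S = \{0, \pm 1, \pm i\}$ and every vertex of an $S$-cycle has exactly two neighbours, each of the $n$ edge-weights of $G$ is a nonzero element of $S$, hence one of $\pm 1, \pm i$; in particular every vertex has degree $2$, consistent with Lemma~\ref{lem:maxDeg4}. Label the vertices $v_1, \dots, v_n$ cyclically so that the edges of $G$ are $\{v_j, v_{j+1}\}$ for $j = 1, \dots, n-1$ together with $\{v_n, v_1\}$, and write $w_j$ for the weight of the $j$-th of these edges, so that $w_n = w(v_n, v_1)$ and each $w_j \in \{\pm 1, \pm i\}$.

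Next I would process the vertices $v_2, v_3, \dots, v_n$ in order, maintaining the invariant that after the $k$-th step the weights of the edges $\{v_1,v_2\},\dots,\{v_k,v_{k+1}\}$ have all been turned into $1$ while each remaining edge-weight still lies in $\{\pm 1, \pm i\}$. At step $k$ (with $1 \leqslant k \leqslant n-1$) perform a $u$-switching at $v_{k+1}$, choosing the unit $u$ so that the currently nontrivial weight of $\{v_k, v_{k+1}\}$ becomes $1$; this multiplies the weight of the other edge at $v_{k+1}$, namely $\{v_{k+1}, v_{k+2}\}$ with indices read modulo $n$, by a unit from $\{\pm 1, \pm i\}$ and leaves every other edge-weight unchanged. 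Since $\{\pm 1, \pm i\}$ is closed under multiplication by units, the invariant is preserved. The one point that needs care is the orientation bookkeeping — which of the two edge-weights incident to $v_{k+1}$ is scaled by $u$ and which by $\bar u$ — but in either case $u$ can be chosen to kill the weight of $\{v_k, v_{k+1}\}$, so the argument goes through verbatim.

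After the step at $v_n$ (which is step $k=n-1$, affecting the edge $\{v_n,v_1\}$) we are left with a graph in which $w(v_j, v_{j+1}) = 1$ for $1 \leqslant j \leqslant n-1$ and $w(v_n, v_1) = s$ for some $s \in \{\pm 1, \pm i\}$; this is precisely the definition of $O_n^{(s)}$. Each switching is conjugation by a unitary matrix over $\Z[i]$, and all labellings of a graph are strongly equivalent, so $G$ is strongly equivalent to $O_n^{(s)}$, with $n \geqslant 3$ since $G$ is a cycle. I do not expect any genuine obstacle here beyond the orientation bookkeeping noted above.
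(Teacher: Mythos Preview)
Your proposal is correct and follows essentially the same approach as the paper: label the vertices of the cycle, then inductively switch at $v_2,\dots,v_n$ to force each consecutive edge-weight $w(v_j,v_{j+1})$ to $1$, leaving the closing edge with some unit weight $s$. The paper's proof is just a terser version of exactly this argument, so there is nothing to add.
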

		\begin{proof}
			Suppose $G$ is an $S$-cycle on $n$ vertices.
			Label the vertices $v_1, \ldots, v_n$ so that $v_1$ is adjacent to $v_n$ and $v_j$ is adjacent to $v_{j+1}$ for all $j \in \left \{1,\ldots,n-1\right \}$.
			We can inductively switch the vertices of $G$ so that $w(v_j,v_{j+1}) = 1$ for all $j \in \left \{1,\ldots,n-1\right \}$, and $w(v_1,v_n) = s$ for some $s \in \left \{\pm 1, \pm i \right \}$.
		\end{proof}

		Let $G$ be an $\mathcal L_1$-free $S$-graph.
		If the maximum degree of $G$ is $1$ then $G$ is just an edge.
		If the maximum degree of $G$ is $2$, then $G$ is either an $S$-cycle or an $S$-path.
		If $G$ is an $S$-path then by inductively switching the vertices, we obtain an equivalent $\left \{0,1\right \}$-path which is contained in the visibly cyclotomic $\Z$-graph $T_{2k}$ for some $k$.
		If $G$ is an $S$-cycle then, by Lemma~\ref{lem:cycleequiv}, $G$ is equivalent to the $S$-cycle $O^{(s)}_n$ in Lemma~\ref{lem:CnSmallSpan} for some $s \in \left \{\pm 1, \pm i \right \}$.
		The problem, therefore, reduces to assuming that the maximum degree of $G$ is at least $3$.

		Below we describe the process of computing $\mathcal L_1$-free $S$-graphs on a given number of vertices.

		\paragraph{Growing process} 
		\label{par:growing_process}
			Start with a single vertex $H$.
			Consider all possible ways of adding a vertex to $H$ such that the resulting graph $H^\prime$ is $\mathcal L_1$-free.
			Repeat this process with all supergraphs $H^\prime$ until all $\mathcal L_1$-free $S$-graphs on the desired number of vertices have been obtained.

		We have exhaustively computed (up to equivalence) all $\mathcal L_1$-free $S$-graphs on up to $9$ vertices having maximal degree at least $3$.
		Out of these graphs, the ones on $9$ vertices contained a subgraph equivalent to either $P_{0,3}$ or $P_{1,2}$.
		It should be noted that this computation can be done by hand.
		One considers all $\mathcal L_1$-free $S$-supergraphs of the complete bipartite graph $K_{1,3}$ that do not contain a graph equivalent to $P_{l,r}$, with $l + r > 2$, to find that there do not exist any such graphs on more than $8$ vertices.
		For the sake of succinctness we  have omitted the details.

		Now, from the above computation and by iteratively applying Lemmata~\ref{lem:adjacentleft}, \ref{lem:adjacentright}, and \ref{lem:adjacentboth}, we have the following lemma.

		\begin{lemma}\label{lem:unL1free}
			Let $G$ be an $\mathcal L_1$-free $S$-graph.
			Then $G$ is contained in either $T_{2k}$ or $T_{2k}^{(i)}$ for $k \geqslant 3$.
		\end{lemma}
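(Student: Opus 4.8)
The plan is to split on the maximum degree $\Delta(G)$ of $G$ and, once it is at least $3$, to grow a maximal subgraph of the form $P_{l,r}$ and feed it to the inductive lemmata. If $\Delta(G)\leqslant 1$ then $G$ is a single vertex or a single edge and sits inside $T_6$. If $\Delta(G)=2$ then $G$ is an $S$-path or an $S$-cycle: inductively switching (as in the proof of Lemma~\ref{lem:cycleequiv}) turns an $S$-path into a $\{0,1\}$-path, which lies in $T_{2k}$ for suitably large $k$, while an $S$-cycle is strongly equivalent to some $O^{(s)}_n$ by Lemma~\ref{lem:cycleequiv} and so lies in $T_{2k}$ or $T^{(i)}_{2k}$ by the proof of Lemma~\ref{lem:CnSmallSpan} (triangles being excluded, $n\geqslant 4$, so $k\geqslant 3$). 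So from now on assume $\Delta(G)\geqslant 3$, i.e.\ $G$ contains a subgraph equivalent to $K_{1,3}$.

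If $\abs{V(G)}\leqslant 8$ then $G$ is one of the finitely many $\mathcal L_1$-free $S$-graphs of maximum degree at least $3$ found in the exhaustive computation on up to nine vertices, each of which is directly checked there to be equivalent to a subgraph of some $T_{2k}$ or $T^{(i)}_{2k}$. If $\abs{V(G)}\geqslant 9$, choose a vertex $x$ with $\deg_G(x)\geqslant 3$ together with three of its neighbours, and keep adjoining vertices adjacent to the set built so far until it has exactly nine vertices; the induced subgraph $H$ on that set is connected, has maximum degree at least $3$, and, being an induced subgraph of $G$, is cyclotomic by Theorem~\ref{thm:interlacing} and contains no vertex in $\mathcal L_1$. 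Hence $H$ is $\mathcal L_1$-free, so by the computation $H$ — and therefore $G$ — contains a subgraph equivalent to $P_{0,3}$ or to $P_{1,2}$; in particular $G$ contains $P_{l,r}$ for some $l,r$ with $l+r>2$.

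Since $G$ is finite, fix a subgraph $P=P_{L,R}$ of $G$ that is equivalent to some $P_{l,r}$ with $l+r>2$ and for which $L+R$ is as large as possible. By the remark after Lemma~\ref{lem:saturation}, every vertex of $V^\prime_4(\mathcal V_G(P))$ is $P^\prime_{L,R}$-saturated in $\mathcal V_G(P)$; concretely, every vertex of $\mathcal V_G(P)$ other than those switch-equivalent to $v_{-L}$, $v^\prime_{-L}$, $v_R$, or $v^\prime_R$ has all of its neighbours inside $\mathcal V_G(P)$. If $V(G)=\mathcal V_G(P)$ then, as in the proof of Lemma~\ref{lem:adjacentboth}, the vertices of $G$ are in one-to-one adjacency-preserving correspondence with a subset of $V(P^\prime_{L,R})$ (two vertices switch-equivalent to the same vertex of $P^\prime_{L,R}$ would force an edge-weight of modulus $2\notin S$), so $G$ is equivalent to a subgraph of $P^\prime_{L,R}$ and hence of $T_{2(L+R+2)}$. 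Otherwise connectedness supplies a vertex $v\in V(G)\setminus\mathcal V_G(P)$ adjacent to $\mathcal V_G(P)$; by the saturation statement $v$ is adjacent to a vertex switch-equivalent to one of $v_{-L}$, $v^\prime_{-L}$, $v_R$, $v^\prime_R$, and using the symmetry of $P^\prime_{L,R}$ that interchanges each $v_j$ with $v^\prime_j$ (it negates the adjacency matrix and is therefore an equivalence), after replacing $P$ by an equivalent subgraph of $G$ we may take $v$ adjacent to $v_{-L}$ or to $v_R$. If $v$ is adjacent to exactly one of these, Lemma~\ref{lem:adjacentleft} or Lemma~\ref{lem:adjacentright} produces a subgraph of $G$ equivalent to $P_{L+1,R}$ or $P_{L,R+1}$, contradicting the maximality of $L+R$; so $v$ is adjacent to both, and Lemma~\ref{lem:adjacentboth} then shows $G$ is contained in a graph equivalent to $T_{2(L+R+2)}$ or $T^{(i)}_{2(L+R+2)}$. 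Since $l+r>2$ forces $L+R+2\geqslant 5$, in every case $G$ lies in $T_{2k}$ or $T^{(i)}_{2k}$ with $k\geqslant 3$.

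The step I expect to be the main obstacle is the reduction, via the symmetry of $P^\prime_{L,R}$, of the four possible attachment points $v_{-L},v^\prime_{-L},v_R,v^\prime_R$ to the two handled by Lemmata~\ref{lem:adjacentleft}--\ref{lem:adjacentboth}: one must verify that the row-swap of $P^\prime_{L,R}$ carries a configuration in which $v$ is attached at a hollow endpoint to one in which $v$ is attached at a solid endpoint, while preserving $\mathcal V_G(P)$ and the maximality of $L+R$. Everything else comes down to the finiteness of $G$ and the already-established saturation and adjacency lemmas.
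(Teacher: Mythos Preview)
Your proof is correct and follows essentially the same approach as the paper, which disposes of the lemma in one line (``from the above computation and by iteratively applying Lemmata~\ref{lem:adjacentleft}, \ref{lem:adjacentright}, and \ref{lem:adjacentboth}''); you have simply made the iteration explicit via a maximality argument. The symmetry reduction you flag as the main obstacle is exactly the content of the remark following Lemma~\ref{lem:saturation}, and your handling of it (replacing $P$ by the equivalent copy with $u$ in the role of $v_{-L}$) is the intended reading.
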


		Together with the computation of the maximal connected cyclotomic $\Z[i]$-graphs containing the excluded subgraphs of type II from the list $\mathcal L_1$ (see Figure~\ref{fig:xgraphsi}), we have proved Theorem~\ref{thm:classununzi}.

	\section{Proof of Theorem~\ref{thm:classunzi}}
	\label{sec:proof2}

	In this section we prove Theorem~\ref{thm:classunzi}.
	Let $G$ be an uncharged cyclotomic $\Z[i]$-graph.
	By Lemma~\ref{lem:maxDeg4}, we know that $G$ cannot be equivalent to a graph containing any weight-$\alpha$ edge where the norm of $\alpha$ is greater than $4$.
	Therefore $G$ can have edge-weights coming only from the subset	$\left \{0,\pm 1, \pm i, \pm 1 \pm i, \pm 2, \pm 2i \right \}$.


	\subsection{Excluded subgraphs}

	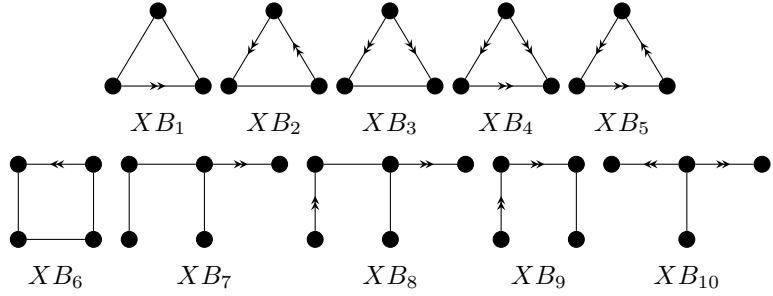
\begin{figure}[htbp]
		\centering
		\begin{tikzpicture}[scale=1, auto]
		\begin{scope}[yshift=-0.5cm]	
			\foreach \pos/\name in {{(0,0)/a}, {(1.2,0)/b}, {(0.6,1)/c}}
				\node[vertex] (\name) at \pos {}; 
			\node at (0.6,-0.5) {$XB_{1}$};
			\foreach \edgetype/\source/ \dest /\weight in {wwedge/a/b/{}, pedge/b/c/{}, pedge/c/a/{}}
			\path[\edgetype] (\source) -- node[weight] {$\weight$} (\dest);
		\end{scope}
		\end{tikzpicture}
		\begin{tikzpicture}[scale=1, auto]
		\begin{scope}[yshift=-0.5cm]	
			\foreach \pos/\name in {{(0,0)/a}, {(1.2,0)/b}, {(0.6,1)/c}}
				\node[vertex] (\name) at \pos {}; 
			\node at (0.6,-0.5) {$XB_2$};
			\foreach \edgetype/\source/ \dest /\weight in {pedge/a/b/{}, wwedge/b/c/{}, wwedge/c/a/{}}
			\path[\edgetype] (\source) -- node[weight] {$\weight$} (\dest);
		\end{scope}
		\end{tikzpicture}
		\begin{tikzpicture}[scale=1, auto]
		\begin{scope}[yshift=-0.5cm]	
			\foreach \pos/\name in {{(0,0)/a}, {(1.2,0)/b}, {(0.6,1)/c}}
				\node[vertex] (\name) at \pos {}; 
			\node at (0.6,-0.5) {$XB_{3}$};
			\foreach \edgetype/\source/ \dest /\weight in {pedge/a/b/{}, wwedge/c/b/{}, wwedge/c/a/{}}
			\path[\edgetype] (\source) -- node[weight] {$\weight$} (\dest);
		\end{scope}
		\end{tikzpicture}
		\begin{tikzpicture}[scale=1, auto]
		\begin{scope}[yshift=-0.5cm]	
			\foreach \pos/\name in {{(0,0)/a}, {(1.2,0)/b}, {(0.6,1)/c}}
				\node[vertex] (\name) at \pos {}; 
			\node at (0.6,-0.5) {$XB_{4}$};
			\foreach \edgetype/\source/ \dest /\weight in {wwedge/a/b/{}, wwedge/c/b/{}, wwedge/c/a/{}}
			\path[\edgetype] (\source) -- node[weight] {$\weight$} (\dest);
		\end{scope}
		\end{tikzpicture}
		\begin{tikzpicture}[scale=1, auto]
		\begin{scope}[yshift=-0.5cm]	
			\foreach \pos/\name in {{(0,0)/a}, {(1.2,0)/b}, {(0.6,1)/c}}
				\node[vertex] (\name) at \pos {}; 
			\node at (0.6,-0.5) {$XB_{5}$};
			\foreach \edgetype/\source/ \dest /\weight in {wwedge/a/b/{}, wwedge/b/c/{}, wwedge/c/a/{}}
			\path[\edgetype] (\source) -- node[weight] {$\weight$} (\dest);
		\end{scope}
		\end{tikzpicture}

		\begin{tikzpicture}
		\begin{scope}[xshift=12cm, yshift=-0.5cm]	
			\foreach \pos/\name in {{(0,0)/a}, {(0,1)/b}, {(1,1)/c}, {(1,0)/d}}
				\node[vertex] (\name) at \pos {}; 
			\node at (0.5,-0.5) {$XB_6$};
			\foreach \edgetype/\source/ \dest /\weight in {pedge/a/b/{}, wwedge/c/b/{}, pedge/c/d/{}, pedge/a/d/{}}
			\path[\edgetype] (\source) -- node[weight] {$\weight$} (\dest);
		\end{scope}
		\end{tikzpicture}
		\begin{tikzpicture}[scale=1, auto]
		\begin{scope}[yshift=-0.5cm]	
			\foreach \pos/\name in {{(0,0)/a}, {(0,1)/b}, {(1,1)/c}, {(1,0)/d}, {(2,1)/e}}
				\node[vertex] (\name) at \pos {}; 
			\node at (1,-0.5) {$XB_7$};
			\foreach \edgetype/\source/ \dest /\weight in {pedge/a/b/{}, pedge/c/b/{}, pedge/c/d/{}, wwedge/c/e/{}}
			\path[\edgetype] (\source) -- node[weight] {$\weight$} (\dest);
		\end{scope}
		\end{tikzpicture}
		\begin{tikzpicture}[scale=1, auto]
		\begin{scope}[yshift=-0.5cm]	
			\foreach \pos/\name in {{(0,0)/a}, {(0,1)/b}, {(1,1)/c}, {(1,0)/d}, {(2,1)/e}}
				\node[vertex] (\name) at \pos {}; 
			\node at (1,-0.5) {$XB_8$};
			\foreach \edgetype/\source/ \dest /\weight in {wwedge/a/b/{}, pedge/c/b/{}, pedge/c/d/{}, wwedge/c/e/{}}
			\path[\edgetype] (\source) -- node[weight] {$\weight$} (\dest);
		\end{scope}
		\end{tikzpicture}
		\begin{tikzpicture}[scale=1, auto]
		\begin{scope}[yshift=-0.5cm]	
			\foreach \pos/\name in {{(0,0)/a}, {(0,1)/b}, {(1,1)/c}, {(1,0)/d}}
				\node[vertex] (\name) at \pos {}; 
			\node at (0.5,-0.5) {$XB_9$};
			\foreach \edgetype/\source/ \dest /\weight in {wwedge/a/b/{}, wwedge/b/c/{}, pedge/c/d/{}}
			\path[\edgetype] (\source) -- node[weight] {$\weight$} (\dest);
		\end{scope}
		\end{tikzpicture}
		\begin{tikzpicture}[scale=1, auto]
		\begin{scope}[yshift=-0.5cm]	
			\foreach \pos/\name in {{(0,1)/b}, {(1,1)/c}, {(1,0)/d}, {(2,1)/e}}
				\node[vertex] (\name) at \pos {}; 
			\node at (1,-0.5) {$XB_{10}$};
			\foreach \edgetype/\source/ \dest /\weight in {wwedge/c/b/{}, pedge/c/d/{}, wwedge/c/e/{}}
			\path[\edgetype] (\source) -- node[weight] {$\weight$} (\dest);
		\end{scope}
		\end{tikzpicture}
		\caption{some non-cyclotomic uncharged $\Z[i]$-graphs.}
		\label{fig:xgraphs1}
	\end{figure}

	\begin{figure}[htbp]
		\centering
		\begin{tikzpicture}[scale=1, auto]
		\begin{scope}[yshift=-0.5cm]	
			\foreach \pos/\name in {{(0,0)/a}, {(0,1)/b}, {(1,1)/c}, {(1,0)/d}}
				\node[vertex] (\name) at \pos {}; 
			\node at (0.5,-0.5) {$YB_1$};
			\foreach \edgetype/\source/ \dest /\weight in {pedge/a/b/{}, wwedge/b/c/{}, pedge/c/d/{}}
			\path[\edgetype] (\source) -- node[weight] {$\weight$} (\dest);
		\end{scope}
		\end{tikzpicture}
		\begin{tikzpicture}
		\begin{scope}[xshift=2cm, yshift=-0.5cm]	
			\foreach \pos/\name in {{(0,0)/a}, {(0,1)/b}, {(1,1)/c}, {(1,0)/d}}
				\node[vertex] (\name) at \pos {}; 
			\node at (0.5,-0.5) {$YB_2$};
			\foreach \edgetype/\source/ \dest /\weight in {pedge/a/b/{}, wwedge/b/c/{}, pedge/c/d/{}, wwnedge/a/d/{}}
			\path[\edgetype] (\source) -- node[weight] {$\weight$} (\dest);
		\end{scope}
		\end{tikzpicture}
		\begin{tikzpicture}
		\begin{scope}[xshift=12cm, yshift=-0.5cm]	
			\foreach \pos/\name in {{(0,0)/a}, {(0,1)/b}, {(1,1)/c}, {(1,0)/d}}
				\node[vertex] (\name) at \pos {}; 
			\node at (0.5,-0.5) {$YB_3$};
			\foreach \edgetype/\source/ \dest /\weight in {pedge/a/b/{}, wwedge/c/b/{}, pedge/c/d/{}, nedge/a/d/{}}
			\path[\edgetype] (\source) -- node[weight] {$\weight$} (\dest);
		\end{scope}
		\end{tikzpicture}
		\begin{tikzpicture}
		\begin{scope}	
			\foreach \pos/\name/\type/\charge in {{(0,0)/a/zc/{}}, {(1,0)/b/zc/{}}}
				\node[\type] (\name) at \pos {$\charge$}; 
			\foreach \edgetype/\source/ \dest / \weight in {pedge/a/b/2}
			\path[\edgetype] (\source) -- node[weight2] {$\weight$} (\dest);
			\node at (0.5,-1) {$YB_4$};
		\end{scope}
		\end{tikzpicture}
		\caption{some cyclotomic $\Z[i]$-graphs that are contained as subgraphs of fixed maximal connected cyclotomic $\Z[i]$-graphs.}
		\label{fig:xgraphs2}
	\end{figure}

	\begin{table}[htbp]
		\begin{center}
		\begin{tabular}{c|c}
			Excluded subgraph & Maximal cyclotomics \\
			\hline
			$YB_1$ & $S_8^\ddag$ \\
			$YB_2$ & $S_8^\ddag$ \\
			$YB_3$ & $S_{8}^{\dag \dag}$ \\ 
			$YB_4$ & $S_2$ \\
		\end{tabular}
		\end{center}
		\caption{Excluded subgraphs from Figure~\ref{fig:xgraphs2} and (up to equivalence) their containing maximal connected cyclotomic $\Z[i]$-graphs.}
		\label{tab:exgraphsiweight}
	\end{table}

	In Table~\ref{tab:exgraphsiweight} we list each excluded subgraph of type II in Figure~\ref{fig:xgraphs2} along with every maximal connected cyclotomic $\Z[i]$-graph that contains it.
	Let $\mathcal L_2$ consist of all cyclotomic charged vertices and the graphs in Figures~\ref{fig:xgraphsi} and \ref{fig:xgraphs2}.
	Hence, all $\mathcal L_2$-free $\Z[i]$-graphs are uncharged and, since we have excluded $XB_1, XB_2, XB_3, XB_4, XB_5$ together with $YA_4$ and $YA_5$, we have that no $\mathcal L_2$-free $\Z[i]$-graph can contain a subgraph whose underlying subgraph is a triangle.
	As in Section~\ref{sec:growZi}, we may refer to this fact as the `exclusion of triangles'.
	For this section, the notion of a saturated vertex will depend on the list $\mathcal L_2$.
	
\subsection{Inductive lemmata} 
\label{sub:inductive_lemmata2}

Define $P_{2r+1}$ (solid vertices) and $P^\prime_{2r+1}$ (solid vertices and hollow vertices) with the following $\Z[i]$-graph
\[
\begin{tikzpicture}
	\begin{scope}[auto, scale=1.3]
		\foreach \type/\pos/\name in {{vertex/(-1,0.5)/bgn}, {ghost/(0,0)/a2}, {vertex/(0,1)/a1}, {vertex/(1,1)/b1}, {ghost/(1,0)/b2}, {empty/(1.6,1)/b11}, {empty/(1.6,0)/b21}, {empty/(1.4,0.6)/b12}, {empty/(1.4,0.4)/b22}, {empty/(2.4,1)/c11}, {empty/(2.4,0)/c21}, {empty/(2.6,0.6)/c12}, {empty/(2.6,0.4)/c22}, {vertex/(3,1)/c1}, {ghost/(3,0)/c2}, {vertex/(4,1)/d1}, {ghost/(4,0)/d2}}
			\node[\type] (\name) at \pos {};
		\foreach \pos/\name in {{(2,0.5)/\dots},{(-1.3,0.5)/v_0}, {(0,1.3)/v_1}, {(0,-0.3)/v_1^\prime}, {(1,1.3)/v_2}, {(1,-0.3)/v_2^\prime}, {(3,1.3)/v_{r-1}}, {(3,-0.3)/v_{r-1}^\prime}, {(4,1.3)/v_r}, {(4,-0.3)/v_r^\prime,}}
			\node at \pos {$\name$};
		\foreach \edgetype/\source/ \dest in {wwedge/a1/bgn, wwedge/a2/bgn, nedge/b1/a2, pedge/a1/b1, pedge/a1/b2, nedge/a2/b2, nedge/b21/b2, pedge/b1/b11, pedge/b1/b12, nedge/b2/b22, pedge/c11/c1, nedge/c12/c1, pedge/c22/c2, nedge/c21/c2, nedge/d1/c2, pedge/c1/d1, pedge/c1/d2, nedge/c2/d2}
			\path[\edgetype] (\source) -- (\dest);
	\end{scope}
\end{tikzpicture}	
\]
where $r \geqslant 1$.
The set of hollow vertices of $P_{2r+1}$ is the set $V(P^\prime_{2r+1}) \backslash V(P_{2r+1})$.
Clearly both $P_{2r+1}$ and $P^\prime_{2r+1}$ are cyclotomic since they are contained in $C_{2(r+1)}$.
Note that $P_{2r+1}$ has $r+1$ vertices and $P_{2r+1}^\prime$ has $2r+1$ vertices.
Having chosen Gram vectors $\mathbf{v}_0, \ldots, \mathbf{v}_r$, by an argument similar to the proof of Lemma~\ref{lem:adeqvec}, we can write
\begin{equation}
	\label{eqn:forv0weight}
	\mathbf{v}_1^\prime = -\mathbf{v}_1 + (1 + i) \mathbf{v}_0
\end{equation}
and
\begin{equation}
	\label{eqn:forv1weight}
	\mathbf{v}_2^\prime = -\mathbf{v}_2 + 2 \mathbf{v}_1 -(1 + i)\mathbf{v}_0.
\end{equation}
\begin{lemma}\label{lem:gramvectsind2}
	In $P_{2r+1}$ for $r \geqslant 2$, we can write the Gram vector for each hollow vertex in terms of Gram vectors of the vertices as follows:
	\begin{equation*}
		\mathbf{v}^\prime_{t} = -\mathbf{v}_{t} - 2\sum_{j=1}^{t-1} (-1)^{t+j} \mathbf{v}_{j} - (-1)^t (1+i) \mathbf{v}_0, \quad \text{ for } t \in \left \{1,\dots, r\right \}.
	\end{equation*}
\end{lemma}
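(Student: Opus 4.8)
The plan is to prove the formula by a two-step induction on $t$, with the inductive step supplied by Lemma~\ref{lem:adeqvec} applied at an interior solid vertex of $P_{2r+1}$. The base cases $t=1$ and $t=2$ are precisely equations~\eqref{eqn:forv0weight} and \eqref{eqn:forv1weight}; I would first check that the claimed closed form reduces to these when $t\in\{1,2\}$ (the empty sum gives $t=1$, and the single term $-2(-1)^{3}\mathbf{v}_1=2\mathbf{v}_1$ gives $t=2$, together with the $-(-1)^t(1+i)\mathbf{v}_0$ terms).

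For the inductive step, fix $t$ with $2\leqslant t\leqslant r-1$ and look at the solid vertex $v_t$. In $P_{2r+1}^\prime$ its neighbours are exactly $v_{t-1},v_{t-1}^\prime,v_{t+1},v_{t+1}^\prime$, and these four vertices are pairwise non-adjacent (in this strip $v_j$ is never adjacent to $v_j^\prime$, and $v_{t-1}$ is not adjacent to $v_{t+1}$), so their Gram vectors are pairwise orthogonal, each of squared length $2$, with inner product of modulus $1$ against $\mathbf{v}_t$. Hence Lemma~\ref{lem:adeqvec} applies; reading the edge-weights off the figure (all equal to $\pm1$ in the ladder part, with the periodic sign pattern shown there) yields
\[
	2\mathbf{v}_t = \mathbf{v}_{t-1} - \mathbf{v}_{t-1}^\prime + \mathbf{v}_{t+1} + \mathbf{v}_{t+1}^\prime ,
\]
equivalently $\mathbf{v}_{t+1}^\prime = 2\mathbf{v}_t - \mathbf{v}_{t-1} + \mathbf{v}_{t-1}^\prime - \mathbf{v}_{t+1}$. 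Then I would substitute the inductive hypothesis for $\mathbf{v}_{t-1}^\prime$ and collect terms: the $\mathbf{v}_0$-coefficient becomes $-(-1)^{t-1}(1+i)=-(-1)^{t+1}(1+i)$, the $\mathbf{v}_t$-coefficient becomes $2=-2(-1)^{2t+1}$, the $\mathbf{v}_{t-1}$-coefficient becomes $-2=-2(-1)^{2t}$, and every $\mathbf{v}_j$ with $1\leqslant j\leqslant t-2$ keeps its coefficient $-2(-1)^{t-1+j}=-2(-1)^{t+1+j}$. This is exactly the claimed expression for $\mathbf{v}_{t+1}^\prime$, and since the recurrence generates $\mathbf{v}_3^\prime,\dots,\mathbf{v}_r^\prime$ from the two base cases, the induction closes.

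I do not expect a real obstacle. The only slightly delicate points are verifying directly from the figure that the four signs in $2\mathbf{v}_t = \mathbf{v}_{t-1} - \mathbf{v}_{t-1}^\prime + \mathbf{v}_{t+1} + \mathbf{v}_{t+1}^\prime$ are the same for every $t$ with $2\leqslant t\leqslant r-1$, and keeping the alternating-sum index bookkeeping straight. The irregular endpoint $v_1$ --- which is special because it meets $v_0$ along a norm-$2$ edge, so Lemma~\ref{lem:adeqvec} cannot be invoked there --- causes no trouble, since that behaviour is already packaged into the given base case~\eqref{eqn:forv0weight}.
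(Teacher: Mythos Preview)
Your proposal is correct and follows essentially the same route as the paper: base cases \eqref{eqn:forv0weight} and \eqref{eqn:forv1weight}, then the two-step recurrence $2\mathbf{v}_t=\mathbf{v}_{t-1}-\mathbf{v}_{t-1}^\prime+\mathbf{v}_{t+1}+\mathbf{v}_{t+1}^\prime$ at each interior solid vertex, obtained via Lemma~\ref{lem:adeqvec}. The paper's one-line proof cites Lemma~\ref{lem:maxDeg4} rather than Lemma~\ref{lem:adeqvec} for the inductive step, but this is not a genuine difference in method---Lemma~\ref{lem:maxDeg4} is what guarantees that the four listed neighbours of $v_t$ in the cyclotomic graph $P_{2r+1}^\prime$ are \emph{all} its neighbours, after which the argument of Lemma~\ref{lem:adeqvec} yields the displayed recurrence (exactly as in equation~\eqref{eqn:vkadeqweight} later in the section).
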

\begin{proof}
	By induction using equations \eqref{eqn:forv0weight}, \eqref{eqn:forv1weight}, and Lemma~\ref{lem:maxDeg4}.
\end{proof}

\begin{lemma}[Saturated vertices]\label{lem:saturationweighted}
	Let $G$ be an $\mathcal L_2$-free $\Z[i]$-graph containing $P_{2r+1}$ with $r \geqslant 3$.
	Then, for each vertex $v \in V^\prime_4(P_{2r+1})$, we have $N_G(v) = N^\prime_G(v)$.
	Hence, each vertex in $V^\prime_4(P_{2r+1})$ is $P_{2r+1}^\prime$-saturated in $P_{2r+1}$.
\end{lemma}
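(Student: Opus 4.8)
The plan is to transcribe the proof of Lemma~\ref{lem:saturation} almost verbatim, using Lemma~\ref{lem:gramvectsind2} together with equations~\eqref{eqn:forv0weight} and \eqref{eqn:forv1weight} in place of Lemma~\ref{lem:gramvectsind}, and replacing each appeal to an excluded subgraph from $\mathcal L_1$ by the appropriate excluded subgraph from $\mathcal L_2$: the triangle exclusions $XB_1$--$XB_5$ (together with $YA_4$ and $YA_5$), the non-cyclotomic graphs $XB_6$--$XB_{10}$, and the type-II subgraphs $YB_1$, $YB_2$, $YB_3$. First I would fix Gram vectors for $P_{2r+1}^\prime$. Since $P_{2r+1} \subseteq P_{2r+1}^\prime$, we have $N_G(v) \cap V(P_{2r+1}) = N^\prime_G(v) \cap V(P_{2r+1})$ for every $v \in V(G)$, so it suffices to fix a vertex $w \in V^\prime_4(P_{2r+1}) = \{ v_0, v_1, \dots, v_{r-1} \}$, a vertex $v \in V(G) \setminus V(P_{2r+1})$ adjacent to $w$, and show that $\mathbf v$ is switch-equivalent to the Gram vector of one of the hollow vertices $v_1^\prime, \dots, v_r^\prime$. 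Up to a switching of $v$ we normalise $\inprod{\mathbf v, \mathbf w}$ to a convenient unit, and we note that since the degree of $w$ in $P_{2r+1}$ is strictly less than $4$, Lemma~\ref{lem:maxDeg4} bounds the norm of the edge $vw$.

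Next I would dispatch the base cases $w \in \{ v_0, v_1, v_2 \}$ one at a time. For $w = v_0$ the $(1+i)$-weighted edges at the left end do the work: a norm-$1$ edge from $v$ to $v_0$ would produce a copy of $YB_1$ on $\{ v, v_0, v_1, v_2 \}$ (note that $v$ is not adjacent to $v_1$, by the triangle exclusion), so the edge $v v_0$ has norm $2$; then $v$ is again not adjacent to $v_1$, so \eqref{eqn:forv0weight} gives $\abs{\inprod{\mathbf v, \mathbf v_1^\prime}} = 2 = \norm{\mathbf v}\,\norm{\mathbf v_1^\prime}$, and the Cauchy--Schwarz inequality forces $\mathbf v$ to be switch-equivalent to $\mathbf v_1^\prime$. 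For $w \in \{ v_1, v_2 \}$ the triangle exclusion forces $\mathbf v$ orthogonal to the path-neighbours of $w$, the excluded subgraphs $XB_6$--$XB_{10}$, $YB_2$, $YB_3$ (together with the fact that $v_0$ is already known to be $P_{2r+1}^\prime$-saturated) pin down the remaining inner products of $\mathbf v$ with the nearby vectors, and substituting into \eqref{eqn:forv0weight}, \eqref{eqn:forv1weight}, and the Lemma~\ref{lem:adeqvec} relation $2 \mathbf v_t = \mathbf v_{t-1} - \mathbf v_{t-1}^\prime + \mathbf v_{t+1} + \mathbf v_{t+1}^\prime$ (valid for $2 \leqslant t \leqslant r-1$, where every neighbour of $v_t$ in $P_{2r+1}^\prime$ is joined by a norm-$1$ edge) shows that $\mathbf v$ is switch-equivalent to a hollow vertex. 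For the inductive step, assume $2 < t \leqslant r-1$ and that $v_j$ is $P_{2r+1}^\prime$-saturated in $P_{2r+1}$ for every $j$ with $0 \leqslant j < t$. If $v$ is adjacent to $v_{t-2}$, then by the inductive hypothesis $\mathbf v$ is switch-equivalent to the Gram vector of a hollow vertex adjacent to both $v_{t-2}$ and $v_t$, which can only be $\mathbf v_{t-1}^\prime$. If $v$ is not adjacent to $v_{t-2}$, then $\mathbf v \perp \mathbf v_{t-2}$; the triangle exclusion gives $\mathbf v \perp \mathbf v_{t-1}$ and $\mathbf v \perp \mathbf v_{t+1}$; the inductive hypothesis, together with the fact that no hollow vertex is adjacent to both $v_t$ and a vertex $v_k$ with $k \leqslant t-3$, gives $\mathbf v \perp \mathbf v_j$ for $0 \leqslant j \leqslant t-3$, whence by Lemma~\ref{lem:gramvectsind2} we get $\mathbf v \perp \mathbf v_{t-1}^\prime$; taking the inner product of $\mathbf v$ with $2 \mathbf v_t = \mathbf v_{t-1} - \mathbf v_{t-1}^\prime + \mathbf v_{t+1} + \mathbf v_{t+1}^\prime$ then yields $\inprod{\mathbf v, \mathbf v_{t+1}^\prime} = 2$, so $\mathbf v = \mathbf v_{t+1}^\prime$.

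I expect the main obstacle to be precisely the left-end base cases $w \in \{ v_0, v_1, v_2 \}$: because $P_{2r+1}$ carries the single vertex $v_0$ attached by norm-$2$ edges, in place of the symmetric pair $v_0, v_0^\prime$ occurring in $P_{l,r}$, the local case analysis is genuinely different from that of Lemma~\ref{lem:saturation} and must be steered by the specific excluded subgraphs $XB_6$--$XB_{10}$, $YB_1$, $YB_2$, $YB_3$; in particular one uses that (cf.\ $XB_6$ versus $YB_3$) the sign of the norm-$1$ edge closing a $4$-cycle through a norm-$2$ edge decides cyclotomicity. Once these base cases are settled, the inductive step is a routine transcription of the proof of Lemma~\ref{lem:saturation}. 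Note finally that, unlike in Section~\ref{sec:growZi}, the graph $P_{2r+1}^\prime$ has no left--right symmetry, so the induction must be carried through from left to right only and there is no symmetry shortcut.
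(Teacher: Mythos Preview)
Your approach matches the paper's, and the inductive step is essentially verbatim. Two small corrections, both easy to repair.

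First, the paper gets away with only two base cases, $j=0$ and $j=1$; the case $j=2$ is already absorbed into the induction (for $t=2$, Case~1 uses that $v_0$ is saturated, and in Case~2 the range $0\leqslant k\leqslant t-3$ is empty while $\mathbf v\perp\mathbf v_0,\mathbf v_1$ already gives $\mathbf v\perp\mathbf v_1'$ via~\eqref{eqn:forv0weight}). So your separate treatment of $v_2$ is harmless but unnecessary.

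Second, and more substantively, your $j=0$ argument as written is not quite right: the claim that a norm-$1$ edge $vv_0$ forces a copy of $YB_1$ on $\{v,v_0,v_1,v_2\}$ fails when $v$ is also adjacent to $v_2$. The paper's route is: first use $XB_9$ and $YB_1$ to see that $v$ \emph{must} be adjacent to $v_2$; then $XB_6$ and $YB_3$ rule out $\inprod{\mathbf v,\mathbf v_0}=1$ with $\inprod{\mathbf v,\mathbf v_2}=\pm1$; and since $r\geqslant3$, the case $\inprod{\mathbf v,\mathbf v_0}=1$ with $\inprod{\mathbf v,\mathbf v_2}=\pm1\pm i$ produces $YB_1$ on $\{v,v_2,v_3,\dots\}$. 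Only then is $\inprod{\mathbf v,\mathbf v_0}=1+i$ forced, and your Cauchy--Schwarz finish applies. You clearly saw this coming in your final paragraph; just make the case split explicit.

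One more small omission in the inductive step: in Case~2 you need $\inprod{\mathbf v,\mathbf v_t}$ to have norm~$1$ before the final inner-product computation works, and Lemma~\ref{lem:maxDeg4} alone does not give this (the degree of $v_t$ in $P_{2r+1}$ is $2$). The paper invokes $XB_7$ and $XB_8$ here to exclude a norm-$2$ edge $vv_t$.
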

\begin{proof}
	Fix Gram vectors for $P_{2r+1}^\prime$.
	Consider a vertex $v \in V(G)\backslash V(P_{2r+1})$.
	Suppose that $v$ is adjacent to the vertex $v_j \in V^\prime_4(P_{2r+1})$ for some $j \in \left \{0, \ldots, r-1 \right \}$.
	Without loss of generality, assume either $\inprod{\mathbf{v}, \mathbf{v}_j} = 1$ or $\inprod{\mathbf{v}, \mathbf{v}_j} = 1 + i$.
	
	Suppose first that $j = 0$.
	The exclusion of triangles implies that $\inprod{\mathbf{v}, \mathbf{v}_1} = 0$.
	Since $XB_9$ and $YB_1$ are excluded, $v$ must be adjacent to $v_2$, moreover, the excluded subgraphs $XB_6$ and $YB_3$ preclude the possibility of $\inprod{\mathbf{v}, \mathbf{v}_0} = 1$ while $\inprod{\mathbf{v}, \mathbf{v}_2} = \pm1$.
	And since $r \geqslant 3$, if $\inprod{\mathbf{v}, \mathbf{v}_0} = 1$ while $\inprod{\mathbf{v}, \mathbf{v}_2} = \pm1 \pm i$ then $G$ would contain a subgraph equivalent to $YB_1$.
	Therefore we must have $\inprod{\mathbf{v}, \mathbf{v}_0} = 1+i$.
	By taking the inner product of $\mathbf{v}$ and equation~\eqref{eqn:forv0weight}, we obtain that $\mathbf{v} = \mathbf{v}_1^\prime$.
	
	Second, suppose that $j = 1$.
	Since we have excluded triangles, $\mathbf{v}$ must be orthogonal to both $\mathbf{v}_0$ and $\mathbf{v}_2$ and we must have $\inprod{\mathbf{v}, \mathbf{v}_1} = 1$, otherwise the degree of $v_1$ is greater than $4$.
	Using equation~\eqref{eqn:forv1weight}, we find that $\inprod{\mathbf{v} - \mathbf{v}_2^\prime, \mathbf{v} - \mathbf{v}_2^\prime} = 0$.
	Hence $\mathbf{v} = \mathbf{v}_2^\prime$.
	
	We have that the vertices $v_0$ and $v_1$ are $P_{2r+1}^\prime$-saturated in $P_{2r+1}$.
	We assume that, for $1 < t < r$, each vertex $v_j \in V^\prime_4(P_{2r+1})$ with $0 \leqslant j < t$ is $P_{2r+1}^\prime$-saturated in $P_{2r+1}$.
	It suffices now to show that $v_t$ is $P_{2r+1}^\prime$-saturated in $P_{2r+1}$.
	Suppose a vertex $v \in V(G)\backslash V(P_{2r+1})$ is adjacent to $v_t$.
	We split into cases.
	\paragraph{Case 1} 
	$v$ is adjacent to $v_{t-2}$.
	By our inductive hypothesis, $v_{t-2}$ is $P_{2r+1}^\prime$-saturated in $P_{2r+1}$ and thus $\mathbf{v}$ is switch-equivalent to the Gram vector of some hollow vertex. 
	Moreover, the hollow vertex in question must be adjacent to both $v_t$ and $v_{t-2}$.
	Hence $\mathbf{v}$ is switch-equivalent to $\mathbf{v}_{t-1}^\prime$.		
	
	\paragraph{Case 2} 
	$v$ is not adjacent to $v_{t-2}$.
	Then $\inprod{\mathbf{v}, \mathbf{v}_{t-2}} = 0$.
	Since triangles are excluded, $\mathbf{v}$ is orthogonal to $\mathbf{v}_{t-1}$ and $\mathbf{v}_{t+1}$.
	And we must have $\inprod{\mathbf{v}, \mathbf{v}_t} = 1$ since we have excluded $XB_7$ and $XB_8$.
	Now, our inductive hypothesis says that if $v$ is adjacent to a vertex $v_k \in V^\prime_4(P_{2r+1})$ where $0 \leqslant k < t$ then $\mathbf{v}$ is switch-equivalent to the Gram vector of some hollow vertex. 
	But for $0 \leqslant k \leqslant t - 3$ there is no hollow vertex adjacent to both $v_k$ and $v_t$.
	Therefore $\mathbf{v}$ must be orthogonal to all of $\mathbf{v}_0, \mathbf{v}_1, \ldots, \mathbf{v}_{t-3}$.
	By Lemma~\ref{lem:gramvectsind}, the vector $\mathbf{v}_{t-1}^\prime$ is a linear combination of the Gram vectors $\mathbf{v}_0, \mathbf{v}_1, \ldots, \mathbf{v}_{t-1}$, and hence $\inprod{\mathbf{v}_{t-1}^\prime, \mathbf{v}} = 0$.
	By Lemma~\ref{lem:adeqvec} we can write
	\begin{equation}
		\label{eqn:vkadeqweight}
		2 \mathbf{v}_t = \mathbf{v}_{t-1} - \mathbf{v}_{t-1}^\prime + \mathbf{v}_{t+1} + \mathbf{v}_{t+1}^\prime.
	\end{equation}
	From the inner product of $\mathbf{v}$ and equation~\eqref{eqn:vkadeqweight}, it follows that $\mathbf{v} = \mathbf{v}_{t+1}^\prime$ as required.
\end{proof}

	Let $G$ be an $\mathcal L_2$-free $\Z[i]$-graph containing $P_{2r+1}$ with $r \geqslant 3$.
	By the symmetry of $P_{2r+1}^\prime$, it follows from Lemma~\ref{lem:saturationweighted} that each vertex in $V_4^\prime(\mathcal V_G(P_{2r+1}))$ is $P_{2r+1}^\prime$-saturated in $\mathcal V_G(P_{2r+1})$.

\begin{lemma}\label{lem:unchargedinduct}
	Let $G$ be an $\mathcal L_2$-free $\Z[i]$-graph containing $P_{2r+1}$ with $r \geqslant 3$, where $v_r$ is adjacent to a vertex $v \in V(G) \backslash \mathcal V_G(P_{2r+1})$.
	Then either $G$ is contained in $C_{2(r+1)}$ or $G$ contains $P_{2(r+1)+1}$.
\end{lemma}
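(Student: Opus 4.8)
The plan is to mirror the proof of Lemma~\ref{lem:adjacentboth}, treating $v_r$ as the ``right end'' of $P_{2r+1}$ (the vertex $v_0$, with its doubled edges, already acts as a folded-up end) and splitting on the weight $s:=w(v,v_r)$ of the new edge. First I would pin down $\mathbf v$. Since $r \geqslant 3$, Lemma~\ref{lem:saturationweighted} shows $v_0,\dots,v_{r-1}$ are $P_{2r+1}^\prime$-saturated in $P_{2r+1}$; as $v \notin \mathcal V_G(P_{2r+1})$ is switch-equivalent to no vertex of $P_{2r+1}^\prime$, it can be adjacent to none of $v_0,\dots,v_{r-1}$, so $\mathbf v$ is orthogonal to $\mathbf v_0,\dots,\mathbf v_{r-1}$. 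By Lemma~\ref{lem:gramvectsind2} each $\mathbf v_t^\prime$ with $t<r$ is a linear combination of $\mathbf v_0,\dots,\mathbf v_t$, so $\mathbf v$ is also orthogonal to $\mathbf v_1^\prime,\dots,\mathbf v_{r-1}^\prime$, while $\inprod{\mathbf v,\mathbf v_r^\prime}=-\inprod{\mathbf v,\mathbf v_r}$. Finally $v_r$ has the weight-norm-$1$ neighbour $v_{r-1}$, so Lemma~\ref{lem:maxDeg4} gives $|s|^2\leqslant 3$, hence $|s|^2\in\{1,2\}$.

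If $|s|^2=1$ we are quickly done: switching $v$ we may take $w(v,v_r)=1$, and since $v$ is adjacent to $v_r$ and to none of $v_0,\dots,v_{r-1}$, the subgraph of $G$ induced on $\{v_0,\dots,v_r,v\}$ is $P_{2r+1}$ with a pendant edge of weight $1$ attached at $v_r$, which is exactly $P_{2(r+1)+1}$; thus $G$ contains $P_{2(r+1)+1}$.

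The substantial case is $|s|^2=2$, where I claim $G$ is equivalent to a subgraph of $C_{2(r+1)}$. Switching $v$, take $w(v,v_r)=1+i$, so $\inprod{\mathbf v,\mathbf v_r}=1+i$ and $\inprod{\mathbf v,\mathbf v_r^\prime}=-(1+i)$. As $v_r$ and $v_r^\prime$ are non-adjacent in $P_{2r+1}^\prime$, the vectors $\mathbf v_r,\mathbf v_r^\prime$ are mutually orthogonal and each of squared length $2$; expanding $\mathbf v$ in terms of them and using $\inprod{\mathbf v,\mathbf v}=2$ (as in the proof of Lemma~\ref{lem:adeqvec}) forces $\mathbf v=\tfrac{1+i}{2}(\mathbf v_r-\mathbf v_r^\prime)$. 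Hence $v$ is precisely the vertex completing $P_{2r+1}^\prime$ to $C_{2(r+1)}$, so $P_{2r+1}^\prime\cup\{v\}$ is equivalent to $C_{2(r+1)}$ and in particular cyclotomic. I would then show every vertex of $P_{2r+1}\cup\{v\}$ is $(P_{2r+1}^\prime\cup\{v\})$-saturated in $P_{2r+1}\cup\{v\}$: for $v_0,\dots,v_{r-1}$ this is Lemma~\ref{lem:saturationweighted} and the inheritance of saturation by supergraphs; for $v_r$ and $v$ I would invoke the reflection symmetry of $C_{2(r+1)}$, whose mirrored copy of $P_{2r+1}$ has solid vertices $v,v_r,v_{r-1},\dots,v_1$, is contained in $G$, and, by Lemma~\ref{lem:saturationweighted} applied to it, saturates $v,v_r,v_{r-1},\dots,v_2$ --- together these exhaust $V(P_{2r+1}\cup\{v\})$. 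As after Lemma~\ref{lem:saturationweighted}, symmetry of $C_{2(r+1)}$ then gives saturation of all of $\mathcal V_G(P_{2r+1}\cup\{v\})$, so every vertex of $G$ corresponds to a vertex of $C_{2(r+1)}$; this correspondence is one-to-one since two distinct vertices of $G$ switch-equivalent to a common uncharged vertex would be joined by an edge of norm $4$, impossible in an $\mathcal L_2$-free graph (it would contain $YB_4$). Therefore $G$ is equivalent to a subgraph of $C_{2(r+1)}$.

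The main obstacle is the saturation step in the case $|s|^2=2$, namely showing $v_r$ and $v$ are saturated: one must either transcribe the excluded-subgraph bookkeeping of Lemma~\ref{lem:adjacentboth} to the end-column of $C_{2(r+1)}$, or make the reflection symmetry precise enough to re-apply Lemma~\ref{lem:saturationweighted} to the mirrored copy of $P_{2r+1}$. The remaining steps are routine adaptations of the inductive lemmata already established.
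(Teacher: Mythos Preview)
Your proposal is correct and follows essentially the same route as the paper: establish orthogonality of $\mathbf v$ to $\mathbf v_0,\dots,\mathbf v_{r-1}$ and $\mathbf v_1',\dots,\mathbf v_{r-1}'$ via saturation and Lemma~\ref{lem:gramvectsind2}, deduce $\inprod{\mathbf v,\mathbf v_r'}=-\inprod{\mathbf v,\mathbf v_r}$, and split on the norm of $s$. The only real difference is in the $|s|^2=2$ case, where the paper proves saturation of $v_r$ and $v$ by direct excluded-subgraph bookkeeping (using $XB_7$, $XB_9$, $XB_{10}$, $YB_1$ and the exclusion of triangles to force any new neighbour onto $v_{r-1}$ or $v_{r-2}$), whereas you invoke the reflection symmetry of $C_{2(r+1)}$ to re-apply Lemma~\ref{lem:saturationweighted} to the mirrored copy of $P_{2r+1}$ on $\{v,v_r,\dots,v_1\}$. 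Your symmetry argument is valid---the induced subgraph on those vertices is indeed equivalent to $P_{2r+1}$, and both the original and mirrored $P_{2r+1}'$ have hollow vertices lying in $V(C_{2(r+1)})$---but making it precise (matching the hollow vertices of the mirrored $P_{2r+1}'$ with those of $C_{2(r+1)}$ up to switching) is roughly the same amount of work as the paper's direct check, so neither approach buys much over the other.
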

\begin{proof}
	Without loss of generality, we have either $\inprod{\mathbf{v}, \mathbf{v}_r} = 1$ or $\inprod{\mathbf{v}, \mathbf{v}_r} = 1+i$.
	By Lemma~\ref{lem:gramvectsind2}, for $j \in \left \{1,\ldots, r-1 \right \}$, we can write $\mathbf{v}_j^\prime$ as a linear combination of the Gram vectors $\mathbf{v}_0, \ldots, \mathbf{v}_j$.
	According to Lemma~\ref{lem:saturationweighted}, the vertices $v_0$, \ldots, $v_{r-1}$ are $P_{2r+1}^\prime$-saturated in $P_{2r+1}$. 
	Since $v \not \in \mathcal V_G(P_{2r+1})$, we have $\inprod{\mathbf{v}, \mathbf{v}_j} = 0$ for $j \in \left \{0,\ldots, r-1 \right \}$.
	Therefore, $\mathbf{v}$ is orthogonal to $\mathbf{v}_j^\prime$ for all $j \in \left \{ 1, \ldots, r-1 \right \}$.
	Hence, in particular, $\mathbf{v}$ is orthogonal to $\mathbf{v}_{r-1}$, $\mathbf{v}_{r-2}$, and $\mathbf{v}_{r-2}^\prime$.
	By Lemma~\ref{lem:adeqvec} we have
	\begin{equation}
		\label{eqn:forakk}
		2 \mathbf{v}_{r-1} = \mathbf{v}_{r-2} - \mathbf{v}_{r-2}^\prime + \mathbf{v}_r + \mathbf{v}_r^\prime.
	\end{equation}
	Take the inner product of $\mathbf{v}$ and equation~\eqref{eqn:forakk} to give $\inprod{\mathbf{v}, \mathbf{v}_r} = -\inprod{\mathbf{v}, \mathbf{v}_r^\prime}$.
	
	\paragraph{Case 1} 
	\label{par:case_w_c_b_1_i_}
		$\inprod{\mathbf{v}, \mathbf{v}_{r}} = 1+i$.
		By above, we have $\inprod{\mathbf{v}, \mathbf{v}_{r}^\prime} = -1-i$.
		Hence, the graph $P_{2r+1}^\prime \cup \left \{v\right \}$ is equal to the visibly cyclotomic $\Z[i]$-graph $C_{2(r+1)}$, and hence it too is cyclotomic.
		
		It remains to show that every vertex of $V(P_{2r+1}) \cup \left \{v\right \}$ is $( P_{2r+1}^\prime \cup \left \{v\right \} )$-saturated in $P_{2r+1} \cup \left \{v\right \}$.
		By Lemma~\ref{lem:saturationweighted}, this immediately reduces to showing that both $v$ and $v_r$ are $( P_{2r+1}^\prime \cup \left \{v\right \} )$-saturated in $P_{2r+1} \cup \left \{v\right \}$.
		
		First we treat $v$.
		Suppose a vertex $x \in V(G) \backslash V(P_{2r+1} \cup \left \{ v \right \})$ is adjacent to $v$.
		The exclusion of triangles and the excluded subgraphs $XB_9$ and $YB_1$ force $x$ to be adjacent to the vertex $v_{r-1}$ which is $( P_{2r+1}^\prime \cup \left \{v\right \} )$-saturated in $P_{2r+1} \cup \left \{v\right \}$.
		Therefore $\mathbf{x}$ is switch-equivalent to $\mathbf{v}_{r}^\prime$.
		
		It remains to show that $v_r$ is $( P_{2r+1}^\prime \cup \left \{v\right \} )$-saturated in $P_{2r+1} \cup \left \{v\right \}$.
		Suppose that $x \in V(G) \backslash V(P_{2r+1} \cup \left \{ v \right \})$ is adjacent to $v_r$.
		Since all possible uncharged triangles have been excluded, we have that $\mathbf{x}$ is orthogonal to both $\mathbf{v}_{r-1}$ and $\mathbf{v}$.
		And the excluded subgraphs $XB_7$ and $XB_{10}$ force $x$ to be adjacent to the vertex $v_{r-2}$ which is $( P_{2r+1}^\prime \cup \left \{v\right \} )$-saturated in $P_{2r+1} \cup \left \{v\right \}$.
		Therefore $\mathbf{x}$ is switch-equivalent to $\mathbf{v}_{r-1}^\prime$.
		We have shown that both $v$ and $v_r$ are $( P_{2r+1}^\prime \cup \left \{v\right \} )$-saturated in $P_{2r+1} \cup \left \{v\right \}$.
		
		Since each vertex of $\mathcal V_G(P_{2r+1} \cup \left \{v\right \})$ is $( P_{2r+1}^\prime \cup \left \{v\right \} )$-saturated in $P_{2r+1} \cup \left \{v\right \}$, the vertices of $G$ correspond to vertices of $P_{2r+1}^\prime \cup \left \{v\right \}$.
		This correspondence is one to one, since otherwise, if two vertices $x$ and $y$ of $G$ were both switch-equivalent to the same vertex $z$, then $\abs{\inprod{\mathbf{x}, \mathbf{y}}} = 2$, and $YB_4$ has been excluded.
		Since $P_{2r+1}^\prime \cup \left \{v\right \}$ is equal to $C_{2(r+1)}$, $G$ is equivalent to a subgraph of $C_{2(r+1)}$.
	\paragraph{Case 2} 
	\label{par:case_2}
		$\inprod{\mathbf{v}, \mathbf{v}_r} = 1$.
		By above, we have $\inprod{\mathbf{v}, \mathbf{v}_r^\prime} = -1$.
		We have established a subgraph of $G$ equivalent to $P_{2(r+1)+1}$.
\end{proof}

\subsection{$\mathcal L_2$-free $\Z[i]$-graphs on up to 7 vertices}

Let $G$ be an $\mathcal L_2$-free $\Z[i]$-graph.
If $G$ does not contain an edge with a weight of norm at least $2$ then $G$ has been classified in Theorem~\ref{thm:classununzi}.
Since $G$ is cyclotomic, it cannot be equivalent to a graph containing an edge of norm greater than $4$.
We have excluded $YB_4$ and no element of $\Z[i]$ has norm $3$, so we can assume that $G$ contains an edge of norm $2$.
The growing process is similar to that described in Section~\ref{sec:unchargedS9}, but in this case we can start the process with a weight-$(1+i)$ edge.
From this process, we have exhaustively computed (up to equivalence) all $\mathcal L_2$-free $\Z[i]$-graphs on up to $7$ vertices.
Out of these graphs, each one on $7$ vertices contains a subgraph equivalent to $P_{7}$ ($4$ vertices).
Again, we note that this computation can be done by hand.

From the above computation and by iteratively applying Lemma~\ref{lem:unchargedinduct} we have the following lemma.

\begin{lemma}\label{lem:L2free}
	Let $G$ be an $\mathcal L_2$-free $\Z[i]$-graph having at least one edge-weight of norm $2$.
	Then $G$ is equivalent to a subgraph of $C_{2k}$ for some $k \geqslant 2$.
\end{lemma}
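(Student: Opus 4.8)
The plan is to combine the exhaustive computation of small $\mathcal L_2$-free graphs with a repeated application of Lemma~\ref{lem:unchargedinduct}. First I would pin down the edge-weights of $G$: since $G$ is cyclotomic, Lemma~\ref{lem:maxDeg4} rules out any edge of norm exceeding $4$, the ring $\Z[i]$ has no element of norm $3$, and $YB_4$ (a weight-$2$ edge) lies in $\mathcal L_2$; hence every nonzero edge-weight of $G$ has norm $1$ or $2$, and by hypothesis at least one has norm $2$. After a switching such an edge has weight $1+i$, which is why the growing process can be started from a single $(1+i)$-edge. Running that process up to $G$ and invoking the computed list shows that either $|V(G)| \leqslant 6$, in which case one reads off directly that $G$ is equivalent to a subgraph of $C_4$ or $C_6$; or the $7$-vertex stage of the process (which still contains the original $(1+i)$-edge) contains a subgraph equivalent to $P_7$, and this subgraph persists, so $G$ contains a subgraph equivalent to $P_7 = P_{2\cdot 3 + 1}$.

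So assume $|V(G)| \geqslant 7$ and let $r \geqslant 3$ be maximal such that $G$ contains a subgraph equivalent to $P_{2r+1}$; this is well defined since $G$ is finite. Now apply Lemma~\ref{lem:unchargedinduct} to $P_{2r+1}$. If the end-vertex $v_r$ has a neighbour $v \in V(G) \setminus \mathcal V_G(P_{2r+1})$, then that lemma gives that either $G$ is contained in $C_{2(r+1)}$, and we are done with $k = r+1 \geqslant 2$, or $G$ contains a subgraph equivalent to $P_{2(r+1)+1}$, contradicting the maximality of $r$.

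It remains to treat the case where $v_r$ has no such neighbour. Here every neighbour of every vertex of $P_{2r+1}$ lies in $\mathcal V_G(P_{2r+1})$: for $v_0,\dots,v_{r-1}$ by Lemma~\ref{lem:saturationweighted}, and for $v_r$ by assumption. Using the up/down symmetry $v_j \leftrightarrow v_j^\prime$ of $P_{2r+1}^\prime$ together with the remark (following Lemma~\ref{lem:saturationweighted}) that each vertex of $V_4^\prime(\mathcal V_G(P_{2r+1}))$ is $P_{2r+1}^\prime$-saturated in $\mathcal V_G(P_{2r+1})$, the same holds for the remaining vertices of $\mathcal V_G(P_{2r+1})$; as $G$ is connected this forces $G = \mathcal V_G(P_{2r+1})$. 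Then, exactly as in Case~1 of the proof of Lemma~\ref{lem:unchargedinduct}, the vertices of $G$ correspond to switch-equivalence classes of vertices of $P_{2r+1}^\prime$, the correspondence being injective because a repetition would produce a norm-$4$ edge, i.e.\ a copy of $YB_4$; hence $G$ is equivalent to a subgraph of $P_{2r+1}^\prime$, which is contained in $C_{2(r+1)}$.

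The main difficulty is this last case, where $v_r$ has no external neighbour: one must be sure that $G$ cannot leak out of $\mathcal V_G(P_{2r+1})$ through a vertex other than $v_r$, and this is precisely what the symmetry of $P_{2r+1}^\prime$ and the saturation remark are there to secure. Everything else — the edge-weight bookkeeping, the finite-$|V(G)|$ termination of the induction, and the containment $P_{2r+1}^\prime \subseteq C_{2(r+1)}$ — is routine once Lemmata~\ref{lem:saturationweighted} and \ref{lem:unchargedinduct} are in hand.
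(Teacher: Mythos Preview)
Your argument is the paper's: exhaustive computation up to $7$ vertices to locate a copy of $P_7$, then iterate Lemma~\ref{lem:unchargedinduct} until the process terminates. The paper records this in one line; you have spelled out the termination case, which is helpful.

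One point to tighten. In your final case you claim that ``the same holds for the remaining vertices of $\mathcal V_G(P_{2r+1})$'' by appealing to the $V_4^\prime$-remark after Lemma~\ref{lem:saturationweighted}. That remark only covers vertices of hollow-degree $4$, so it handles vertices switch-equivalent to $v_1^\prime,\dots,v_{r-1}^\prime$ but \emph{not} a vertex $u$ switch-equivalent to $v_r^\prime$, which has hollow-degree $2$. Your symmetry $v_j\leftrightarrow v_j^\prime$ sends the assumption on $v_r$ to a statement about $v_r^\prime$ that you have not actually assumed. The fix is already implicit in your maximality setup: such a $u$ is the endpoint of another copy of $P_{2r+1}$ inside $G$ (replace $v_r$ by $u$), and if $u$ had a neighbour outside $\mathcal V_G(P_{2r+1})$ then Lemma~\ref{lem:unchargedinduct}, applied to that copy, would give either $G\subseteq C_{2(r+1)}$ or a copy of $P_{2(r+1)+1}$, contradicting the maximality of $r$. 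With that adjustment your argument is complete.
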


Together with the computation of the maximal connected cyclotomic $\Z[i]$-graphs containing the excluded subgraphs of type II from the list $\mathcal L_2$ (see Figure~\ref{fig:xgraphs2}), we have proved Theorem~\ref{thm:classunzi}.

	\section{Proof of Theorem~\ref{thm:classchzi}}
	\label{sec:growZicharge}
	
	In this section we prove Theorem~\ref{thm:classchzi}.
	Let $G$ be a cyclotomic $\Z[i]$-graph.
	As in Section~\ref{sec:proof2}, $G$ can have edge-weights coming only from the set $\left \{0,\pm 1, \pm i, \pm 1 \pm i, \pm 2, \pm 2i \right \}$.
	Moreover, since we are actually studying Hermitian matrices, we allow $G$ to contain only rational integer charges, and by Lemma~\ref{lem:maxDeg4}, this immediately restricts the charges to coming from the set $\left \{0,\pm 1, \pm 2 \right \}$. 

	\subsection{Excluded subgraphs}
	
	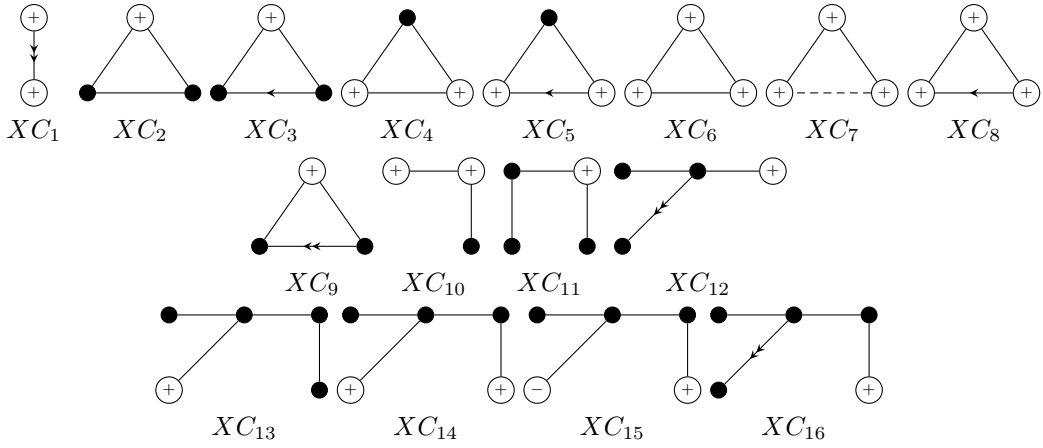
\begin{figure}[h!tbp]
		\centering
		\begin{tikzpicture}
		\begin{scope}[scale=1, auto]
			\foreach \pos/\name/\sign/\charge in {{(0,1)/a/pc/+}, {(0,0)/b/pc/+}}
				\node[\sign] (\name) at \pos {$\charge$}; 
			\node at (0,-0.5) {$XC_1$};
			\foreach \edgetype/\source/ \dest /\weight in {wwedge2/a/b/{}}
			\path[\edgetype] (\source) -- node[weight] {$\weight$} (\dest);
		\end{scope}
		\end{tikzpicture}
		\begin{tikzpicture}
		\begin{scope}[scale=1, auto]
			\foreach \pos/\name/\sign/\charge in {{(0.7,1)/a/pc/+}, {(0,0)/b/zc/{}}, {(1.4,0)/c/zc/{}}}
				\node[\sign] (\name) at \pos {$\charge$}; 
			\node at (0.7,-0.5) {$XC_2$};
			\foreach \edgetype/\source/ \dest /\weight in {pedge/a/b/{}, pedge/c/b/{}, pedge/a/c/{}}
			\path[\edgetype] (\source) -- node[weight] {$\weight$} (\dest);
		\end{scope}
		\end{tikzpicture}
		\begin{tikzpicture}[scale=1, auto]
		\begin{scope}[xshift=6cm, yshift=-0.5cm]	
			\foreach \pos/\name/\sign/\charge in {{(0.7,1)/a/pc/+}, {(0,0)/b/zc/{}}, {(1.4,0)/c/zc/{}}}
				\node[\sign] (\name) at \pos {$\charge$}; 
			\node at (0.7,-0.5) {$XC_3$};
			\foreach \edgetype/\source/ \dest /\weight in {pedge/a/b/{}, wedge/c/b/{}, pedge/a/c/{}}
			\path[\edgetype] (\source) -- node[weight] {$\weight$} (\dest);
		\end{scope}
		\end{tikzpicture}
		\begin{tikzpicture}
		\begin{scope}[scale=1, auto]	
			\foreach \pos/\name/\sign/\charge in {{(0.7,1)/a/zc/{}}, {(0,0)/b/pc/+}, {(1.4,0)/c/pc/+}}
				\node[\sign] (\name) at \pos {$\charge$}; 
			\node at (0.7,-0.5) {$XC_4$};
			\foreach \edgetype/\source/ \dest /\weight in {pedge/a/b/{}, pedge/c/b/{}, pedge/a/c/{}}
			\path[\edgetype] (\source) -- node[weight] {$\weight$} (\dest);
		\end{scope}
		\end{tikzpicture}
		\begin{tikzpicture}[scale=1, auto]
		\begin{scope}[xshift=9cm, yshift=-0.5cm]	
			\foreach \pos/\name/\sign/\charge in {{(0.7,1)/a/zc/{}}, {(0,0)/b/pc/+}, {(1.4,0)/c/pc/+}}
				\node[\sign] (\name) at \pos {$\charge$}; 
			\node at (0.7,-0.5) {$XC_5$};
			\foreach \edgetype/\source/ \dest /\weight in {pedge/a/b/{}, wedge/c/b/{}, pedge/a/c/{}}
			\path[\edgetype] (\source) -- node[weight] {$\weight$} (\dest);
		\end{scope}
		\end{tikzpicture}
		\begin{tikzpicture}
		\begin{scope}[scale=1, auto]	
			\foreach \pos/\name/\sign/\charge in {{(0.7,1)/a/pc/+}, {(0,0)/b/pc/+}, {(1.4,0)/c/pc/+}}
				\node[\sign] (\name) at \pos {$\charge$}; 
			\node at (0.7,-0.5) {$XC_6$};
			\foreach \edgetype/\source/ \dest /\weight in {pedge/a/b/{}, pedge/c/b/{}, pedge/a/c/{}}
			\path[\edgetype] (\source) -- node[weight] {$\weight$} (\dest);
		\end{scope}
		\end{tikzpicture}
		\begin{tikzpicture}
		\begin{scope}[scale=1, auto]	
			\foreach \pos/\name/\sign/\charge in {{(0.7,1)/a/pc/+}, {(0,0)/b/pc/+}, {(1.4,0)/c/pc/+}}
				\node[\sign] (\name) at \pos {$\charge$}; 
			\node at (0.7,-0.5) {$XC_7$};
			\foreach \edgetype/\source/ \dest /\weight in {pedge/a/b/{}, nedge/c/b/{}, pedge/a/c/{}}
			\path[\edgetype] (\source) -- node[weight] {$\weight$} (\dest);
		\end{scope}
		\end{tikzpicture}
		\begin{tikzpicture}[scale=1, auto]
		\begin{scope}[xshift=9cm, yshift=-0.5cm]	
			\foreach \pos/\name/\sign/\charge in {{(0.7,1)/a/pc/+}, {(0,0)/b/pc/+}, {(1.4,0)/c/pc/+}}
				\node[\sign] (\name) at \pos {$\charge$}; 
			\node at (0.7,-0.5) {$XC_8$};
			\foreach \edgetype/\source/ \dest /\weight in {pedge/a/b/{}, wedge/c/b/{}, pedge/a/c/{}}
			\path[\edgetype] (\source) -- node[weight] {$\weight$} (\dest);
		\end{scope}
		\end{tikzpicture}
		
		\begin{tikzpicture}[scale=1, auto]
		\begin{scope}[xshift=6cm, yshift=-0.5cm]	
			\foreach \pos/\name/\sign/\charge in {{(0.7,1)/a/pc/+}, {(0,0)/b/zc/{}}, {(1.4,0)/c/zc/{}}}
				\node[\sign] (\name) at \pos {$\charge$}; 
			\node at (0.7,-0.5) {$XC_9$};
			\foreach \edgetype/\source/ \dest /\weight in {pedge/a/b/{}, wwedge/c/b/{}, pedge/a/c/{}}
			\path[\edgetype] (\source) -- node[weight] {$\weight$} (\dest);
		\end{scope}
		\end{tikzpicture}
		\begin{tikzpicture}
		\begin{scope}[scale=1, auto]	
			\foreach \pos/\name/\sign/\charge in {{(0,1)/a/pc/+}, {(1,1)/b/pc/+}, {(1,0)/c/vertex/{}}, {(0,-0.5)/d/empty/{}}}
				\node[\sign] (\name) at \pos {$\charge$}; 
			\node at (0.5,-0.5) {$XC_{10}$};
			\foreach \edgetype/\source/ \dest /\weight in {pedge/a/b/{}, pedge/c/b/{}}
			\path[\edgetype] (\source) -- node[weight] {$\weight$} (\dest);
		\end{scope}
		\end{tikzpicture}
		\begin{tikzpicture}
		\begin{scope}[scale=1, auto]	
			\foreach \pos/\name/\sign/\charge in {{(0,0)/a/vertex/{}}, {(0,1)/b/vertex/{}}, {(1,1)/c/pc/+}, {(1,0)/d/vertex/{}}}
				\node[\sign] (\name) at \pos {$\charge$}; 
			\node at (0.5,-0.5) {$XC_{11}$};
			\foreach \edgetype/\source/ \dest /\weight in {pedge/a/b/{}, pedge/c/b/{}, pedge/c/d/{}}
			\path[\edgetype] (\source) -- node[weight] {$\weight$} (\dest);
		\end{scope}
		\end{tikzpicture}
		\begin{tikzpicture}
			\begin{scope}[scale=1, auto]
				\foreach \pos/\name/\sign/\charge in {{(0,0)/a1/zc/{}}, {(1,1)/b1/zc/{}}, {(2,1)/c1/pc/+}, {(0,1)/c2/zc/{}}}
					\node[\sign] (\name) at \pos {$\charge$};
				\foreach \edgetype/\source/ \dest in {wwedge/b1/a1, pedge/b1/c1, pedge/b1/c2}
					\path[\edgetype] (\source) -- (\dest);
				\node at (1,-0.5) {$XC_{12}$};
			\end{scope}
		\end{tikzpicture}
		
		\begin{tikzpicture}
			\begin{scope}[scale=1, auto]
				\foreach \pos/\name/\sign/\charge in {{(0,0)/a1/pc/+}, {(1,1)/b1/zc/{}}, {(2,1)/c1/zc/{}}, {(2,0)/d1/zc/{}}, {(0,1)/c2/zc/{}}}
					\node[\sign] (\name) at \pos {$\charge$};
				\foreach \edgetype/\source/ \dest in {pedge/a1/b1, pedge/b1/c1, pedge/d1/c1, pedge/b1/c2}
					\path[\edgetype] (\source) -- (\dest);
				\node at (1,-0.5) {$XC_{13}$};
			\end{scope}
		\end{tikzpicture}
		\begin{tikzpicture}
			\begin{scope}[scale=1, auto]
				\foreach \pos/\name/\sign/\charge in {{(0,0)/a1/pc/+}, {(1,1)/b1/zc/{}}, {(2,1)/c1/zc/{}}, {(2,0)/d1/pc/+}, {(0,1)/c2/zc/{}}}
					\node[\sign] (\name) at \pos {$\charge$};
				\foreach \edgetype/\source/ \dest in {pedge/a1/b1, pedge/b1/c1, pedge/d1/c1, pedge/b1/c2}
					\path[\edgetype] (\source) -- (\dest);
				\node at (1,-0.5) {$XC_{14}$};
			\end{scope}
		\end{tikzpicture}
		\begin{tikzpicture}
			\begin{scope}[scale=1, auto]
				\foreach \pos/\name/\sign/\charge in {{(0,0)/a1/pc/-}, {(1,1)/b1/zc/{}}, {(2,1)/c1/zc/{}}, {(2,0)/d1/pc/+}, {(0,1)/c2/zc/{}}}
					\node[\sign] (\name) at \pos {$\charge$};
				\foreach \edgetype/\source/ \dest in {pedge/a1/b1, pedge/b1/c1, pedge/d1/c1, pedge/b1/c2}
					\path[\edgetype] (\source) -- (\dest);
				\node at (1,-0.5) {$XC_{15}$};
			\end{scope}
		\end{tikzpicture}
		\begin{tikzpicture}
			\begin{scope}[scale=1, auto]
				\foreach \pos/\name/\sign/\charge in {{(0,0)/a1/zc/{}}, {(1,1)/b1/zc/{}}, {(2,1)/c1/zc/{}}, {(2,0)/d1/pc/+}, {(0,1)/c2/zc/{}}}
					\node[\sign] (\name) at \pos {$\charge$};
				\foreach \edgetype/\source/ \dest in {wwedge/b1/a1, pedge/b1/c1, pedge/d1/c1, pedge/b1/c2}
					\path[\edgetype] (\source) -- (\dest);
				\node at (1,-0.5) {$XC_{16}$};
			\end{scope}
		\end{tikzpicture}
		\caption{some non-cyclotomic charged $\Z[i]$-graphs.}
		\label{fig:xgraphsCI}
	\end{figure}

	\begin{figure}[h!tbp]
		\centering
		\begin{tikzpicture}
		\begin{scope}[scale=1, auto]
			\foreach \pos/\name/\sign/\charge in {{(0,1)/a/pc/+}, {(0,0)/b/pc/-}}
				\node[\sign] (\name) at \pos {$\charge$}; 
			\node at (0,-0.5) {$YC_1$};
			\foreach \edgetype/\source/ \dest /\weight in {pedge/a/b/{}}
			\path[\edgetype] (\source) -- node[weight] {$\weight$} (\dest);
		\end{scope}
		\end{tikzpicture}
		\begin{tikzpicture}
		\begin{scope}[scale=1, auto]
			\foreach \pos/\name/\sign/\charge in {{(0,1)/a/pc/+}, {(0,0)/b/pc/-}}
				\node[\sign] (\name) at \pos {$\charge$}; 
			\node at (0,-0.5) {$YC_2$};
			\foreach \edgetype/\source/ \dest /\weight in {wwedge2/a/b/{}}
			\path[\edgetype] (\source) -- node[weight] {$\weight$} (\dest);
		\end{scope}
		\end{tikzpicture}
		\begin{tikzpicture}
		\begin{scope}[scale=1, auto]
			\foreach \pos/\name/\sign/\charge in {{(0,1)/a/pc/+}, {(0,0)/b/zc/{}}}
				\node[\sign] (\name) at \pos {$\charge$}; 
			\node at (0,-0.5) {$YC_3$};
			\foreach \edgetype/\source/ \dest /\weight in {wwedge2/a/b/{}}
			\path[\edgetype] (\source) -- node[weight] {$\weight$} (\dest);
		\end{scope}
		\end{tikzpicture}
		\begin{tikzpicture}
		\begin{scope}[scale=1, auto]	
			\foreach \pos/\name/\sign/\charge in {{(0,1)/a/pc/+}, {(1,1)/b/zc/{}}, {(0,0)/c/pc/+}}
				\node[\sign] (\name) at \pos {$\charge$}; 
			\node at (0.5,-0.5) {$YC_4$};
			\foreach \edgetype/\source/ \dest /\weight in {pedge/a/b/{}, pedge/c/b/{}}
			\path[\edgetype] (\source) -- node[weight] {$\weight$} (\dest);
		\end{scope}
		\end{tikzpicture}
		\begin{tikzpicture}
		\begin{scope}[scale=1, auto]	
			\foreach \pos/\name/\sign/\charge in {{(0,1)/a/pc/+}, {(1,1)/b/zc/{}}, {(0,0)/c/pc/-}}
				\node[\sign] (\name) at \pos {$\charge$}; 
			\node at (0.5,-0.5) {$YC_5$};
			\foreach \edgetype/\source/ \dest /\weight in {pedge/a/b/{}, pedge/c/b/{}}
			\path[\edgetype] (\source) -- node[weight] {$\weight$} (\dest);
		\end{scope}
		\end{tikzpicture}
		\begin{tikzpicture}[scale=1, auto]
		\begin{scope}[xshift=6cm, yshift=-0.5cm]	
			\foreach \pos/\name/\sign/\charge in {{(0.7,1)/a/pc/+}, {(0,0)/b/zc/{}}, {(1.4,0)/c/zc/{}}}
				\node[\sign] (\name) at \pos {$\charge$}; 
			\node at (0.7,-0.5) {$YC_6$};
			\foreach \edgetype/\source/ \dest /\weight in {pedge/a/b/{}, nedge/c/b/{}, pedge/a/c/{}}
			\path[\edgetype] (\source) -- node[weight] {$\weight$} (\dest);
		\end{scope}
		\end{tikzpicture}
		\begin{tikzpicture}[scale=1, auto]
		\begin{scope}[xshift=6cm, yshift=-0.5cm]	
			\foreach \pos/\name/\sign/\charge in {{(0.7,1)/a/pc/+}, {(0,0)/b/zc/{}}, {(1.4,0)/c/zc/{}}}
				\node[\sign] (\name) at \pos {$\charge$}; 
			\node at (0.7,-0.5) {$YC_7$};
			\foreach \edgetype/\source/ \dest /\weight in {pedge/a/b/{}, wwnedge/c/b/{}, pedge/a/c/{}}
			\path[\edgetype] (\source) -- node[weight] {$\weight$} (\dest);
		\end{scope}
		\end{tikzpicture}
		\begin{tikzpicture}[scale=1, auto]
		\begin{scope}	
			\foreach \pos/\name/\type/\charge in {{(0,0)/a/pc/{2}}}
				\node[\type] (\name) at \pos {$\charge$}; 
			\node at (0,-0.8) {$YC_8$};
		\end{scope}
		\end{tikzpicture}
		\caption{some charged cyclotomic $\Z[i]$-graphs that are contained as subgraphs of fixed maximal connected cyclotomic $\Z[i]$-graphs.}
		\label{fig:xgraphsCII}
	\end{figure}
	\begin{table}[ht]
		\begin{center}
		\begin{tabular}{c|c}
			Excluded subgraph & Maximal cyclotomics \\
			\hline
			$YC_1$ & $C^{+-}_4$, $S_4$, $S_4^\dag$, $S_7$, $S_8$, and $S_8^\prime$ \\
			$YC_2$ & $S_4$ \\
			$YC_3$ & $C_3$ \\
			$YC_4$ & $C_6^{++}$ and $S_7$\\
			$YC_5$ & $C_6^{+-}$ and $S_8^\prime$ \\
			$YC_6$ & $S_7$ and $S_8^\prime$ \\
			$YC_7$ & $S_4^\dag$ \\
			$YC_8$ & $S_1$
		\end{tabular}
		\end{center}
		\caption{Excluded subgraphs from Figure~\ref{fig:xgraphsCII} and (up to equivalence) their containing maximal connected cyclotomic $\Z[i]$-graphs.}
		\label{tab:exgraphsicharge}
	\end{table}
	
	In Table~\ref{tab:exgraphsicharge} we list each excluded subgraph of type II in Figure~\ref{fig:xgraphsCII} along with every maximal connected cyclotomic $\Z[i]$-graph that contains it.
	Let $\mathcal L_3$ consist of the excluded subgraphs of type II in Figures~\ref{fig:xgraphsi}, \ref{fig:xgraphs2}, and \ref{fig:xgraphsCII}.
	Note that, up to equivalence, there is exactly one charged $\Z[i]$-triangle that \emph{can} be a subgraph of an $\mathcal L_3$-free $\Z[i]$-graph, namely the triangle
	\begin{center}
		\begin{tikzpicture}
		\begin{scope}[scale=1, auto]	
			\foreach \pos/\name/\sign/\charge in {{(0.7,1)/a/zc/{}}, {(0,0)/b/pc/+}, {(1.4,0)/c/pc/+}}
				\node[\sign] (\name) at \pos {$\charge$}; 
			\foreach \edgetype/\source/ \dest /\weight in {pedge/a/b/{}, nedge/c/b/{}, pedge/a/c/{}}
			\path[\edgetype] (\source) -- node[weight] {$\weight$} (\dest);
		\end{scope}
		\end{tikzpicture}.
	\end{center}
	In this section, the notion of a saturated vertex will depend on the list $\mathcal L_3$.
	
	\subsection{Inductive lemmata} 
	\label{sub:inductive_lemmata_charged}
	Define $P_{2r}$ and $P^\prime_{2r}$ with the following $\Z$-graph
	\begin{center}
	\begin{tikzpicture}
		\begin{scope}[auto, scale=1.3]
				\foreach \pos/\name/\sign/\charge in {{(-1,0)/bgn2/ghostc/-}, {(-1,1)/bgn1/pc/-}}
					\node[\sign] (\name) at \pos {$\charge$};
			\foreach \type/\pos/\name in {{ghost/(0,0)/a2}, {vertex/(0,1)/a1}, {vertex/(1,1)/b1}, {ghost/(1,0)/b2}, {empty/(1.6,1)/b11}, {empty/(1.6,0)/b21}, {empty/(1.4,0.6)/b12}, {empty/(1.4,0.4)/b22}, {empty/(2.4,1)/c11}, {empty/(2.4,0)/c21}, {empty/(2.6,0.6)/c12}, {empty/(2.6,0.4)/c22}, {vertex/(3,1)/c1}, {ghost/(3,0)/c2}, {vertex/(4,1)/d1}, {ghost/(4,0)/d2}}
				\node[\type] (\name) at \pos {};
			\foreach \pos/\name in {{(2,0.5)/\dots}, {(-1.3,1.3)/v_1}, {(-1.3,-0.3)/v_1^\prime}, {(0,1.3)/v_2}, {(0,-0.3)/v_2^\prime}, {(1,1.3)/v_3}, {(1,-0.3)/v_3^\prime}, {(3,1.3)/v_{r-1}}, {(3,-0.3)/v_{r-1}^\prime}, {(4,1.3)/v_r}, {(4,-0.3)/v_r^\prime,}}
				\node at \pos {$\name$};
			\foreach \edgetype/\source/ \dest in {pedge/bgn1/a1, pedge/bgn1/a2, nedge/bgn1/bgn2, nedge/bgn2/a1, nedge/bgn2/a2, nedge/b1/a2, pedge/a1/b1, pedge/a1/b2, nedge/a2/b2, nedge/b21/b2, pedge/b1/b11, pedge/b1/b12, nedge/b2/b22, pedge/c11/c1, nedge/c12/c1, pedge/c22/c2, nedge/c21/c2, nedge/d1/c2, pedge/c1/d1, pedge/c1/d2, nedge/c2/d2}
				\path[\edgetype] (\source) -- (\dest);
		\end{scope}
	\end{tikzpicture}	
\end{center}
	where $r \geqslant 1$.
	Here, the set of hollow vertices of $P_{2r}$ is the set $V(P^\prime_{2r}) \backslash V(P_{2r})$.
	Clearly both $P_{2r}$ and $P^\prime_{2r}$ are cyclotomic since they are contained in a $\Z$-graph equivalent to $C^{++}_{2(r+1)}$.
	Having chosen Gram vectors $\mathbf{v}_1, \dots \mathbf{v}_r$, by an argument similar to Lemma~\ref{lem:adeqvec}, we can write
	\begin{align}
		\label{eqn:forv1chargep}
		\mathbf{v}^\prime_1 &= -\mathbf{v}_1; \\
		\label{eqn:forv1charge}
		\mathbf{v}_2^\prime &= - \mathbf{v}_2 + 2 \mathbf{v}_1  ;
	\end{align}
	and
	\begin{equation}
		\label{eqn:forv2charge}
		\mathbf{v}_3^\prime = -\mathbf{v}_3 + 2 \mathbf{v}_2 - 2\mathbf{v}_1.
	\end{equation}
	
	\begin{lemma}\label{lem:gramvectsindcharge}
		In $P_{2r}$ for $r \geqslant 3$, we can write the Gram vector for each hollow vertex in terms of Gram vectors of the vertices as follows:
		\begin{equation*}
			\mathbf{v}^\prime_{t} = -\mathbf{v}_{t} - 2\sum_{j=1}^{t-1} (-1)^{t+j} \mathbf{v}_{j}, \quad \text{ for } t \in \left \{1,\dots, r\right \}.
		\end{equation*}
	\end{lemma}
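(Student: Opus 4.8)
The plan is to prove the identity by induction on $t$, in the same spirit as Lemmata~\ref{lem:gramvectsind} and \ref{lem:gramvectsind2}. After fixing a choice of Gram vectors for $P^\prime_{2r}$, the base cases $t = 1, 2, 3$ are immediate: the claimed formula specialises to $\mathbf{v}^\prime_1 = -\mathbf{v}_1$, to $\mathbf{v}^\prime_2 = -\mathbf{v}_2 + 2\mathbf{v}_1$, and to $\mathbf{v}^\prime_3 = -\mathbf{v}_3 + 2\mathbf{v}_2 - 2\mathbf{v}_1$, which are precisely equations~\eqref{eqn:forv1chargep}, \eqref{eqn:forv1charge}, and \eqref{eqn:forv2charge}; these make sense because $r \geqslant 3$.

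For the inductive step I would fix $t$ with $4 \leqslant t \leqslant r$ and assume the formula for all smaller indices. Since $t - 1 \geqslant 3$, the vertex $v_{t-1}$ is uncharged, and its four neighbours in $P^\prime_{2r}$, namely $v_{t-2}$, $v^\prime_{t-2}$, $v_t$, and $v^\prime_t$, are all uncharged; reading off the ladder structure of the $\Z$-graph defining $P_{2r}$ one sees that no two of these four are adjacent, so their Gram vectors are pairwise orthogonal of squared length $2$, while the four edge-weights at $v_{t-1}$ are units, so each inner product with $\mathbf{v}_{t-1}$ has modulus $1$. Lemma~\ref{lem:adeqvec} therefore applies at $v_{t-1}$ and, after normalising the signs from the edge-weights exactly as in the analogous relations \eqref{eqn:v2adeq} and \eqref{eqn:vkadeqweight}, yields
\[
    2\mathbf{v}_{t-1} = \mathbf{v}_{t-2} - \mathbf{v}^\prime_{t-2} + \mathbf{v}_t + \mathbf{v}^\prime_t .
\]
Solving this for $\mathbf{v}^\prime_t$, substituting the inductive hypothesis for $\mathbf{v}^\prime_{t-2}$, and using $(-1)^{t+(t-1)} = -1$ and $(-1)^{t+(t-2)} = 1$ to fold the $2\mathbf{v}_{t-1}$ and $-2\mathbf{v}_{t-2}$ terms into the sum gives $\mathbf{v}^\prime_t = -\mathbf{v}_t - 2\sum_{j=1}^{t-1}(-1)^{t+j}\mathbf{v}_j$, completing the induction.

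The one point that needs care — and the reason the induction cannot simply be run uniformly from $t = 1$ as in Lemma~\ref{lem:gramvectsind} — is the charged left end: $v_1$ and $v^\prime_1$ are charged and are joined to one another, so Lemma~\ref{lem:adeqvec} is unavailable both at $v_1$ and at $v_2$. This is exactly why equations~\eqref{eqn:forv1chargep}--\eqref{eqn:forv2charge} have to be established separately (by the direct computation that mimics the proof of Lemma~\ref{lem:adeqvec}, using Lemma~\ref{lem:maxDeg4}) and then serve as base cases, the clean recursion via Lemma~\ref{lem:adeqvec} only becoming available from $t \geqslant 4$ onwards, where every vertex involved is uncharged. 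The only remaining work is the routine sign-bookkeeping in the two displayed identities, checked against the defining diagram of $P_{2r}$; I expect this, rather than any conceptual difficulty, to be the extent of the write-up.
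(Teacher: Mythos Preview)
Your proposal is correct and follows essentially the same route as the paper: the paper's proof is the one-line ``By induction using equations~\eqref{eqn:forv1chargep}, \eqref{eqn:forv1charge}, and \eqref{eqn:forv2charge}, and Lemma~\ref{lem:maxDeg4}'', and your write-up simply unpacks that induction, taking the three displayed equations as base cases and running the recursion $2\mathbf{v}_{t-1} = \mathbf{v}_{t-2} - \mathbf{v}'_{t-2} + \mathbf{v}_t + \mathbf{v}'_t$ for $t \geqslant 4$. The only cosmetic difference is that you invoke Lemma~\ref{lem:adeqvec} directly for the inductive step, whereas the paper cites Lemma~\ref{lem:maxDeg4}; these amount to the same thing here, since the degree-$4$ constraint is precisely what forces the residual $\xi$ in the Lemma~\ref{lem:adeqvec} argument to vanish, and your observation that the four neighbours of $v_{t-1}$ are uncharged and pairwise non-adjacent (for $t \geqslant 4$) is exactly the verification needed to make Lemma~\ref{lem:adeqvec} applicable.
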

	\begin{proof}
		By induction using equations \eqref{eqn:forv1chargep}, \eqref{eqn:forv1charge}, and \eqref{eqn:forv2charge}, and Lemma~\ref{lem:maxDeg4}.  
	\end{proof}
	
	\begin{lemma}[Saturated vertices]\label{lem:saturationcharged}
		Let $G$ be an $\mathcal L_3$-free $\Z[i]$-graph containing $P_{2r}$ with $r \geqslant 3$.
		Then, for each vertex $v \in V^\prime_4(P_{2r})$, we have $N_G(v) = N^\prime_G(v)$.
		Hence, each vertex in $V^\prime_4(P_{2r})$ is $P_{2r}^\prime$-saturated in $P_{2r}$.
	\end{lemma}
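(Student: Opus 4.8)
The plan is to follow, almost verbatim, the template used in the proofs of Lemmas~\ref{lem:saturation} and~\ref{lem:saturationweighted}, the only new feature being the charged vertex $v_1$ at the left end of $P_{2r}$. First I would fix Gram vectors $\mathbf{v}_1,\dots,\mathbf{v}_r$ for the solid vertices of $P_{2r}$ and the induced Gram vectors $\mathbf{v}_1^\prime,\dots,\mathbf{v}_r^\prime$ for the hollow vertices of $P_{2r}^\prime$; these are related by equations~\eqref{eqn:forv1chargep}, \eqref{eqn:forv1charge}, \eqref{eqn:forv2charge} and, for $t\geqslant 3$, by Lemma~\ref{lem:gramvectsindcharge}. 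Exactly as before, because $P_{2r}^\prime$ contains $P_{2r}$ we have $N_G(v)\cap V(P_{2r})=N_G^\prime(v)\cap V(P_{2r})$ for every $v\in V(G)$, so it suffices to take a vertex $v\in V(G)\backslash V(P_{2r})$ adjacent to some $v_t\in V_4^\prime(P_{2r})$ and show that $\mathbf{v}$ is switch-equivalent to the Gram vector of a hollow vertex of $P_{2r}^\prime$. This is done by induction on $t$, with base cases $t=1$ and $t=2$.

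For the base case $t=1$ (the charged vertex) I would use the list $\mathcal L_3$ — in particular the charged triangles among $XC_2$–$XC_9$, together with $XC_{10}$–$XC_{16}$, $YC_6$, and $YC_7$ — to determine the admissible pair $\bigl(\inprod{\mathbf{v},\mathbf{v}_1},\inprod{\mathbf{v},\mathbf{v}_2}\bigr)$ up to switching of $v$. The exclusion of triangles controls adjacency of $v$ to $v_2$ and $v_2^\prime$, and the above excluded subgraphs rule out all but one choice of inner products; feeding this choice into equations~\eqref{eqn:forv1chargep} and~\eqref{eqn:forv1charge} forces $\mathbf{v}$ to be switch-equivalent to $\mathbf{v}_1^\prime$ or to $\mathbf{v}_2^\prime$. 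For $t=2$, the exclusion of triangles gives $\mathbf{v}\perp\mathbf{v}_1$ and $\mathbf{v}\perp\mathbf{v}_3$, the degree bound (Lemma~\ref{lem:maxDeg4}) forces $\inprod{\mathbf{v},\mathbf{v}_2}=1$, and then equation~\eqref{eqn:forv1charge} together with Lemma~\ref{lem:adeqvec} applied at $v_2$ (giving $2\mathbf{v}_2=\mathbf{v}_1-\mathbf{v}_1^\prime+\mathbf{v}_3+\mathbf{v}_3^\prime$) yields $\inprod{\mathbf{v}-\mathbf{v}_3^\prime,\mathbf{v}-\mathbf{v}_3^\prime}=0$, hence $\mathbf{v}=\mathbf{v}_3^\prime$. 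So $v_1$ and $v_2$ are $P_{2r}^\prime$-saturated in $P_{2r}$.

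For the inductive step I would fix $2<t<r$, assume every $v_j$ with $1\leqslant j<t$ is $P_{2r}^\prime$-saturated in $P_{2r}$, and take $v\in V(G)\backslash V(P_{2r})$ adjacent to $v_t$. If $v$ is also adjacent to $v_{t-2}$, then by the inductive hypothesis $\mathbf{v}$ is switch-equivalent to the Gram vector of a hollow vertex, which must be adjacent to both $v_{t-2}$ and $v_t$ and hence to $\mathbf{v}_{t-1}^\prime$. If $v$ is not adjacent to $v_{t-2}$, then $\mathbf{v}\perp\mathbf{v}_{t-2}$; the exclusion of triangles gives $\mathbf{v}\perp\mathbf{v}_{t-1}$ and $\mathbf{v}\perp\mathbf{v}_{t+1}$; the degree bound gives $\inprod{\mathbf{v},\mathbf{v}_t}=1$; and the inductive hypothesis, together with the fact that no hollow vertex is adjacent to both $v_k$ and $v_t$ for $k\leqslant t-3$, forces $\mathbf{v}\perp\mathbf{v}_1,\dots,\mathbf{v}_{t-3}$. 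By Lemma~\ref{lem:gramvectsindcharge}, $\mathbf{v}_{t-1}^\prime$ is a linear combination of $\mathbf{v}_1,\dots,\mathbf{v}_{t-1}$, so $\inprod{\mathbf{v}_{t-1}^\prime,\mathbf{v}}=0$; and Lemma~\ref{lem:adeqvec} at $v_t$ gives $2\mathbf{v}_t=\mathbf{v}_{t-1}-\mathbf{v}_{t-1}^\prime+\mathbf{v}_{t+1}+\mathbf{v}_{t+1}^\prime$, whose inner product with $\mathbf{v}$ yields $\mathbf{v}=\mathbf{v}_{t+1}^\prime$. This completes the induction and hence the proof.

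The main obstacle is the base case $t=1$ at the charged vertex. Because $v_1$ carries a charge, its Gram vector does not have squared length $2$, the clean triangle-exclusion reasoning used in the uncharged cases no longer applies verbatim (the list $\mathcal L_3$ admits a charged triangle), and one must carry out a somewhat delicate case analysis — including the possibility that the new neighbour $v$ is itself charged — to pin down exactly which values of $\inprod{\mathbf{v},\mathbf{v}_1}$ and $\inprod{\mathbf{v},\mathbf{v}_2}$ are compatible with $\mathcal L_3$-freeness before equations~\eqref{eqn:forv1chargep} and~\eqref{eqn:forv1charge} can be invoked. Once this is settled, the case $t=2$ and the inductive step are entirely parallel to the earlier lemmas.
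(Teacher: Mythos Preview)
Your overall architecture matches the paper's proof, and your identification of $t=1$ as the delicate case is correct. However, you have underestimated how far the charged-triangle complication propagates: it is \emph{not} confined to $t=1$.

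The concrete gap is at $t=2$. You assert that ``the exclusion of triangles gives $\mathbf{v}\perp\mathbf{v}_1$'', but this is false. Since $v_1$ carries charge $-1$, a triangle on $v_1,v_2,v$ with $v$ also of charge $-1$ and suitable edge-weights is equivalent to the one admissible charged triangle and is \emph{not} excluded by $\mathcal L_3$. The paper deals with this by splitting the $j=2$ case: if $v$ is adjacent to $v_1$, invoke the already-established saturation of $v_1$ to force $\mathbf{v}$ switch-equivalent to $\mathbf{v}_1^\prime$; only in the sub-case $v\not\sim v_1$ does one argue (via $YC_3,YC_4,YC_5$) that $v$ is uncharged and then proceed as you describe. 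Without this split your argument does not cover the charged-$v$ possibility, and the Gram-vector computation $\inprod{\mathbf{v}-\mathbf{v}_3^\prime,\mathbf{v}-\mathbf{v}_3^\prime}=0$ fails when $\|\mathbf{v}\|^2\neq 2$.

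The same issue recurs in Case~2 of your inductive step: you must explicitly rule out $v$ being charged (the paper uses $XC_{14},XC_{15},YC_3$ when $t=3$ and $XB_7,XC_{13},YC_3$ when $t>3$) before the identity $\mathbf{v}=\mathbf{v}_{t+1}^\prime$ can be deduced. Relatedly, the degree bound of Lemma~\ref{lem:maxDeg4} alone does not force $\inprod{\mathbf{v},\mathbf{v}_t}=1$, since $v_t$ has degree only $2$ in $P_{2r}$ and a weight-$(1+i)$ edge would give degree $4$, not $>4$; the paper instead appeals to specific excluded subgraphs ($XB_1,XB_2,XB_3,XC_{12}$ at $t=2$, and $XC_{16}$ or $XB_7$ at $t\geqslant 3$) to eliminate the $1+i$ possibility.
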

	\begin{proof}
		Fix Gram vectors for $P_{2r}^\prime$.
		Consider a vertex $v \in V(G)\backslash V(P_{2r})$.
		Suppose that $v$ is adjacent to the vertex $v_j \in V^\prime_4(P_{2r})$ for some $j \in \left \{1, \ldots, r-1 \right \}$.
		Without loss of generality, either $\inprod{\mathbf{v}, \mathbf{v}_j} = 1$ or $\inprod{\mathbf{v}, \mathbf{v}_j} = 1 + i$.
		
		Suppose first that $j = 1$.
		If $v$ is charged, then the excluded subgraphs $YC_2$ and $XC_1$ rule out the possibility of $\inprod{\mathbf{v}, \mathbf{v}_1} = 1 + i$, and so we assume $\inprod{\mathbf{v}, \mathbf{v}_1} = 1$.
		Moreover, $YC_1$ forces $v$ to have charge $-1$.
		Therefore the inner product $\inprod{\mathbf{v} + \mathbf{v}_1^\prime,\mathbf{v} + \mathbf{v}_1^\prime}$ is zero and hence $\mathbf{v} = -\mathbf{v}_1^\prime$.
		On the other hand, if $v$ is uncharged, then the excluded subgraph $YC_3$ rules out the possibility of $\inprod{\mathbf{v}, \mathbf{v}_1} = 1 + i$, and so we assume $\inprod{\mathbf{v}, \mathbf{v}_1} = 1$.
		The exclusion of the triangles containing exactly one charged vertex $XC_2$, $XC_3$, and $YC_6$ forces $\mathbf{v}$ to be orthogonal to $\mathbf{v}_2$ and by taking the inner product of $\mathbf{v}$ and equation~\eqref{eqn:forv1charge} we find that $\mathbf{v} = \mathbf{v}_2^\prime$.
		Hence, the vertex $v_1$ is $P_{2r}^\prime$-saturated in $P_{2r}$.
		
		Now suppose that $j=2$.
		If $v$ is adjacent to $v_1$, then, since $v_1$ is $P_{2r}^\prime$-saturated in $P_{2r}$ and $v_1^\prime$ is adjacent to $v_2$, we must have $\mathbf{v}$ switch-equivalent to $\mathbf{v}_1^\prime$.
		Otherwise, if $v$ is not adjacent to $v_1$, the excluded subgraphs $YC_3$, $YC_4$, and $YC_5$ imply that $v$ is uncharged.
		Moreover, $XB_1$, $XB_2$, $XB_3$, and $XC_{12}$ rule out the possibility of $\inprod{\mathbf{v}, \mathbf{v}_2} = 1 + i$, so we assume $\inprod{\mathbf{v}, \mathbf{v}_2} = 1$.
		The exclusion of uncharged triangles $YA_4$ and $YA_5$ forces $\inprod{\mathbf{v}, \mathbf{v}_3} = 0$ and using equation~\eqref{eqn:forv2charge}, we deduce that $\inprod{\mathbf{v} - \mathbf{v}^\prime_3, \mathbf{v} - \mathbf{v}^\prime_3} = 0$.
		Hence $\mathbf{v} = \mathbf{v}_3^\prime$.
		
		Thus $v_1$ and $v_2$ are $P_{2r}^\prime$-saturated in $P_{2r}$.
		If $r = 3$, we are done, hence let $r > 3$.
		We assume that, for $2 < t < r$, each vertex $v_j \in V^\prime_4(P_{2r})$ with $1 \leqslant j < t$ is $P_{2r}^\prime$-saturated in $P_{2r}$.
		It suffices now to show that $v_t$ is $P_{2r}^\prime$-saturated in $P_{2r}$.
		Suppose a vertex $v \in V(G)\backslash V(P_{2r})$ is adjacent to $v_t$.
		We split into cases.
		\paragraph{Case 1} 
		$v$ is adjacent to $v_{t-2}$.
		By our inductive hypothesis, $v_{t-2}$ is $P_{2r}^\prime$-saturated in $P_{2r}$ and thus $\mathbf{v}$ is switch-equivalent to the Gram vector of some hollow vertex. 
		Moreover, the hollow vertex in question must be adjacent to both $v_t$ and $v_{t-2}$.
		Hence $\mathbf{v}$ is switch-equivalent to $\mathbf{v}_{t-1}^\prime$.		

		\paragraph{Case 2} 
		$v$ is not adjacent to $v_{t-2}$.
		Since we have excluded triangles having at most one charge, $v$ is adjacent to neither $v_{t-1}$ nor $v_{t+1}$.
		If $t=3$ then the excluded subgraphs $XC_{14}$, $XC_{15}$, and $YC_3$ preclude the possibility of $v$ having a charge and the exclusion of $XC_{16}$ means that we can assume $\inprod{ \mathbf{v}, \mathbf{v}_t} = 1$.
		Otherwise, if $t > 3$ then the excluded subgraphs $XB_7$, $XC_{13}$, and $YC_3$ enable us to assume that $v$ is uncharged and $\inprod{ \mathbf{v}, \mathbf{v}_t} = 1$.
		Hence, since $t \geqslant 3$, we can assume that $v$ is uncharged and $\inprod{ \mathbf{v}, \mathbf{v}_t} = 1$.
		If $t = 3$ then, since $\mathbf{v}$ is orthogonal to both $\mathbf{v}_1 = \mathbf{v}_{t-2}$ and $\mathbf{v}_2 = \mathbf{v}_{t-1}$, by Lemma~\ref{lem:gramvectsindcharge}, $\mathbf{v}$ is also orthogonal to $\mathbf{v}_{t-1}^\prime$.
		Suppose that $t > 3$.
		If $v$ were adjacent to $v_j$ for some $j \in \left \{1,\dots,t-3 \right \}$ then, by our inductive hypothesis, $\mathbf{v}$ would be equivalent to the vector corresponding to some hollow vertex adjacent to $v_j$.
		But, since no hollow vertex is adjacent to both $v_t$ and $v_j$ with $j \in \left \{1,\dots,t-3 \right \}$, the Gram vector $\mathbf{v}$ must be orthogonal to $\mathbf{v}_j$ for all $j \in \left \{1,\dots,t-3 \right \}$.
		Therefore, by Lemma~\ref{lem:gramvectsindcharge}, we have $\inprod{\mathbf{v}, \mathbf{v}_{t-1}^\prime} = 0$.
		Appealing to Lemma~\ref{lem:adeqvec}, write
		\begin{equation}
			\label{eqn:vtweightadeq}
			2 \mathbf{v}_t = \mathbf{v}_{t-1} - \mathbf{v}_{t-1}^\prime + \mathbf{v}_{t+1} + \mathbf{v}_{t+1}^\prime.
		\end{equation}   
		By taking the inner product of $\mathbf{v}$ and equation~\eqref{eqn:vtweightadeq} we find that $\mathbf{v} = \mathbf{v}_{t+1}^\prime$.
		As required.
	\end{proof}
	
	Let $G$ be an $\mathcal L_3$-free $\Z[i]$-graph containing $P_{2r}$ with $r \geqslant 3$.
	By the symmetry of $P_{2r}^\prime$, it follows from Lemma~\ref{lem:saturationcharged} that each vertex in $V_4^\prime(\mathcal V_G(P_{2r}))$ is $P_{2r}^\prime$-saturated in $\mathcal V_G(P_{2r})$.
	
	\begin{lemma}\label{lem:othogtohollowscharge}
		Let $G$ be an $\mathcal L_3$-free $\Z[i]$-graph containing $P_{2r}$ with $r\geqslant 3$, where $v_r$ is adjacent to a vertex $v\in V(G)\backslash \mathcal V_G(P_{2r})$.
		Then $\mathbf{v}$ is orthogonal to the vectors $\mathbf{v}_j$, and $\mathbf{v}_j^\prime$, for $j \in \left \{1,\ldots,r-1\right \}$.
	\end{lemma}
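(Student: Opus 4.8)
The plan is to argue exactly as in the earlier orthogonality lemmata, most closely Lemma~\ref{lem:adjacentright} and the first part of the proof of Lemma~\ref{lem:unchargedinduct}: first rule out that $v$ is adjacent to any of the saturated vertices $v_1,\dots,v_{r-1}$, and then transfer this orthogonality to the hollow Gram vectors via the closed forms of Lemma~\ref{lem:gramvectsindcharge}.

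First I would extract the consequence of $v\notin\mathcal V_G(P_{2r})$. Since $v_r\in V(P_{2r})$ and $v$ is adjacent to $v_r$, the vertex $v$ cannot be switch-equivalent to any vertex of $V(P_{2r}^\prime)$; otherwise we would have $v\in N^\prime_G(v_r)\subseteq\mathcal V_G(P_{2r})$. Next I would recall that $v_1,\dots,v_{r-1}$ all lie in $V^\prime_4(P_{2r})$, so Lemma~\ref{lem:saturationcharged} (applicable since $r\geqslant 3$) gives $N_G(v_j)=N^\prime_G(v_j)$ for each $j\in\{1,\dots,r-1\}$; that is, each $v_j$ is $P_{2r}^\prime$-saturated in $P_{2r}$. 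Consequently, if $v$ were adjacent to some such $v_j$, it would lie in $N^\prime_G(v_j)$ and hence be switch-equivalent to a vertex of $V(P_{2r}^\prime)$, contradicting the previous step. Therefore $\inprod{\mathbf v,\mathbf v_j}=w(v,v_j)=0$ for every $j\in\{1,\dots,r-1\}$.

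It remains to handle the hollow vectors. By Lemma~\ref{lem:gramvectsindcharge}, for each $t\in\{1,\dots,r-1\}$ the vector $\mathbf v^\prime_t$ is a fixed $\Z[i]$-linear combination of $\mathbf v_1,\dots,\mathbf v_t$, hence of $\mathbf v_1,\dots,\mathbf v_{r-1}$. Since $\mathbf v$ has just been shown to be orthogonal to all of $\mathbf v_1,\dots,\mathbf v_{r-1}$, linearity of the inner product yields $\inprod{\mathbf v,\mathbf v^\prime_t}=0$ for every $t\in\{1,\dots,r-1\}$, and the lemma follows.

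I do not anticipate a real obstacle here: the substance is carried entirely by Lemmata~\ref{lem:saturationcharged} and \ref{lem:gramvectsindcharge}. The only points requiring attention are the purely structural facts that $\{v_1,\dots,v_{r-1}\}\subseteq V^\prime_4(P_{2r})$ (so that the saturation lemma can be invoked) and that it is precisely adjacency to the \emph{solid} endpoint $v_r$, combined with $v\notin\mathcal V_G(P_{2r})$, that forces $v$ out of the switch-classes of $V(P_{2r}^\prime)$.
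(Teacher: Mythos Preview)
Your proposal is correct and follows essentially the same route as the paper: invoke Lemma~\ref{lem:saturationcharged} to see that $v_1,\dots,v_{r-1}$ are saturated, conclude orthogonality of $\mathbf v$ to each $\mathbf v_j$ from $v\notin\mathcal V_G(P_{2r})$, and then use Lemma~\ref{lem:gramvectsindcharge} to transfer this to the $\mathbf v_j^\prime$. The paper's version is slightly terser in that it omits your first paragraph: one does not need to pass through the observation that $v$ is not switch-equivalent to a vertex of $P_{2r}^\prime$, since $N_G(v_j)=N^\prime_G(v_j)\subseteq\mathcal V_G(P_{2r})$ already gives the contradiction directly.
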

	\begin{proof}
		By Lemma~\ref{lem:saturationcharged}, the vertices $v_1$, \ldots, $v_{r-1}$ are $P_{2r}^\prime$-saturated in $P_{2r}$.
		Since $v \not \in \mathcal V_G(P_{2r})$, the Gram vector $\mathbf{v}$ is orthogonal to $\mathbf{v}_j$ for all $j \in \left \{1,\ldots, r-1 \right \}$.
		For each $j \in \left \{1,\ldots, r-1 \right \}$, we can write $\mathbf{v}_j^\prime$ as a linear combination of the Gram vectors $\mathbf{v}_1, \ldots, \mathbf{v}_j$ as in Lemma~\ref{lem:gramvectsindcharge}.
		Hence we have the result.
	\end{proof}
	
	\begin{lemma}\label{lem:unchargedinductcharge}
		Let $G$ be an $\mathcal L_3$-free $\Z[i]$-graph containing $P_{2r}$ with $r\geqslant 3$ where $v_r$ is adjacent to an uncharged vertex $v\in V(G)\backslash \mathcal V_G(P_{2r})$.
		Then either $G$ is contained in $C_{2r+1}$ or $G$ contains $P_{2(r+1)}$.
	\end{lemma}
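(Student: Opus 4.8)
The plan is to mimic the proof of Lemma~\ref{lem:unchargedinduct}, the analogous statement for the uncharged family $P_{2r+1}$. First I would normalise the edge from $v$ to $v_r$: since $v$ is uncharged, $\mathbf{v}$ has squared length $2$, and since $v_r$ has hollow-degree $2$ inside $P_{2r}^\prime$ while $d_G(v_r)\leqslant 4$ by Lemma~\ref{lem:maxDeg4}, the weight $w(v,v_r)$ has norm at most $2$; as no Gaussian integer has norm $3$, after switching $v$ by a unit we may assume $\inprod{\mathbf{v},\mathbf{v}_r}$ equals either $1$ or $1+i$. Lemma~\ref{lem:othogtohollowscharge} gives $\inprod{\mathbf{v},\mathbf{v}_j}=\inprod{\mathbf{v},\mathbf{v}_j^\prime}=0$ for all $j\in\{1,\ldots,r-1\}$, while Lemma~\ref{lem:gramvectsindcharge} expresses $\mathbf{v}_r^\prime$ as an integer linear combination of $\mathbf{v}_1,\ldots,\mathbf{v}_r$ in which $\mathbf{v}_r$ occurs with coefficient $-1$. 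Taking the inner product of $\mathbf{v}$ with this expression then yields $\inprod{\mathbf{v},\mathbf{v}_r^\prime}=-\inprod{\mathbf{v},\mathbf{v}_r}$.

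If $\inprod{\mathbf{v},\mathbf{v}_r}=1$, then $\inprod{\mathbf{v},\mathbf{v}_r^\prime}=-1$, so among the Gram vectors of $P_{2r}^\prime$ the vector $\mathbf{v}$ meets only $\mathbf{v}_r$ and $\mathbf{v}_r^\prime$. Hence the induced subgraph of $G$ on $V(P_{2r})\cup\{v\}$ is equivalent to $P_{2(r+1)}$, with $v$ playing the role of the new solid vertex $v_{r+1}$; this is the second alternative of the lemma.

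Now suppose $\inprod{\mathbf{v},\mathbf{v}_r}=1+i$, so $\inprod{\mathbf{v},\mathbf{v}_r^\prime}=-1-i$ and $v$ is joined to both $v_r$ and $v_r^\prime$ by edges of norm $2$. Then $\Gamma:=P_{2r}^\prime\cup\{v\}$ is equivalent to the visibly cyclotomic $\Z[i]$-graph $C_{2r+1}$, so in particular it is cyclotomic. As in Lemma~\ref{lem:unchargedinduct}, it suffices to prove that every vertex of $P_{2r}\cup\{v\}$ is $\Gamma$-saturated in $P_{2r}\cup\{v\}$. By Lemma~\ref{lem:saturationcharged} and the stability of saturation under passing to supergraphs, the vertices $v_1,\ldots,v_{r-1}$ are already $\Gamma$-saturated, so only $v$ and $v_r$ remain. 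For $v$: any $x\in V(G)\backslash V(P_{2r}\cup\{v\})$ adjacent to $v$ is, by the exclusion of triangles, not adjacent to $v_r$, and then the excluded subgraphs $XB_9$ and $YB_1$ (together with the charged members of $\mathcal L_3$, such as $YC_3$, which rule out $x$ carrying a charge) force $x\sim v_{r-1}$; since $v_{r-1}$ is $\Gamma$-saturated and $v_r^\prime$ is the only hollow vertex of $\Gamma$ adjacent to both $v$ and $v_{r-1}$, $\mathbf{x}$ is switch-equivalent to $\mathbf{v}_r^\prime$. For $v_r$: any such $x$ adjacent to $v_r$ is adjacent to neither $v_{r-1}$ nor $v$ (exclusion of triangles), and the excluded subgraphs $XB_7$ and $XB_{10}$, with their charged analogues in $\mathcal L_3$, force $x\sim v_{r-2}$; since $v_{r-2}$ is $\Gamma$-saturated and $v_{r-1}^\prime$ is the only hollow vertex adjacent to both $v_r$ and $v_{r-2}$, $\mathbf{x}$ is switch-equivalent to $\mathbf{v}_{r-1}^\prime$.

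Once every vertex of $\mathcal V_G(P_{2r}\cup\{v\})$ has been shown $\Gamma$-saturated, each vertex of $G$ corresponds to a vertex of $\Gamma$, and this correspondence is one-to-one because two vertices of $G$ switch-equivalent to the same vertex would be joined by a weight-$2$ edge, which is excluded ($YB_4$). Hence $G$ is equivalent to a subgraph of $C_{2r+1}$, as required. The step I expect to be the main obstacle is precisely this final saturation argument: one has to verify, using only the excluded subgraphs in $\mathcal L_3$, that a neighbour $x$ of $v$ or of $v_r$ is forced onto exactly the predicted hollow vertex even when $x$ is permitted to carry a charge, which depends on a careful inventory of which charged triangles and short charged paths survive in an $\mathcal L_3$-free $\Z[i]$-graph.
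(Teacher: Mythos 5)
Your proposal follows exactly the route the paper intends: its proof of this lemma is literally ``Similar to the proof of Lemma~\ref{lem:unchargedinduct}'', and your expansion (normalise $w(v,v_r)$ to $1$ or $1+i$, use Lemma~\ref{lem:othogtohollowscharge} and Lemma~\ref{lem:gramvectsindcharge} to get $\inprod{\mathbf{v},\mathbf{v}_r^\prime}=-\inprod{\mathbf{v},\mathbf{v}_r}$, read off $P_{2(r+1)}$ in the weight-$1$ case, and identify $P_{2r}^\prime\cup\{v\}$ with $C_{2r+1}$ and run the saturation argument in the weight-$(1+i)$ case) is the intended adaptation, and its skeleton is sound. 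Two of your citations are not quite enough as stated, though both are patchable by exactly the devices the paper uses in Lemma~\ref{lem:chargedinductcharge}. First, $YC_3$ only forbids a charged $x$ attached to $v$ by a norm-$2$ edge; a charged $x$ joined to $v$ by a weight-$1$ edge survives all the listed $3$-vertex exclusions (the path $x\,{-}\,v\,{-}\,v_r$ with weights $1$, $1+i$ and one charge is cyclotomic), so you must either invoke saturation of $v_{r-1}$ (hollow vertices are uncharged, so a charged $x$ cannot be adjacent to a saturated vertex) or note that the induced $4$-path $x\,{-}\,v\,{-}\,v_r\,{-}\,v_{r-1}$ with edge norms $1,2,1$ and one charge has an eigenvalue exceeding $2$ and is thus automatically excluded. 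Second, your injectivity step cites only $YB_4$, which handles two vertices switch-equivalent to the same \emph{uncharged} class; two charge-$(-1)$ vertices equivalent to the same class meet in a norm-$1$ edge, and here one must argue, as the paper does with $XC_6$, $XC_7$, $XC_8$ (noting $v_1$ and $v_1^\prime$ already share a class), that no three charge-$(-1)$ vertices can be switch-equivalent to the same vertex. With these two repairs, which you yourself flagged as the delicate inventory, your argument is the paper's.
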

	\begin{proof}
		Similar to the proof of Lemma~\ref{lem:unchargedinduct}.
	\end{proof}
	
	\begin{lemma}\label{lem:chargedinductcharge}
		Let $G$ be an $\mathcal L_3$-free $\Z[i]$-graph containing $P_{2r}$ with $r\geqslant 3$ where $v_r$ is adjacent to a charged vertex $v\in V(G)\backslash \mathcal V_G(P_{2r})$.
		Then $G$ is contained in either $C_{2(r+1)}^{++}$ or $C_{2(r+1)}^{+-}$.
	\end{lemma}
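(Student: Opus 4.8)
The plan is to follow the same template as Lemma~\ref{lem:adjacentboth} and Case~1 of Lemma~\ref{lem:unchargedinduct}: show that attaching the charged vertex $v$ forces the whole graph to lie inside one of two explicitly describable visibly cyclotomic graphs. First I would normalize: by switching $v$ we may assume $\inprod{\mathbf{v},\mathbf{v}_r}$ equals one of the allowed values; since $v$ is charged, the excluded triangles $XC_1$ (two $+$ charges joined by a weight-$(1+i)$ edge), $YC_2$, and the analysis from Lemma~\ref{lem:saturationcharged} rule out $\inprod{\mathbf{v},\mathbf{v}_r}=1+i$, so we may take $\inprod{\mathbf{v},\mathbf{v}_r}=1$, and then $YC_1$ forces $v$ to have charge $-1$ (while $v_1$ already has charge $-1$ at the other end of $P_{2r}$, or alternatively $+1$ up to switching — one must check which case arises). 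By Lemma~\ref{lem:othogtohollowscharge}, $\mathbf{v}$ is orthogonal to $\mathbf{v}_j$ and $\mathbf{v}_j^\prime$ for $j\in\{1,\dots,r-1\}$, so in particular $\mathbf{v}$ is orthogonal to $\mathbf{v}_{r-1}$, $\mathbf{v}_{r-2}$, and $\mathbf{v}_{r-2}^\prime$; feeding this into the Lemma~\ref{lem:adeqvec} relation $2\mathbf{v}_{r-1} = \mathbf{v}_{r-2} - \mathbf{v}_{r-2}^\prime + \mathbf{v}_r + \mathbf{v}_r^\prime$ yields $\inprod{\mathbf{v},\mathbf{v}_r^\prime} = -\inprod{\mathbf{v},\mathbf{v}_r} = -1$. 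At this point $P_{2r}^\prime \cup \{v\}$ is a completely determined $\Z$-graph, which one identifies (up to equivalence) with $C_{2(r+1)}^{++}$ or $C_{2(r+1)}^{+-}$ according to the sign configuration at the two charged end-vertices.

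Next I would close the graph off, exactly as in Lemma~\ref{lem:adjacentboth}: show every vertex of $V(P_{2r})\cup\{v\}$ is $(P_{2r}^\prime\cup\{v\})$-saturated in $P_{2r}\cup\{v\}$. By Lemma~\ref{lem:saturationcharged} this reduces to checking $v_r$ and $v$. For a new vertex $x$ adjacent to $v$: triangles with at most one charge are excluded and $YC_1$/$YC_2$/$XC_1$ control charged neighbours, so $x$ must attach to $v_{r-1}$ (using $XB_9$, $YB_1$, and the charged-triangle exclusions), whence — $v_{r-1}$ being saturated — $\mathbf{x}$ is switch-equivalent to $\mathbf{v}_r^\prime$. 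For a new vertex $x$ adjacent to $v_r$: since $v_r$ has degree $4$ already in $P_{2r}^\prime\cup\{v\}$, $\mathbf{x}$ is orthogonal to $\mathbf{v}_{r-1}$ and to $\mathbf{v}$, and $XB_7$, $XB_{10}$ (together with $YC_6$, $YC_7$ to kill charged possibilities) force $x$ adjacent to $v_{r-2}$, so $\mathbf{x}$ is switch-equivalent to $\mathbf{v}_{r-1}^\prime$. Thus every vertex of $\mathcal V_G(P_{2r}\cup\{v\})$ corresponds to a vertex of $P_{2r}^\prime\cup\{v\}$, the correspondence is one-to-one because otherwise some edge-weight would have norm $\geqslant 2$ contradicting $YB_4$, and $G$ is therefore contained in the graph equivalent to $C_{2(r+1)}^{++}$ or $C_{2(r+1)}^{+-}$.

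The main obstacle I expect is bookkeeping the charge configuration at the two ends of $P_{2r}^\prime\cup\{v\}$: $v_1$ sits next to the fixed charged vertex of $P_{2r}$, $v$ is the newly attached charged vertex, and one must carefully track — via the excluded charged triangles $XC_6$, $XC_7$, $XC_8$ and the edge-exclusions $YC_1$, $YC_2$ — exactly which sign each end carries and hence whether the resulting graph is the $C^{++}$ or the $C^{+-}$ family. A secondary subtlety is that one must rule out the surviving charged triangle $(0,+,+)$ appearing in a position that would break the identification; this is handled by noting that such a triangle cannot be attached to a hollow-degree-$4$ vertex without creating one of the forbidden configurations. Everything else is a direct transcription of the arguments already carried out in Lemmata~\ref{lem:adjacentboth} and~\ref{lem:unchargedinduct}.
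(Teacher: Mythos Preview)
Your outline has the right overall shape, but there is a structural gap: you omit the extra hollow vertex $v^\prime$. The target graphs $C_{2(r+1)}^{++}$ and $C_{2(r+1)}^{+-}$ have $2(r+1)=2r+2$ vertices, while $P_{2r}^\prime\cup\{v\}$ has only $2r+1$. In the paper one splits on the charge of $v$ (not on anything at the $v_1$ end): if $v$ has charge $+1$ then $\inprod{\mathbf v,\mathbf v}=3$, so after writing $2\mathbf v=\mathbf v_r-\mathbf v_r^\prime+\xi$ the residual $\xi$ has squared length $8$, and one \emph{defines} $\mathbf v^\prime=\mathbf v-\xi$ to produce the missing vertex; the graph $P_{2r}^\prime\cup\{v,v^\prime\}$ is then $C_{2(r+1)}^{+-}$. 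If $v$ has charge $-1$ then $\xi=0$ and one defines $v^\prime$ analogously (with $\mathbf v^\prime=-\mathbf v$), landing in $C_{2(r+1)}^{++}$. Your claim that ``$YC_1$ forces $v$ to have charge $-1$'' is wrong: $v_r$ is uncharged, so $YC_1$ says nothing here, and both charges genuinely occur.

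This omission breaks your saturation argument. In the charge-$+1$ case, a charged neighbour $x$ of $v$ is \emph{not} forced to attach to $v_{r-1}$; instead $XC_{10}$ forces $x$ adjacent to $v_r$, and after the usual orthogonality computations one finds $\mathbf x=-\mathbf v^\prime$ --- a vector you do not have. Likewise your claim that ``$v_r$ has degree $4$ already in $P_{2r}^\prime\cup\{v\}$'' is false: $v_r$ is adjacent only to $v_{r-1}$, $v_{r-1}^\prime$, and $v$, so its degree is $3$; it reaches degree $4$ only after $v^\prime$ is adjoined. A minor point: the edge-weight $1+i$ between charged $v$ and uncharged $v_r$ is ruled out by $YC_3$, not by $XC_1$ or $YC_2$ (those concern edges between two charged vertices). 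Once you insert $v^\prime$ and run the two cases separately, the rest of your plan --- checking saturation of $v$ and $v_r$, then the one-to-one correspondence via $YB_4$ and the charged-triangle exclusions $XC_6$, $XC_7$, $XC_8$ --- is exactly what the paper does.
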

	\begin{proof}
		Since we have excluded $YC_3$, we can assume that $\inprod{\mathbf{v}, \mathbf{v}_r} = 1$.
		By Lemma~\ref{lem:adeqvec} we have
		\begin{equation}
			\label{eqn:forakcharge}
			2 \mathbf{v}_{r-1} = \mathbf{v}_{r-2} - \mathbf{v}_{r-2}^\prime + \mathbf{v}_r + \mathbf{v}_r^\prime.
		\end{equation}
		By Lemma~\ref{lem:othogtohollowscharge}, $\mathbf{v}$ is orthogonal to $\mathbf{v}_{r-1}$, $\mathbf{v}_{r-2}$, and $\mathbf{v}_{r-2}^\prime$.
		Take the inner product of $\mathbf{v}$ and equation~\eqref{eqn:forakcharge} to give $\inprod{\mathbf{v}, \mathbf{v}_r} = -\inprod{\mathbf{v}, \mathbf{v}_r^\prime}$.
		We split into cases when $v$ has charge $1$ and $-1$ respectively.
	
		\paragraph{Case 1}
			Suppose that $v$ has charge $1$.
			We can write $\mathbf{v}$ in terms of $\mathbf{v}_r$, $\mathbf{v}_r^\prime$, and some vector $\xi$.
			\begin{equation}
				\label{eqn:2forvch2}
				2 \mathbf{v} = \mathbf{v}_r - \mathbf{v}_r^\prime + \xi,
			\end{equation}
			where $\xi$ has length $2\sqrt{2}$ and is orthogonal to both $\mathbf{v}_r$ and $\mathbf{v}_r^\prime$.
			Let $v^\prime$ be a hollow vertex of $P_{2r} \cup \left \{v\right \}$ with Gram vector $\mathbf{v}^\prime = \mathbf{v} - \xi$.
			The $\Z$-graph $P_{2r}^\prime \cup \left \{v, v^\prime \right \}$ is equivalent to the visibly cyclotomic $\Z$-graph $C_{2(r+1)}^{+-}$ and is therefore also cyclotomic.
			It remains to check that both $v$ and $v_r$ are $( P_{2r}^\prime \cup \left \{v, v^\prime \right \} )$-saturated in $P_{2r} \cup \left \{v\right \}$.
			First we treat the vertex $v$.
			Suppose that a vertex $x \in V(G) \backslash V(P_{2r}\cup \left \{v\right \})$ is adjacent to $v$.
			
			Suppose that $x$ is charged.
			The excluded subgraphs $XC_1$ and $YC_2$ rule out the possibility of $\inprod{\mathbf{x}, \mathbf{v}} = 1 + i$, and so we assume $\inprod{\mathbf{x}, \mathbf{v}} = 1$.
			Moreover, $YC_1$ forces $x$ to have charge $1$ and $XC_{10}$ forces $x$ to be adjacent to $v_r$.
			The exclusion of $XC_4$, $XC_5$, and $YC_3$ means that we must have $\inprod{\mathbf{x},\mathbf{v}_r} = -1$.
			Now, if $x$ were adjacent to $v_j$ for some $j \in \left \{1,\dots,r-1 \right \}$ then, since such a vertex $v_j$ is $( P_{2r}^\prime \cup \left \{v, v^\prime \right \} )$-saturated in $P_{2r} \cup \left \{v\right \}$, $x$ would be switch-equivalent to some hollow vertex adjacent to $v_j$.
			Such hollow vertices are uncharged, hence, since $x$ is charged, its Gram vector $\mathbf{x}$ must be orthogonal to $\mathbf{v}_j$ for all $j \in \left \{1,\dots,r-1 \right \}$.
			By Lemma~\ref{lem:saturationcharged}, $\mathbf{x}$ is also orthogonal to $\mathbf{v}_j^\prime$ for all $j \in \left \{1,\dots,r-1 \right \}$.
			In particular, $\mathbf{x}$ is orthogonal to $\mathbf{v}_{r-1}$, $\mathbf{v}_{r-2}$, and $\mathbf{v}_{r-2}^\prime$.
			From taking the inner product of $\mathbf{x}$ and equation~\eqref{eqn:forakcharge} we have $\inprod{\mathbf{x},\mathbf{v}_r^\prime} = -\inprod{\mathbf{x},\mathbf{v}_r}$.
			Hence $\inprod{\mathbf{x},\mathbf{v}_r^\prime} = 1$, and the inner product of $\mathbf{x}$ and equation~\eqref{eqn:2forvch2} yields $\inprod{\mathbf{x},\xi} = 4$.
			Therefore we have $\mathbf{x} = -\mathbf{v}^\prime$.
			
			On the other hand, if $x$ is uncharged, then the excluded subgraph $YC_3$ rules out the possibility of $\inprod{\mathbf{x}, \mathbf{v}} = 1 + i$, and so we assume $\inprod{\mathbf{x}, \mathbf{v}} = 1$.
			And the exclusion of the triangles having exactly one charge forces $\mathbf{x}$ to be orthogonal to $\mathbf{v}_r$.
			Since $XC_{11}$ has been excluded, $x$ must be adjacent to the vertex $v_{r-1}$ which is $( P_{2r}^\prime \cup \left \{v, v^\prime \right \} )$-saturated in $P_{2r} \cup \left \{v\right \}$, and hence $\mathbf{x}$ must be switch-equivalent to $\mathbf{v}_r^\prime$.
			Therefore, we have proved that $v$ is $( P_{2r}^\prime \cup \left \{v, v^\prime \right \} )$-saturated in $P_{2r} \cup \left \{v\right \}$ and it remains to show that $v_r$ is $( P_{2r}^\prime \cup \left \{v, v^\prime \right \} )$-saturated in $P_{2r} \cup \left \{v\right \}$.
			
			Suppose that $x$ is adjacent to $v_r$.
			Since we have excluded $YC_3$, $YC_4$, $YC_5$, $XC_{12}$, $XC_{13}$, and $XC_{15}$, the vertex $x$ must be adjacent to at least one of the vertices $v_{r-1}$ or $v$.
			Both of these vertices are $( P_{2r}^\prime \cup \left \{v, v^\prime \right \} )$-saturated in $P_{2r} \cup \left \{v\right \}$ and hence $x$ is switch-equivalent to some hollow vertex as required.
			
			We have, then, that each vertex of $\mathcal V_G(P_{2r} \cup \left \{v \right \})$ is $( P_{2r}^\prime \cup \left \{v, v^\prime \right \} )$-saturated in $P_{2r} \cup \left \{v\right \}$. 
			Since $\mathcal V_G(P_{2r} \cup \left \{v \right \})$ is $( P_{2r}^\prime \cup \left \{v, v^\prime \right \} )$-saturated in $P_{2r} \cup \left \{v\right \}$, the vertices of $G$ correspond to vertices of $P_{2r}^\prime \cup \left \{v, v^\prime \right \}$.
			Two uncharged vertices cannot be switch-equivalent to the same vertex since $YB_4$ has been excluded as a subgraph.
			Suppose two charged vertices $x$ and $y$ are switch-equivalent to the same hollow vertex.
			They must have the same charge.
			If $x$ and $y$ have charge $1$ then $\abs{\inprod{\mathbf{x},\mathbf{y}}} = 3$ which violates Lemma~\ref{lem:maxDeg4}.
			The vertices $v_1$ and $v_1^\prime$ have charge $-1$ and both are switch-equivalent to the same vertex.
			But since we have excluded $XC_6$, $XC_7$, and $XC_8$, no three vertices of charge $-1$ can be switch-equivalent to the same vertex.
			The graph $P_{2r}^\prime \cup \left \{v, v^\prime \right \}$ is $C_{2(r+1)}^{+-}$ and hence, $G$ is equivalent to a subgraph of $C_{2(r+1)}^{+-}$.
			
			\paragraph{Case 2}
			Suppose that $v$ has charge $-1$.
			Argument is similar to Case~1, but this time $\xi = 0$.
			We deduce that $G$ is contained in a $\Z[i]$-graph equivalent to $C_{2(r+1)}^{++}$.
	\end{proof}

	\subsection{Charged $\mathcal L_3$-free $\Z[i]$-graphs on up to 5 vertices}

	We have exhaustively computed all charged $\mathcal L_3$-free $\Z[i]$-graphs on up to $5$ vertices.
	Out of these graphs, the ones on $5$ vertices contain a subgraph equivalent to $P_{6}$ ($3$ vertices).
	The growing process is similar to that described in Section~\ref{sec:unchargedS9}, but in this case we can start the process with a vertex having charge $-1$.
	As before, this computation can be carried out by hand.
	
	By the above computation and by iteratively applying Lemma~\ref{lem:unchargedinductcharge} and Lemma~\ref{lem:chargedinductcharge} we have the following lemma.
	
	\begin{lemma}\label{lem:L3free}
		Let $G$ be a charged $\mathcal L_3$-free $\Z[i]$-graph.
		Then $G$ is equivalent to a subgraph of one of the maximal cyclotomic $\Z[i]$-graphs $C_{2k}^{++}$, $C_{2k}^{+-}$, or $C_{2k-1}$ for some $k \geqslant 2$.
	\end{lemma}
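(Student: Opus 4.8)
The proof runs by induction on $|V(G)|$, parallel to the proofs of Lemmas~\ref{lem:unL1free} and~\ref{lem:L2free}. The base is the exhaustive (and, as remarked, hand-checkable) enumeration of all charged $\mathcal L_3$-free $\Z[i]$-graphs on at most $5$ vertices: each of these is equivalent to a subgraph of one of the families $C_{2k}^{++}$, $C_{2k}^{+-}$, $C_{2k-1}$, so the conclusion holds for them. It is worth recording here that the sporadic charged maximal graphs $S_1$, $S_4$, $S_4^\dag$, $S_7$, $S_8$, $S_8^\prime$ of Theorem~\ref{thm:classchzi} are \emph{not} $\mathcal L_3$-free, because each of them contains one of the type~II excluded subgraphs of Figure~\ref{fig:xgraphsCII}; this is why those graphs do not appear in the present lemma.

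Now suppose $|V(G)|>5$. Since $G$ is connected and has a charged vertex $c$, a greedy construction produces a connected induced subgraph of $G$ on exactly $5$ vertices containing $c$; this subgraph is charged, cyclotomic and $\mathcal L_3$-free, so by the enumeration it — and therefore $G$ — contains a subgraph equivalent to $P_6 = P_{2\cdot 3}$. Let $r\geq 3$ be maximal such that $G$ contains a subgraph equivalent to $P_{2r}$, and fix such a copy. By Lemma~\ref{lem:saturationcharged} and its symmetric counterpart for $\mathcal V_G(P_{2r})$, each of $v_1,\dots,v_{r-1}$ and each vertex of $G$ switch-equivalent to one of $v_1^\prime,\dots,v_{r-1}^\prime$ is $P_{2r}^\prime$-saturated, so all of its neighbours in $G$ already lie in $\mathcal V_G(P_{2r})$; in particular (since $v_1\in V_4^\prime(P_{2r})$) the charged end of $P_{2r}$ is saturated, and the only vertices of $\mathcal V_G(P_{2r})$ that can have a neighbour outside $\mathcal V_G(P_{2r})$ are $v_r$ and its mirror $v_r^\prime$. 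Since $P_{2r}^\prime$ is equivalent to a subgraph of $C_{2(r+1)}^{++}$ and the map carrying each vertex of $\mathcal V_G(P_{2r})$ to the vertex of $P_{2r}^\prime$ it is switch-equivalent to is one-to-one — two distinct vertices switch-equivalent to a common uncharged vertex would force an edge of norm $2$, excluded as $YB_4$, while by Lemma~\ref{lem:maxDeg4} and the exclusion of $XC_6$, $XC_7$ and $XC_8$ no two vertices of charge $+1$ and no two of charge $-1$ can be switch-equivalent to a common vertex — the subgraph of $G$ induced on $\mathcal V_G(P_{2r})$ is equivalent to a subgraph of $C_{2(r+1)}^{++}$. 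If $V(G)=\mathcal V_G(P_{2r})$ we are done. Otherwise connectedness of $G$ gives a vertex $v\in V(G)\setminus\mathcal V_G(P_{2r})$ adjacent to some vertex of $\mathcal V_G(P_{2r})$, which by the above must be switch-equivalent to $v_r$ or to $v_r^\prime$; re-choosing the copy of $P_{2r}$ (using the symmetry $v_r\leftrightarrow v_r^\prime$) and switching, we may assume $v$ is adjacent to the terminal vertex $v_r$. If $v$ is uncharged, Lemma~\ref{lem:unchargedinductcharge} gives either $G\subseteq C_{2r+1}$ (and we are done) or $G$ contains $P_{2(r+1)}$, contradicting the maximality of $r$; if $v$ is charged, Lemma~\ref{lem:chargedinductcharge} gives $G\subseteq C_{2(r+1)}^{++}$ or $G\subseteq C_{2(r+1)}^{+-}$. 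This completes the induction, which terminates because $r\leq|V(G)|$.

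The step I expect to be the main obstacle is exactly this end-analysis: establishing, when $G$ properly contains the chosen maximal $P_{2r}$, that some vertex outside $\mathcal V_G(P_{2r})$ can — after switching and, if necessary, swapping to the mirror copy of $P_{2r}$ — be taken adjacent to the open terminal vertex $v_r$, so that the growth Lemmas~\ref{lem:unchargedinductcharge} and~\ref{lem:chargedinductcharge} apply. This needs the extended saturation statement to confine all new adjacencies to the two ends of $P_{2r}^\prime$, the one-to-oneness of the correspondence with $P_{2r}^\prime$ (which in turn uses the charge-sensitive exclusions of Figure~\ref{fig:xgraphsCI}, together with Lemma~\ref{lem:maxDeg4}), and a short connectivity argument; everything after that is routine.
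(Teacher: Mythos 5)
Your overall route is the paper's own: the enumeration of charged $\mathcal L_3$-free $\Z[i]$-graphs on at most $5$ vertices, the fact that those on exactly $5$ vertices contain $P_6$, and then repeated application of the saturation and growth lemmas (Lemmas~\ref{lem:saturationcharged}, \ref{lem:unchargedinductcharge} and \ref{lem:chargedinductcharge}); your ``maximal $r$'' formulation is just the paper's ``iteratively applying'' made explicit, and the connective tissue you add (extracting a charged connected $5$-vertex induced subgraph, confining new adjacencies to the two ends via saturation, passing to the mirror copy of $P_{2r}$ ending at the vertex switch-equivalent to $v_r^\prime$) is a faithful filling-in of what the paper leaves unsaid.

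One assertion, however, is false as stated: that no two vertices of charge $-1$ can be switch-equivalent to a common vertex. Two charge $-1$ vertices joined by a unit-weight edge have Gram vectors of squared length $1$ whose inner product has modulus $1$, hence the vectors are parallel; such pairs are not excluded (a $(-,-)$ unit edge is cyclotomic, lies in no graph of $\mathcal L_3$, and occurs, up to equivalence, at the charged ends of $C^{++}_{2k}$, $C^{+-}_{2k}$ and $C_{2k+1}$). Indeed the paper notes in the proof of Lemma~\ref{lem:chargedinductcharge} that $v_1$ and $v_1^\prime$ of $P^\prime_{2r}$ are exactly such a pair, and the exclusions $XC_6$, $XC_7$, $XC_8$ only forbid \emph{three} charge $-1$ vertices in one switching class, not two. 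Consequently, in your branch $V(G)=\mathcal V_G(P_{2r})$ the map to $V(P^\prime_{2r})$ need not be injective (nor is it well defined on charge $-1$ vertices, since $v_1$ and $v_1^\prime$ are switch-equivalent to each other). The conclusion of that step nevertheless survives: by $XC_6$--$XC_8$ each switching class contains at most two charge $-1$ vertices of $G$, while uncharged classes are singletons (by $YB_4$) and charge $+1$ classes are singletons (by Lemma~\ref{lem:maxDeg4}), and $P^\prime_{2r}$ — hence $C^{++}_{2(r+1)}$ — supplies the two distinct slots $v_1$, $v_1^\prime$ for the doubled class; this is precisely the multiplicity argument at the end of Case~1 of the paper's proof of Lemma~\ref{lem:chargedinductcharge}, and it is the repair your injectivity claim needs. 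With that correction the argument goes through and matches the paper's.
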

	
	Together with the computation of the maximal connected cyclotomic $\Z[i]$-graphs containing the graphs from the list $\mathcal L_3$ (see Figure~\ref{fig:xgraphsCII}), we have proved Theorem~\ref{thm:classchzi}.
	
	\section{The Eisenstein integers} 
	\label{sec:the_eisenstein_integers}
	
	The classification of cyclotomic matrices over $\Z[\omega]$ is very similar to the classification over $\Z[i]$.
	In this section we outline the differences that need to be considered for this classification.
	
		\subsection{Uncharged case}

		
		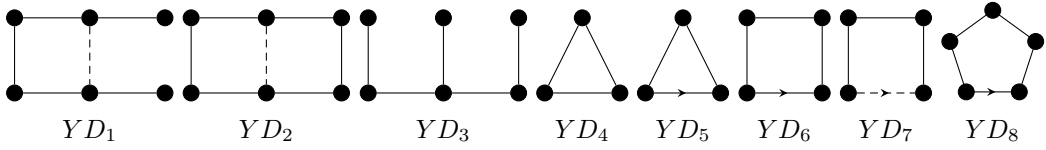
\begin{figure}[h!tbp]
			\centering
			\begin{tikzpicture}
			\begin{scope}[yshift=-0.5cm]	
				\foreach \pos/\name in {{(0,0)/a}, {(0,1)/b}, {(1,1)/c}, {(1,0)/d}, {(2,1)/e}, {(2,0)/f}}
					\node[vertex] (\name) at \pos {}; 
				\node at (1,-0.5) {$YD_1$};
				\foreach \edgetype/\source/ \dest /\weight in {pedge/a/b/{}, pedge/b/c/{}, nedge/c/d/{}, pedge/a/d/{}, pedge/d/f/{}, pedge/c/e/{}}
				\path[\edgetype] (\source) -- node[weight] {$\weight$} (\dest);
			\end{scope}
			\end{tikzpicture}
			\begin{tikzpicture}
			\begin{scope}[xshift=2.5cm, yshift=-0.5cm]	
				\foreach \pos/\name in {{(0,0)/a}, {(0,1)/b}, {(1,1)/c}, {(1,0)/d}, {(2,1)/e}, {(2,0)/f}}
					\node[vertex] (\name) at \pos {}; 
				\node at (1,-0.5) {$YD_2$};
				\foreach \edgetype/\source/ \dest /\weight in {pedge/a/b/{}, pedge/b/c/{}, nedge/c/d/{}, pedge/a/d/{}, pedge/d/f/{}, pedge/c/e/{}, pedge/e/f/{}}
				\path[\edgetype] (\source) -- node[weight] {$\weight$} (\dest);
			\end{scope}
			\end{tikzpicture}
			\begin{tikzpicture}
			\begin{scope}[xshift=2.75cm, yshift=-0.5cm]	
				\foreach \pos/\name in {{(0,0)/a}, {(0,1)/b}, {(1,1)/c}, {(1,0)/d}, {(2,1)/e}, {(2,0)/f}}
					\node[vertex] (\name) at \pos {}; 
				\node at (1,-0.5) {$YD_3$};
				\foreach \edgetype/\source/ \dest /\weight in {pedge/a/b/{}, pedge/c/d/{}, pedge/a/d/{}, pedge/d/f/{}, pedge/e/f/{}}
				\path[\edgetype] (\source) -- node[weight] {$\weight$} (\dest);
			\end{scope}
			\end{tikzpicture}
			\begin{tikzpicture}
			\begin{scope}[xshift=8cm, yshift=-0.5cm]	
				\foreach \pos/\name/\sign/\charge in {{(0.5,1)/a/zc/{}}, {(0,0)/b/zc/{}}, {(1,0)/c/zc/{}}}
					\node[\sign] (\name) at \pos {$\charge$}; 
				\node at (0.5,-0.5) {$YD_4$};
				\foreach \edgetype/\source/ \dest /\weight in {pedge/a/b/{}, pedge/c/b/{}, pedge/a/c/{}}
				\path[\edgetype] (\source) -- node[weight] {$\weight$} (\dest);
			\end{scope}
			\end{tikzpicture}
			\begin{tikzpicture}
			\begin{scope}[xshift=9.5cm, yshift=-0.5cm]	
				\foreach \pos/\name/\sign/\charge in {{(0.5,1)/a/zc/{}}, {(0,0)/b/zc/{}}, {(1,0)/c/zc/{}}}
					\node[\sign] (\name) at \pos {$\charge$}; 
				\node at (0.5,-0.5) {$YD_5$};
				\foreach \edgetype/\source/ \dest /\weight in {pedge/a/b/{}, wedge/b/c/{}, pedge/a/c/{}}
				\path[\edgetype] (\source) -- node[weight] {$\weight$} (\dest);
			\end{scope}
			\end{tikzpicture}
			\begin{tikzpicture}
			\begin{scope}[xshift=5cm, yshift=-0.5cm]		
				\foreach \pos/\name in {{(0,0)/a}, {(0,1)/b}, {(1,1)/c}, {(1,0)/d}}
					\node[vertex] (\name) at \pos {}; 
				\node at (0.5,-0.5) {$YD_6$};
				\foreach \edgetype/\source/ \dest /\weight in {pedge/a/b/{}, pedge/b/c/{}, pedge/c/d/{}, wedge/a/d/{}}
				\path[\edgetype] (\source) -- node[weight] {$\weight$} (\dest);
			\end{scope}
			\end{tikzpicture}
			\begin{tikzpicture}
			\begin{scope}[xshift=6.5cm, yshift=-0.5cm]		
				\foreach \pos/\name in {{(0,0)/a}, {(0,1)/b}, {(1,1)/c}, {(1,0)/d}}
					\node[vertex] (\name) at \pos {}; 
				\node at (0.5,-0.5) {$YD_7$};
				\foreach \edgetype/\source/ \dest /\weight in {pedge/a/b/{}, pedge/b/c/{}, pedge/c/d/{}, wnedge/a/d/{}}
				\path[\edgetype] (\source) -- node[weight] {$\weight$} (\dest);
			\end{scope}
			\end{tikzpicture}
			\begin{tikzpicture}
				\begin{scope}
					\newdimen\rad
					\rad=0.6cm

					\foreach \x in {90,162,234,306,378}
					{
				    	\draw (\x:\rad) node[vertex] {};
				    }
					\foreach \x in {90,162,306,378}
					{
						\draw[pedge] (\x:\rad) -- (\x+72:\rad);
				    }
				\draw[wedge] (234:\rad) -- (306:\rad);
				\end{scope}
				\begin{scope}[xshift=-0.5cm, yshift=-0.5cm]
					\node at (0.5,-0.5) {$YD_8$};
				\end{scope}
			\end{tikzpicture}
			\caption{some cyclotomic $\Z[\omega]$-graphs that are contained as subgraphs of fixed maximal connected cyclotomic $\Z[\omega]$-graphs.}
			\label{fig:xgraphsw}
		\end{figure}
		\begin{table}[htbp]
			\begin{center}
			\begin{tabular}{c|c}
				Excluded subgraph & Maximal cyclotomics \\
				\hline
				$YD_1$ & $S_{12}$, $S_{14}$, and $S_{16}$ \\
				$YD_2$ & $S_{12}$, $S_{14}$, and $S_{16}$ \\
				$YD_3$ & $S_{12}$, $S_{14}$, and $S_{16}$ \\
				$YD_4$ & $S_5$, $T_{6}$, and $S_7$ \\
				$YD_5$ & $T^{(\omega)}_{6}$ \\
				$YD_6$ & $T^{(\omega)}_{8}$ \\
				$YD_7$ & $T^{(\omega)}_{8}$, $S_{10}$, and $S_{12}$ \\
				$YD_8$ & $T^{(\omega)}_{10}$
			\end{tabular}
			\end{center}
			\caption{Excluded subgraphs from Figure~\ref{fig:xgraphsw} and (up to equivalence) their containing maximal connected cyclotomic $\Z[\omega]$-graphs.}
			\label{tab:exgraphsw}
		\end{table}
		
		The uncharged case follows Section~\ref{sec:growZi}.
		In Table~\ref{tab:exgraphsw} we list each excluded subgraph of type II in Figure~\ref{fig:xgraphsw} along with every maximal connected cyclotomic $\Z[\omega]$-graph that contains it.
		Form a list of excluded subgraphs consisting of charged vertices and the excluded subgraphs from Figures~\ref{fig:xgraphsit1} and \ref{fig:xgraphsw}.
		Note that in Figure~\ref{fig:xgraphsw}, the $\Z[\omega]$-graph $YD_4$ is \emph{not} equivalent to $YD_5$ whereas over $\Z[i]$ these two graphs are equivalent.
		We effectively have the same list of excluded subgraphs as we had working over $\Z[i]$.
		The key requisites of the lemmata of Section~\ref{sec:growZi} are the set of excluded subgraphs and that the action of the group of units of $\Z[\omega]$ acts transitively on the set $S \backslash \left \{ 0 \right \}$.
		Let $S$ be the set containing $0$ and the units of $\Z[\omega]$, namely, $S = \left \{0,\pm 1, \pm \omega, \pm \bar \omega \right \}$.
		Then by following Section~\ref{sec:growZi} with this new set $S$ and our list of excluded subgraphs, we obtain a proof of the classification of cyclotomic $S$-graphs.
		
		The only elements of $\Z[\omega]$ of norm greater than $1$ and at most $4$ are the associates of $1+\omega$ or $2$.
		A simple computation confirms that any cyclotomic graph containing a subgraph equivalent to a weight-$(1+\omega)$ edge or a weight-$2$ edge must itself be equivalent to a subgraph of $S_4^\ddag$ or $S_2$ respectively.
		Lemma~\ref{lem:maxDeg4} takes care of the remainder of the elements of $\Z[\omega]$ and we have completed the proof of Theorem~\ref{thm:classunzw}.		
		
		\subsection{Charged case}
		
		In Table~\ref{tab:exgraphswcharge} we list each excluded subgraph of type II in Figure~\ref{fig:xgraphswcharge} along with every maximal connected cyclotomic $\Z[\omega]$-graph that contains it.
		Form a list of excluded subgraphs consisting of the excluded subgraphs from Figures~\ref{fig:xgraphsit1}, \ref{fig:xgraphsw}, \ref{fig:xgraphswCI}, and \ref{fig:xgraphswcharge}.
		Again, there exist charged excluded subgraphs that are not equivalent over $\Z[\omega]$ but are equivalent over $\Z[i]$.
		As in the uncharged case, we have the requisites for the lemmata of  Section~\ref{sec:growZicharge}; using excluded subgraphs and Lemma~\ref{lem:maxDeg4} we can rule out matrices that have an entry of norm greater than $1$.
		We have effectively the same list of excluded subgraphs and, in fact, the argument is simpler in this case, since there are no elements in $\Z[\omega]$ having norm $2$, whereas over $\Z[i]$ we had to consider edge-weights of norm $2$.

		\begin{figure}[htbp]
			\centering
			\begin{tikzpicture}
			\begin{scope}[scale=1, auto]
				\foreach \pos/\name/\sign/\charge in {{(0,1)/a/pc/+}, {(0,0)/b/zc/{}}}
					\node[\sign] (\name) at \pos {$\charge$}; 
				\node at (0,-0.5) {}; 
				\foreach \edgetype/\source/ \dest /\weight in {wwwedge2/a/b/{}}
				\path[\edgetype] (\source) -- node[weight] {$\weight$} (\dest);
			\end{scope}
			\end{tikzpicture}
			\begin{tikzpicture}
			\begin{scope}[scale=1, auto]
				\foreach \pos/\name/\sign/\charge in {{(0,1)/a/pc/+}, {(0,0)/b/pc/+}}
					\node[\sign] (\name) at \pos {$\charge$}; 
				\node at (0,-0.5) {}; 
				\foreach \edgetype/\source/ \dest /\weight in {wwwedge2/a/b/{}}
				\path[\edgetype] (\source) -- node[weight] {$\weight$} (\dest);
			\end{scope}
			\end{tikzpicture}
			\begin{tikzpicture}
			\begin{scope}[scale=1, auto]
				\foreach \pos/\name/\sign/\charge in {{(0.7,1)/a/pc/+}, {(0,0)/b/zc/{}}, {(1.4,0)/c/zc/{}}}
					\node[\sign] (\name) at \pos {$\charge$}; 
				\node at (0.7,-0.5) {}; 
				\foreach \edgetype/\source/ \dest /\weight in {pedge/a/b/{}, pedge/c/b/{}, pedge/a/c/{}}
				\path[\edgetype] (\source) -- node[weight] {$\weight$} (\dest);
			\end{scope}
			\end{tikzpicture}
			\begin{tikzpicture}[scale=1, auto]
			\begin{scope}[xshift=6cm, yshift=-0.5cm]	
				\foreach \pos/\name/\sign/\charge in {{(0.7,1)/a/pc/+}, {(0,0)/b/zc/{}}, {(1.4,0)/c/zc/{}}}
					\node[\sign] (\name) at \pos {$\charge$}; 
				\node at (0.7,-0.5) {}; 
				\foreach \edgetype/\source/ \dest /\weight in {pedge/a/b/{}, wedge/c/b/{}, pedge/a/c/{}}
				\path[\edgetype] (\source) -- node[weight] {$\weight$} (\dest);
			\end{scope}
			\end{tikzpicture}
			\begin{tikzpicture}
			\begin{scope}[scale=1, auto]	
				\foreach \pos/\name/\sign/\charge in {{(0.7,1)/a/zc/{}}, {(0,0)/b/pc/+}, {(1.4,0)/c/pc/+}}
					\node[\sign] (\name) at \pos {$\charge$}; 
				\node at (0.7,-0.5) {}; 
				\foreach \edgetype/\source/ \dest /\weight in {pedge/a/b/{}, pedge/c/b/{}, pedge/a/c/{}}
				\path[\edgetype] (\source) -- node[weight] {$\weight$} (\dest);
			\end{scope}
			\end{tikzpicture}
			\begin{tikzpicture}[scale=1, auto]
			\begin{scope}[xshift=9cm, yshift=-0.5cm]	
				\foreach \pos/\name/\sign/\charge in {{(0.7,1)/a/zc/{}}, {(0,0)/b/pc/+}, {(1.4,0)/c/pc/+}}
					\node[\sign] (\name) at \pos {$\charge$}; 
				\node at (0.7,-0.5) {}; 
				\foreach \edgetype/\source/ \dest /\weight in {pedge/a/b/{}, wedge/c/b/{}, pedge/a/c/{}}
				\path[\edgetype] (\source) -- node[weight] {$\weight$} (\dest);
			\end{scope}
			\end{tikzpicture}
			\begin{tikzpicture}[scale=1, auto]
			\begin{scope}[xshift=12cm, yshift=-0.5cm]	
				\foreach \pos/\name/\sign/\charge in {{(0.7,1)/a/zc/{}}, {(0,0)/b/pc/+}, {(1.4,0)/c/pc/+}}
					\node[\sign] (\name) at \pos {$\charge$}; 
				\node at (0.7,-0.5) {}; 
				\foreach \edgetype/\source/ \dest /\weight in {pedge/a/b/{}, wnedge/c/b/{}, pedge/a/c/{}}
				\path[\edgetype] (\source) -- node[weight] {$\weight$} (\dest);
			\end{scope}
			\end{tikzpicture}
			
			\begin{tikzpicture}
			\begin{scope}[scale=1, auto]	
				\foreach \pos/\name/\sign/\charge in {{(0.7,1)/a/pc/+}, {(0,0)/b/pc/+}, {(1.4,0)/c/pc/+}}
					\node[\sign] (\name) at \pos {$\charge$}; 
				\node at (0.7,-0.5) {}; 
				\foreach \edgetype/\source/ \dest /\weight in {pedge/a/b/{}, pedge/c/b/{}, pedge/a/c/{}}
				\path[\edgetype] (\source) -- node[weight] {$\weight$} (\dest);
			\end{scope}
			\end{tikzpicture}
			\begin{tikzpicture}
			\begin{scope}[scale=1, auto]	
				\foreach \pos/\name/\sign/\charge in {{(0.7,1)/a/pc/+}, {(0,0)/b/pc/+}, {(1.4,0)/c/pc/+}}
					\node[\sign] (\name) at \pos {$\charge$}; 
				\node at (0.7,-0.5) {}; 
				\foreach \edgetype/\source/ \dest /\weight in {pedge/a/b/{}, nedge/c/b/{}, pedge/a/c/{}}
				\path[\edgetype] (\source) -- node[weight] {$\weight$} (\dest);
			\end{scope}
			\end{tikzpicture}
			\begin{tikzpicture}[scale=1, auto]
			\begin{scope}[xshift=9cm, yshift=-0.5cm]	
				\foreach \pos/\name/\sign/\charge in {{(0.7,1)/a/pc/+}, {(0,0)/b/pc/+}, {(1.4,0)/c/pc/+}}
					\node[\sign] (\name) at \pos {$\charge$}; 
				\node at (0.7,-0.5) {}; 
				\foreach \edgetype/\source/ \dest /\weight in {pedge/a/b/{}, wedge/c/b/{}, pedge/a/c/{}}
				\path[\edgetype] (\source) -- node[weight] {$\weight$} (\dest);
			\end{scope}
			\end{tikzpicture}
			\begin{tikzpicture}[scale=1, auto]
			\begin{scope}[xshift=9cm, yshift=-0.5cm]	
				\foreach \pos/\name/\sign/\charge in {{(0.7,1)/a/pc/+}, {(0,0)/b/pc/+}, {(1.4,0)/c/pc/+}}
					\node[\sign] (\name) at \pos {$\charge$}; 
				\node at (0.7,-0.5) {}; 
				\foreach \edgetype/\source/ \dest /\weight in {pedge/a/b/{}, wnedge/c/b/{}, pedge/a/c/{}}
				\path[\edgetype] (\source) -- node[weight] {$\weight$} (\dest);
			\end{scope}
			\end{tikzpicture}
			
			\begin{tikzpicture}
			\begin{scope}[scale=1, auto]	
				\foreach \pos/\name/\sign/\charge in {{(0,1)/a/pc/+}, {(1,1)/b/pc/+}, {(1,0)/c/vertex/{}}}
					\node[\sign] (\name) at \pos {$\charge$}; 
				\foreach \edgetype/\source/ \dest /\weight in {pedge/a/b/{}, pedge/c/b/{}}
				\path[\edgetype] (\source) -- node[weight] {$\weight$} (\dest);
				\node at (1,-0) {}; 
			\end{scope}
			\end{tikzpicture}
			\begin{tikzpicture}
			\begin{scope}[scale=1, auto]	
				\foreach \pos/\name/\sign/\charge in {{(0,0)/a/vertex/{}}, {(0,1)/b/vertex/{}}, {(1,1)/c/pc/+}, {(1,0)/d/vertex/{}}}
					\node[\sign] (\name) at \pos {$\charge$}; 
				\foreach \edgetype/\source/ \dest /\weight in {pedge/a/b/{}, pedge/c/b/{}, pedge/c/d/{}}
				\path[\edgetype] (\source) -- node[weight] {$\weight$} (\dest);
				\node at (1,-0) {}; 
			\end{scope}
			\end{tikzpicture}
			\begin{tikzpicture}
				\begin{scope}[scale=1, auto]
					\foreach \pos/\name/\sign/\charge in {{(0,0)/a1/pc/+}, {(1,1)/b1/zc/{}}, {(2,1)/c1/zc/{}}, {(2,0)/d1/zc/{}}, {(0,1)/c2/zc/{}}}
						\node[\sign] (\name) at \pos {$\charge$};
					\foreach \edgetype/\source/ \dest in {pedge/a1/b1, pedge/b1/c1, pedge/d1/c1, pedge/b1/c2}
						\path[\edgetype] (\source) -- (\dest);
					\node at (1,-0) {}; 
				\end{scope}
			\end{tikzpicture}
			\begin{tikzpicture}
				\begin{scope}[scale=1, auto]
					\foreach \pos/\name/\sign/\charge in {{(0,0)/a1/pc/+}, {(1,1)/b1/zc/{}}, {(2,1)/c1/zc/{}}, {(2,0)/d1/pc/+}, {(0,1)/c2/zc/{}}}
						\node[\sign] (\name) at \pos {$\charge$};
					\foreach \edgetype/\source/ \dest in {pedge/a1/b1, pedge/b1/c1, pedge/d1/c1, pedge/b1/c2}
						\path[\edgetype] (\source) -- (\dest);
					\node at (1,-0) {}; 
				\end{scope}
			\end{tikzpicture}
			\begin{tikzpicture}
				\begin{scope}[scale=1, auto]
					\foreach \pos/\name/\sign/\charge in {{(0,0)/a1/pc/-}, {(1,1)/b1/zc/{}}, {(2,1)/c1/zc/{}}, {(2,0)/d1/pc/+}, {(0,1)/c2/zc/{}}}
						\node[\sign] (\name) at \pos {$\charge$};
					\foreach \edgetype/\source/ \dest in {pedge/a1/b1, pedge/b1/c1, pedge/d1/c1, pedge/b1/c2}
						\path[\edgetype] (\source) -- (\dest);
					\node at (1,-0) {}; 
				\end{scope}
			\end{tikzpicture}
			\caption{some non-cyclotomic charged $\Z[\omega]$-graphs.}
			\label{fig:xgraphswCI}
		\end{figure}
		
		\begin{figure}[h!tbp]
			\centering
			\begin{tikzpicture}
			\begin{scope}[scale=1]
				\foreach \pos/\name/\sign/\charge in {{(0,1)/a/zc/{}}, {(0,0)/b/zc/{}}}
					\node[\sign] (\name) at \pos {$\charge$}; 
				\node at (0,-0.5) {$YE_1$};
				\foreach \edgetype/\source/ \dest /\weight in {pedge/a/b/2}
				\path[\edgetype] (\source) -- node[weight2] {$\weight$} (\dest);
			\end{scope}
			\end{tikzpicture}
			\begin{tikzpicture}
			\begin{scope}[scale=1, auto]
				\foreach \pos/\name/\sign/\charge in {{(0,1)/a/zc/{}}, {(0,0)/b/zc/{}}}
					\node[\sign] (\name) at \pos {$\charge$}; 
				\node at (0,-0.5) {$YE_2$};
				\foreach \edgetype/\source/ \dest /\weight in {wwwedge/a/b/{}}
				\path[\edgetype] (\source) -- node[weight] {$\weight$} (\dest);
			\end{scope}
			\end{tikzpicture}
			\begin{tikzpicture}
			\begin{scope}[scale=1, auto]
				\foreach \pos/\name/\sign/\charge in {{(0,1)/a/pc/+}, {(0,0)/b/pc/-}}
					\node[\sign] (\name) at \pos {$\charge$}; 
				\node at (0,-0.5) {$YE_3$};
				\foreach \edgetype/\source/ \dest /\weight in {pedge/a/b/{}}
				\path[\edgetype] (\source) -- node[weight] {$\weight$} (\dest);
			\end{scope}
			\end{tikzpicture}
			\begin{tikzpicture}
			\begin{scope}[scale=1, auto]
				\foreach \pos/\name/\sign/\charge in {{(0,1)/a/pc/+}, {(0,0)/b/pc/-}}
					\node[\sign] (\name) at \pos {$\charge$}; 
				\node at (0,-0.5) {$YE_4$};
				\foreach \edgetype/\source/ \dest /\weight in {wwwedge2/a/b/{}}
				\path[\edgetype] (\source) -- node[weight] {$\weight$} (\dest);
			\end{scope}
			\end{tikzpicture}
			\begin{tikzpicture}
			\begin{scope}[scale=1, auto]	
				\foreach \pos/\name/\sign/\charge in {{(0,1)/a/pc/+}, {(1,1)/b/zc/{}}, {(0,0)/c/pc/+}}
					\node[\sign] (\name) at \pos {$\charge$}; 
				\node at (0.5,-0.5) {$YE_5$};
				\foreach \edgetype/\source/ \dest /\weight in {pedge/a/b/{}, pedge/c/b/{}}
				\path[\edgetype] (\source) -- node[weight] {$\weight$} (\dest);
			\end{scope}
			\end{tikzpicture}
			\begin{tikzpicture}
			\begin{scope}[scale=1, auto]	
				\foreach \pos/\name/\sign/\charge in {{(0,1)/a/pc/+}, {(1,1)/b/zc/{}}, {(0,0)/c/pc/-}}
					\node[\sign] (\name) at \pos {$\charge$}; 
				\node at (0.5,-0.5) {$YE_6$};
				\foreach \edgetype/\source/ \dest /\weight in {pedge/a/b/{}, pedge/c/b/{}}
				\path[\edgetype] (\source) -- node[weight] {$\weight$} (\dest);
			\end{scope}
			\end{tikzpicture}
			\begin{tikzpicture}[scale=1, auto]
			\begin{scope}[xshift=6cm, yshift=-0.5cm]	
				\foreach \pos/\name/\sign/\charge in {{(0.7,1)/a/pc/+}, {(0,0)/b/zc/{}}, {(1.4,0)/c/zc/{}}}
					\node[\sign] (\name) at \pos {$\charge$}; 
				\node at (0.7,-0.5) {$YE_7$};
				\foreach \edgetype/\source/ \dest /\weight in {pedge/a/b/{}, nedge/c/b/{}, pedge/a/c/{}}
				\path[\edgetype] (\source) -- node[weight] {$\weight$} (\dest);
			\end{scope}
			\end{tikzpicture}
			\begin{tikzpicture}[scale=1, auto]
			\begin{scope}[xshift=6cm, yshift=-0.5cm]	
				\foreach \pos/\name/\sign/\charge in {{(0.7,1)/a/pc/+}, {(0,0)/b/zc/{}}, {(1.4,0)/c/zc/{}}}
					\node[\sign] (\name) at \pos {$\charge$}; 
				\node at (0.7,-0.5) {$YE_8$};
				\foreach \edgetype/\source/ \dest /\weight in {pedge/a/b/{}, wnedge/c/b/{}, pedge/a/c/{}}
				\path[\edgetype] (\source) -- node[weight] {$\weight$} (\dest);
			\end{scope}
			\end{tikzpicture}
			\begin{tikzpicture}[scale=1, auto]
			\begin{scope}	
				\foreach \pos/\name/\type/\charge in {{(0,0)/a/pc/{2}}}
					\node[\type] (\name) at \pos {$\charge$}; 
				\node at (0,-0.8) {$YE_9$};
			\end{scope}
			\end{tikzpicture}
			\caption{some charged cyclotomic $\Z[\omega]$-graphs that are contained as subgraphs of fixed maximal connected cyclotomic $\Z[\omega]$-graphs.}
			\label{fig:xgraphswcharge}
		\end{figure}
		\begin{table}[h!t]
			\begin{center}
			\begin{tabular}{c|c}
				Excluded subgraph & Maximal cyclotomics \\
				\hline
				$YE_1$ & $S_2$ \\
				$YE_2$ & $S_4^\ddag$ \\
				$YE_3$ & $C^{+-}_4$, $S_6$, $S_6^\dag$, $S_7$, $S_8$, and $S_8^\prime$ \\
				$YE_4$ & $S_2^\dag$ \\
				$YE_5$ & $S_5$, $C_6^{++}$, and $S_7$\\
				$YE_6$ & $C_6^{+-}$ and $S_8^\prime$ \\
				$YE_7$ & $S_6^\dag$, $S_7$, and $S_8^\prime$ \\
				$YE_8$ & $S_5$ \\
				$YE_9$ & $S_1$
			\end{tabular}
			\end{center}
			\caption{Excluded subgraphs from Figure~\ref{fig:xgraphswcharge} and (up to equivalence) their containing maximal connected cyclotomic $\Z[\omega]$-graphs.}
			\label{tab:exgraphswcharge}
		\end{table}
	
	
	\section{Other quadratic integer rings}
	\label{sec:extensions}
	
	Finally, we outline how the method used in this paper can be used to classify cyclotomic matrices over other quadratic integer rings; the method of using Gram vectors and excluded subgraphs also goes through.
	In particular, we can offer a simpler proof of Taylor's classification \cite{GTay:cyclos10} of cyclotomic matrices over the imaginary quadratic integer rings $R = \mathcal O_{\Q(\sqrt{d})}$ where $d \ne -1$ and $d \ne -3$.
	We outline the idea of this proof.
	Since cyclotomic $\Z$-graphs have been classified, we can restrict to considering cyclotomic $R$-graphs that contain at least one weight-$\alpha$ edge, where $\alpha \not \in \Z$.
	By Lemma~\ref{lem:maxDeg4}, we need only consider the rings $R$ that contain at least one element $\alpha \not \in \Z$ with norm at most $4$.
	Namely, these are the rings $\mathcal O_{\Q(\sqrt{d})}$ where $d \in \{ -15, -11, -7, -3, -2, -1, 2, 3, 5 \}$.
	In each of the quadratic integer rings where $d \ne -1$ and $d \ne -3$ the only units are $\pm 1$.
	Therefore we can restrict attention to $R$-graphs that contain at least one weight-$\alpha$ edge where $\alpha \in R \backslash \Z$ has norm greater than $1$.
	In fact, for the rings $\mathcal O_{\Q(\sqrt{d})}$ where $d \in \{ -15, -11 \}$, the cyclotomic matrices can be classified by exhaustive computation.
	For the imaginary quadratic integer rings $\mathcal O_{\Q(\sqrt{d})}$ for $d \in \{ -7, -2 \}$, the proof then resembles Sections~\ref{sec:proof2} and \ref{sec:growZicharge}.
	
	We remark that in the real quadratic case, a bit more work needs to be done.
	When $d>1$, it is not necessary that a symmetric $\mathcal O_{\Q(\sqrt{d})}$-matrix will have an integral characteristic polynomial.
	For details of the real quadratic case see the author's later paper \cite{Greaves:CycloRQ11} or his thesis \cite{Greaves:Thesis12}.
	
		\section{Acknowledgement} 
	\label{sec:acknowledgement}
		The author is grateful for the advice and support of James McKee and for the comments of the referee.
	
\bibliographystyle{plain}
\bibliography{../bib}
\end{document}